 \newcommand{\bpm}{\begin{pmatrix}}
 \newcommand{\epm}{\end{pmatrix}}
  \newcommand{\bbm}{\begin{bmatrix}}
 \newcommand{\ebm}{\end{bmatrix}}
\newcommand{\beq}{\begin{equation}}
\newcommand{\eeq}{\end{equation}}
\newcommand{\beqs}{\begin{equation*}}
\newcommand{\eeqs}{\end{equation*}}
\newcommand{\beal}{\begin{align}}
\newcommand{\eeal}{\end{align}}
\newcommand{\beals}{\begin{align*}}
\newcommand{\eeals}{\end{align*}}
\newcommand{\ben}{\begin{eqnarray}}
\newcommand{\een}{\end{eqnarray}}
\newcommand{\beno}{\begin{eqnarray*}}
\newcommand{\eeno}{\end{eqnarray*}}
\renewcommand{\Re}{{\rm Re}\,}
\renewcommand{\Im}{{\rm Im}\,}
\renewcommand{\div}{{\rm div}}
\newcommand{\Id}{{\rm Id}\,}
\newcommand{\Rmnum}[1]{\uppercase\expandafter{\romannumeral #1} }
 \numberwithin{equation}{section}
\newtheorem{thm}{Theorem}[section]
\newtheorem{lem}[thm]{Lemma}
\newtheorem{prop}[thm]{Proposition}
\newtheorem{rmk}[thm]{Remark}
\newtheorem{cor}[thm]{Corollary}
\def\curl{\mathop{\rm curl}\nolimits}
\def \d {\mathrm {d}}
\def\cA{{\mathcal A}}
\def\cB{{\mathcal B}}
\def\cE{{\mathcal E}}
\def\cH{{\mathcal H}}
\def\cI{{\mathcal I}}
\def\cL{{\mathcal L}}
\def\cM{{\mathcal M}}
\def\cO{{\mathcal O}}
\def\cU{{\mathcal U}}
\let\f=\frac
\def \p {\partial}
\def\mR {\mathbb{R}}
\def \eps{\varepsilon}
\def \vep {\varepsilon}
\def \diag {\text{diag}}
\def\vp{\varphi}
\def \kpa {\kappa}
\def \rM {\mathrm{M}}
\def \rD {\mathrm{D}}
\def \rA {\mathrm{A}}
\def \rF {\mathrm{F}}
\def \rP {\mathrm{P}}
\def \rG {\mathrm{G}}
\def \rN {\mathrm{N}}
\def \rQ {\mathrm{Q}}
\def \rV {\mathrm{V}}
\def \rB  {\mathrm{B}}
\def \rU {\mathrm{U}}
\def \tT {\tilde{T}}
\def \pt {\partial_{t}}
\def\na{\nabla}
\def \ep{\epsilon}
\def \per{\mathrm{per}}
\def \i {\mathrm{i}}
\DeclareMathOperator{\tah}{th}
\DeclareMathOperator{\ch}{ch}
\DeclareMathOperator{\sh}{sh}
\date{\today}
\title{On the spectral stability of  periodic capillary-gravity waves}
\begin{document}

\author{Changzhen Sun}
\address{Université Marie-et-Louis-Pasteur, Laboratoire de Mathématiques de Besançon, UMR CNRS 6623, 25000 Besançon, France}
\email{changzhen.sun@univ-fcomte.fr}

\author{Erik Wahlén}
\address{Centre for Mathematical Sciences, Lund University, P.O. Box 118, 22100 Lund, Sweden}
\email{erik.wahlen@math.lu.se}

\begin{abstract}
In this paper, we investigate the spectral stability of periodic traveling waves in the two dimensional capillary-gravity water wave problem. We derive a stability criterion based on an index function, whose sign determines the spectral stability of the waves. This result aligns with earlier formal analyses by Djordjević \& Redekopp \cite{D-R-packets} and Ablowitz \& Segur \cite{A-Segur}, which employed the nonlinear Schrödinger approximation in the modulational regime. In particular, we show that instability is excluded near spectral crossings away from the origin when the surface tension is positive and the inverse square of the Froude number $\alpha\in(0,1]$, which results from the fact that the corresponding Krein signatures are identical. 
It is also shown that there exists $\alpha_1 = (23 - 3\sqrt{41})/8$ and a curve $\beta_0: (\alpha_1, 1]\rightarrow \mR_{+},$ such that for any $\alpha \in (\alpha_1, 1]$, small amplitude periodic waves are spectrally stable when $\beta > \beta_0(\alpha)$. These findings highlight the stabilizing effect of surface tension on periodic capillary-gravity waves.

\vspace{0.5em}

{\small \paragraph {\bf Keywords:} capillary-gravity water-waves system; periodic traveling-waves; spectral stability; Hamiltonian system; Krein signature.
}

\vspace{0.5em}
{\small \noindent {\bf 2020 MSC:} 76B15, 35B35, 35C07, 35B10, 37K45, 35Q35.
}
\end{abstract}

\maketitle

\tableofcontents 

\section{Introduction}

In this paper, we study the spectral stability of periodic waves of the two-dimensional water-wave system in a channel with finite depth. 
The equations of motions are given by 
\beq\label{o-in}
    \left\{ \begin{array}{l}
         \pt U+U \cdot \na U+\na P +g e_z=0, \\[3pt]
          \div\, U=0,\, \curl U=0.
    \end{array}
    \right. (t,x,z)\in \Omega_t \,.
\eeq
Here the fluid domain $\Omega_t$, which is also  unknown, is defined as 
\begin{align*}
    \Omega_{t}\coloneqq\{(x,z)\in \mR^2 : 0<z<h+\zeta(t,x) \},
\end{align*}
where $h$ is a fixed constant and $\zeta(t,x)>-h$ is the free surface,  which satisfies the kinematic condition 
\beq\label{o-sur}
\p_t \zeta (t,x) +\p_x \zeta  (t,x)\cdot U_1(t,x,  h+\zeta(x) )-U_2(t,x, h+\zeta(x))=0\,.
\eeq
The system \eqref{o-in}, \eqref{o-sur} is closed with the boundary conditions for $U_2$ at the lower boundary and for the pressure $P$ at the upper boundary: 
\begin{align}
& U_2|_{z=0}=0, \\
 &   P|_{z=h+\zeta}=P_{atm}-T \p_x \bigg(\f{\p_x\zeta}{\sqrt{1+(\p_x\zeta)^2}}\bigg), \label{pressure-bd}
\end{align}
where $P_{atm}$ is a constant and can be chosen to be zero, and $T$ is the surface tension parameter. 

As the velocity $U$ is curl-free, we can write $U=\na\phi$, with $\phi$ being the velocity potential. Changing frame $x\rightarrow x-ct$,
equations \eqref{o-in}--\eqref{pressure-bd} reduce to 
\beqs
\left\{\begin{array}{ll}
(\p_x^2+\p_z^2)\phi=0,\, &\mathrm{ in\, } \,\Omega_t, \\
    \p_t \phi-\p_x\phi+\f12(\p_x^2+\p_z^2)\phi+g\,\zeta-T\p_x \bigg(\f{\p_x\zeta}{\sqrt{1+(\p_x\zeta)^2}})=0 \qquad & \mathrm{ on\, }  z=h+\zeta, \\
\pt \zeta-\p_x\zeta+\p_x\zeta\p_x\phi-
\p_z\phi=0 &\mathrm{ on\, }  z=h+\zeta, \\ 
   \p_z\phi=0  & \mathrm{ on\, } z=0.
\end{array}\right.
\eeqs
To make the equations non-dimensional, we perform the  change of variables
\beq\label{changeofvarible} 
\tilde{t}={ct}/{h}, \quad 
\tilde{x}={x}/{h}, \quad \tilde{z}={z}/{h}, \quad \tilde{\zeta}=\zeta/h, \quad  \tilde{\phi}=\phi/(ch). 
\eeq
The new equations then read
\begin{equation}\label{eq-inter}
  \left\{\begin{array}{ll}
   (\p_x^2+\p_z^2)\phi=0\, & \mathrm{ in\, }  0<z<1+\zeta,   \\
    \p_t \phi-\p_x\phi+\f12(\p_x^2+\p_z^2)\phi+\alpha\,\zeta-\beta\p_x \bigg(\f{\p_x\zeta}{\sqrt{1+(\p_x\zeta)^2}}\bigg)=0  &  \mathrm{ on\, }  z=1+\zeta, \\
    \pt \zeta-\p_x\zeta+\p_x\zeta\p_x\phi-
\p_z\phi=0 &\mathrm{ on\, }  z=1+\zeta, \\ 
   \p_z\phi=0  & \mathrm{ on\, } z=0,
\end{array}    \right.
\end{equation}
where 
\begin{align}\label{eq-Froude-Weber}
    \alpha=\f{gh}{c^2}, \qquad \beta=\f{T}{hc^2}
\end{align}
are the inverse square of the Froude number and the Weber number, respectively, and we have dropped the tildes for notational convenience.

Let $\vp$ be the trace of the velocity potential on the surface $1+\zeta$ and define the Dirichlet--Neumann operator $G[\zeta]\cdot$ by
\begin{align*}
    G[\zeta]\vp=(\p_z \phi-\p_x\zeta\cdot\p_x \phi)|_{z=1+\zeta}\, ,
\end{align*}
where $\phi$ is harmonic with $\phi|_{z=1+\zeta}=\vp$ and $\partial_z \phi|_{z=0}=0$.
Then \eqref{eq-inter} can be reduced further to a system in $(\zeta,\vp)$ evaluated on the surface
\beq \label{sur-mov-uni}
    \left\{ \begin{array}{l}
         \pt \zeta =\p_x \zeta+ G[\zeta]\varphi\,, \\
         \pt \varphi=\p_x\vp-\f12 |\p_x\varphi|^2+\f12 \f{(G[\zeta]\varphi+\p_x\vp\cdot\p_x\zeta)^2}{1+|\p_x\zeta|^2}+\alpha\zeta+\beta\p_{x}\big(\f{\p_x \zeta}{1+|\p_x\zeta|^2}\big).
    \end{array}
    \right. 
\eeq

When $\beta=0$, the above system reduces to the pure gravity water-wave problem, which admits a family of periodic stationary solutions for
 $\alpha>1$, commonly known as Stokes waves. The stability of Stokes waves has attracted significant  interest in recent years. We refer the reader to Section \ref{background-stokes} for further background on this topic.
 
 When $\beta>0$, that is, when  surface tension is taken into account, the system is referred to as the capillary-gravity water-wave system. 
The bifurcation of one-dimensional small-amplitude periodic waves to \eqref{sur-mov-uni} from the trivial solution is determined by the limiting wave number, which is the positive root of the linear dispersion relation. 
\begin{align}\label{disp-relation-1}
    D(k)\coloneqq (\alpha+\beta k^2)\sh (k)-k\ch (k)=0.
\end{align}
The number and multiplicity of the positive roots of \eqref{disp-relation-1}  vary, and thus the form of the periodic waves differs, depending on the values of $(\alpha,\beta);$ we refer to the introduction of \cite{Mariana-Tien-Erik} for a summary of known results. 
Specifically, 
it is  known that in the regions 
\beq\label{region-I-III}
\rm I=
\{(\alpha,\beta) : \alpha\in (0,1), \beta>0\}, \qquad \rm III=\{(\alpha,\beta) : \alpha=1, \beta\in (0,1/3)\},
\eeq
the system \eqref{sur-mov-uni} admits a single family of small-amplitude periodic solutions with wave number close to the unique positive root of \eqref{disp-relation-1}. In contrast, in the region $$\rm II=\{(\alpha,\beta) : \alpha>1, 0<\beta<\tilde{\beta}(\alpha) \},$$
there exist two geometrically distinct families of periodic waves with limiting wave numbers  $\kpa_1, \kpa_2$, corresponding to two distinct positive roots of  \eqref{disp-relation-1} under the assumption that  $\f{\kpa_2}{\kpa_1}\notin \mathbb{N}$.
Here $\tilde{\beta}(\alpha)\in (0,\f13)$ is a smooth decreasing curve in 
$\alpha\in (1,+\infty)$.  In the present work, we will mainly focus on the stability of periodic waves in region $\rm I\cup \rm III$ which is of particular interest as they arise as the limiting states of a family of generalized solitary waves \cite{Lombardi-ARMA,Thomas-CPAM} at $\pm\infty$. Investigating the stability of periodic waves in this regime thus constitutes a crucial step in establishing the stability of generalized solitary waves. The stability analysis of periodic waves in region $\rm II$ is more involved due to the presence of non-zero spectral crossings with opposite Krein signatures. A brief discussion of their stability near such crossings is given in Appendix \ref{app-regionII-highcrossing}.

Let $(\tilde{\zeta}_{\vep}, \tilde{\vp}_{\vep})$ be the single family of periodic small-amplitude waves for $(\alpha,\beta)\in \rm I$,
parameterized by the amplitude $\varepsilon$, 
\begin{align*}
    (\tilde{\zeta}_{\vep}, \tilde{\vp}_{\vep})(x)=({\zeta}_{\vep}, {\vp}_{\vep})(k_{\vep} x),
\end{align*}
where $k_{\vep}$ is the wave number and 
$({\zeta}_{\vep}, {\vp}_{\vep})$ are $2\pi$-periodic functions. Note that $ (\tilde{\zeta}_{\vep}, \tilde{\vp}_{\vep})$ and the wave number  also depend on the parameters $(\alpha,\beta)$, but we suppress this dependence for notational simplicity. 
See Lemma \ref{lem-waveprofile} for some useful information on  the wave profiles $({\zeta}_{\vep}, {\vp}_{\vep})$. To study the stability of $(\tilde{\zeta}_{\vep}, \tilde{\vp}_{\vep})$ for the system \eqref{sur-mov-uni}, it is equivalent to study the stability of  $({\zeta}_{\vep}, {\vp}_{\vep})$ for the  system obtained by the scaling  $x\rightarrow k_{\vep} x$.
By using the shape derivative of the Dirichlet--Neumann operator \cite{Lannes}, we find that 
the linearization of the scaled system
around $({\zeta}_{\vep}, {\vp}_{\vep})$ takes the form 
 \begin{align*}
\left\{ \begin{array}{l}
     \displaystyle  \pt \zeta= k_{\vep}\p_x \zeta+ G[\zeta_{\vep}]\varphi-G[\zeta_{\vep}](Z_{\vep}\zeta)-k_{\vep}\p_x(v_{\vep}\zeta),   \\ [5pt]
     \pt \vp =(1-v_{\vep})k_{\vep}\p_x\vp+Z_{\vep}G[\zeta_{\vep}](\vp-Z_{\vep}\zeta)+P[\zeta_{\vep}]\zeta-(\alpha+Z_{\vep}k_{\vep}\p_x v_{\vep})\zeta,
\end{array}\right.   
\end{align*}
with 
\beq \label{defZv}
Z_{\vep}\coloneqq \f{G[\zeta_{\vep}]\varphi_{\vep}+k_{\vep}^2\p_x\zeta_{\vep}\p_x\vp_{\vep}}{1+|k_{\vep}\p_x\zeta_{\vep}|^2}, \qquad \qquad v_{\vep}\coloneqq k_{\vep}(\p_x \vp_{\vep}-Z_{\vep}\p_x\zeta_{\vep})
\eeq   
and 
\beq \label{def-P}
P[\zeta_{\vep}]
\coloneqq \beta k_{\vep}\p_x\bigg(
\f{k_{\vep}\p_x}{(1+(k_{\vep}\p_x\zeta_{\vep})^2)^{\f32}}\bigg).
\eeq
Moreover, the scaled D-N operator $G[\zeta_{\vep}]$ is defined as $G[\zeta_{\vep}]\vp\coloneqq(\p_z\Phi-k_{\vep}^2\p_x\Phi\p_x\zeta_{\vep})|_{z=1+\zeta_{\vep}}$, where $\Phi$ solves the elliptic problem 
\begin{align*}
    (k_{\vep}^2\p_x^2+\p_z^2)\Phi=0 \,\, \text { in } 0< z <1+\zeta_{\vep}; \quad \Phi|_{z=1+\zeta_{\vep}}=\vp, \quad \p_z\Phi|_{z=0}=0.
\end{align*}

Define the new unknowns $V_1=\zeta, V_2=\vp-Z_{\vep}\zeta$. Then the linearized system is changed into 
\beq\label{def-Lep}
\pt {V}=
L^{\vep} {V}, \qquad \qquad L^{\vep}= \left( \begin{array}{cc}
   k_{\vep}\p_x(d_{\vep}\cdot)  &  G[\zeta_{\vep}]\cdot   \\[5pt]
P[\zeta_{\vep}]-  w_{\vep}& k_{\vep}d_{\vep}\p_x
\end{array}\right),
\eeq
 where 
 \beqs 
 d_{\vep}\coloneqq 1- v_{\vep}, \qquad \,w_{\vep}\coloneqq  \alpha-d_{\vep}k_\vep \p_x Z_{\vep}.
 \eeqs
Here we again suppress the dependence of $L^{\vep}$ on the parameters 
 $(\alpha,\beta)$ for simplicity.
 
We are interested in the spectral stability of the periodic waves for the system \eqref{sur-mov-uni} under localized perturbations. This 
is equivalent to studying the spectrum of the linear operator $L^{\vep}$ in the (localized) space 
$$Y=H^1(\mR)\times H_{*}^{\f12}(\mR),\qquad  \big(H_{*}^{\f12}(\mR)\coloneqq\sqrt{G[0]}^{-1}L^2(\mR)\big)$$
with the domain $H^{\f52}(\mR)\times H_{*}^{\f32}(\mR)$. 

An important feature for the operator $L^{\vep}$
is that it is of Hamiltonian form, 
\begin{align}\label{relation-imp}
    L^{\vep}=J A^{\vep},
\end{align}
with 
\begin{align*}
J\,&=\,\begin{pmatrix}0&1\\-1&0\end{pmatrix}\,,
\qquad A^{\vep}=\,\begin{pmatrix}
-P[\zeta_{\vep}]+w_{\vep}&-d_{\vep} k_{\vep}\p_x\\[3pt]
 k_{\vep} \p_x(d_{\vep}\cdot)&G[\zeta_{\vep}]\end{pmatrix}.
\end{align*}
From this one derives 
\begin{align}\label{Hamiltonianstructure}
    (L^{\vep})^{*}=-J L^{\vep} J^{-1}.
\end{align}
Consequently, the spectrum of $L^{\vep}$ is symmetric with respect to the imaginary axis and thus the periodic waves are stable (i.e.,~have no spectrum in the open right half-plane) if and only if the spectrum of $L^{\vep}$ is contained in the imaginary axis.

Our main result is the following. 
\begin{thm}\label{thm-1}
Let 
$(\alpha,\beta)$ belong to the region $\rm I \cup \rm III$ defined in \eqref{region-I-III},
and $\kappa=\kappa(\alpha,\beta)$ be uniquely determined by the dispersion relation 
\beq\label{disp-relation}
\alpha+\beta\kpa^2=\f{\kpa}{\tah(\kpa)}.
\eeq
Let \[e_{*}=\f12\Big(1+\f{2\kpa}{\sh(2\kpa)}\Big)+\beta\kpa\tah(\kpa)>1\geq \sqrt{\alpha}.\]
There exists a number $C=C(\alpha, \beta)$, 
given by 
\begin{align}\label{def-index}
C(\alpha,\beta)= \f{\alpha \kpa^2 \sh(2\kpa)}{\,2(e_{*}^2-\alpha)}\Big(1+\f{\kpa e_{*}}{\alpha \sh(2\kpa)}\Big)^2-\f{2\kpa \,k_2}{c \p_c \kpa}\, , 
\end{align}
such that   sufficiently small-amplitude periodic waves $(\zeta_{\varepsilon}, \varphi_{\varepsilon})$ are spectrally stable if $C > 0$ and unstable if $C < 0$.
\end{thm}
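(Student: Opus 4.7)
The plan is to apply a Floquet--Bloch decomposition and then analyze the resulting family of spectral problems perturbatively in the wave amplitude $\vep$. Writing $V(x) = e^{\i \mu x} U(x)$ with $U$ $2\pi$-periodic and $\mu \in [-1/2, 1/2)$, the spectrum of $L^{\vep}$ on $Y$ decomposes as the union over $\mu$ of the spectra of the Bloch operators $L^{\vep}_\mu$ acting on the torus. At $\vep = 0$ this Bloch spectrum lies on the imaginary axis, and the Hamiltonian identity \eqref{Hamiltonianstructure} forces any instability emerging for $\vep > 0$ to arise from the collision of two purely imaginary Bloch eigenvalues carrying opposite Krein signatures; such a pair can split symmetrically into the complex plane, whereas same-signature collisions remain pinned to the axis by the Krein--Moser theorem.

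I would first dispose of the collisions that occur at Floquet exponents and eigenvalues away from the origin. For $\vep = 0$ the Bloch spectrum is explicit and determined by integer shifts of the dispersion relation \eqref{disp-relation-1}, so the locations of all such nontrivial collisions are found by a purely algebraic computation involving $\kpa$, the Floquet exponent $\mu_*$, and an integer shift. Evaluating the sign of the symplectic form $\langle A^0 \cdot, \cdot \rangle$ on the corresponding eigenvectors yields the Krein signature; one then verifies that in the region $\rm I \cup \rm III$ every such nontrivial collision carries matching signatures, so the perturbed eigenvalues remain on the imaginary axis. Region $\rm II$ is more delicate and is handled separately in Appendix \ref{app-regionII-highcrossing}.

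The substance of the theorem lies in the behavior near the origin, where for $\vep = \mu = 0$ zero is an eigenvalue of $L^0_0$ of algebraic multiplicity four, accounting for the translation and mass-type symmetries intrinsic to the Hamiltonian structure \eqref{relation-imp}. My plan is to carry out a Lyapunov--Schmidt reduction onto this four-dimensional generalized kernel for $(\vep,\mu)$ small, producing a $4 \times 4$ matrix $M(\vep,\mu)$ whose spectrum captures precisely the Bloch eigenvalues bifurcating from zero. The Hamiltonian structure is inherited by $M$, so its characteristic polynomial factors in a way that isolates two pairs of eigenvalues, and these pairs remain on the imaginary axis exactly when a single real discriminant is nonnegative. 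Matching terms order by order in $\vep$ and $\mu$ using the wave-profile expansion supplied by Lemma \ref{lem-waveprofile} should identify this discriminant with the index $C(\alpha,\beta)$ in \eqref{def-index}: the first summand, with its resonant denominator $e_{*}^2 - \alpha$, is the contribution of the second-harmonic response of the Dirichlet--Neumann operator, while $-2\kpa k_2/(c\,\p_c\kpa)$ records the $O(\vep^2)$ nonlinear correction to the wave-number--speed relation.

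The principal obstacle is the explicit computation of $M(\vep,\mu)$ to the required order. Each correction demands a shape-derivative expansion of $G[\zeta_\vep]$ as developed in \cite{Lannes}, combined with the $\vep$-expansions of the coefficients $Z_\vep, v_\vep, d_\vep, w_\vep$ and $P[\zeta_\vep]$ defined in \eqref{defZv}--\eqref{def-P}; because these quantities interact nonlocally with $\zeta_\vep$ and $\vp_\vep$, a careful bookkeeping of Fourier modes is essential to isolate exactly the combination that reproduces \eqref{def-index} and to rule out any spurious lower-order contributions. Once $M(\vep,\mu)$ is in hand, the stability dichotomy follows from standard arguments for perturbations of Hamiltonian matrices with a Jordan block at the origin, yielding spectral stability precisely when $C(\alpha,\beta) > 0$ and instability when $C(\alpha,\beta) < 0$.
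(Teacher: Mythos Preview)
Your overall architecture matches the paper's: Bloch--Floquet reduction, a Krein-signature argument to dispose of nonzero crossings in region $\rm I\cup III$, and a four-dimensional reduction at the origin. Two points in your description of the modulational step are off, and the first is a genuine gap. You assert that the Hamiltonian structure inherited by the $4\times4$ matrix $M(\vep,\mu)$ makes its characteristic polynomial factor into two quadratics. It does not: the symmetry $(L_\xi^\vep)^*=-JL_\xi^\vep J^{-1}$ only gives reflection of the spectrum across $\i\mR$, which is a constraint on the quartic but not a factorization. The paper obtains the splitting into two pairs by a different mechanism. After a diagonal rescaling the $4\times4$ representation matrix becomes real with a $2\times2$ block structure; the two diagonal blocks have eigenvalues near $1-e_*$ and $1\pm\sqrt{\alpha}$ respectively, and the hypothesis $e_*>1\geq\sqrt{\alpha}$ (stated in the theorem, unused in your sketch) guarantees these are uniformly separated. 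That separation is what permits an explicit similarity transformation eliminating one off-diagonal block, after which the eigenvalues of the two $2\times2$ pieces can be read off directly. One block is always stable; the other carries the discriminant that becomes $C(\alpha,\beta)$. If you attempt to factor via Hamiltonian symmetry alone you will not reach this decoupling.

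Second, your attribution of the denominator $e_*^2-\alpha$ to second-harmonic resonance is backwards. That denominator records the gap between the group velocity $e_*$ and the long-wave (mean-flow) speed $\sqrt{\alpha}$, and it arises precisely from the eigenvalue separation just described; it is the mean-flow interaction term. The second-harmonic resonance sits inside $k_2$, via the factor $D(2\kappa)^{-1}$ in the coefficient $c(\kappa)$ from Lemma~\ref{lem-waveprofile}, and therefore lives in the \emph{second} summand $-2\kappa k_2/(c\,\p_c\kappa)$ of \eqref{def-index}. This does not affect the logic of the proof, but it will matter when you organize the bookkeeping you describe in your final paragraph.
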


Consequently, the stability analysis of the periodic waves reduces to determining  when the index function $C=C(\alpha,\beta)$ changes sign.
Notably, this is the same index function that governs modulational (in)stability, as it determines whether $L^{\vep}$ exhibits unstable spectrum near the origin—meaning the rest of the spectrum lies along the imaginary axis. This latter fact follows from the observation that the Krein signatures associated with the eigenspaces of the crossing spectrum away from zero are identical, which in turn, stems from the presence of non-negligible surface tension (i.e., $\beta>0$). 

Our modulational stability analysis also applies to the case $(\alpha,\beta)\in \rm  II$,
yielding the following criterion for modulational stability across the full range of $(\alpha,\beta)$.
\begin{thm}\label{thm-fullmodulation}
Let $(\alpha,\beta)\in \rm I \cup II\cup III$ and let $\kappa>0$ be the wave number, satisfying \eqref{disp-relation}. 
Define 
\beq\label{def-index-real}
\tilde{C}(\alpha,\beta)=w_1''(0)C(\alpha,\beta),
\eeq
where $C(\alpha,\beta)$ is given by \eqref{def-index} and
$w_1''(0)$ is the second derivative of the function
\beq\label{def-w1''}
w_1(\xi)=\sqrt{\kpa(1+\xi)\tah(\kpa(1+\xi))(\alpha+\beta\kpa^2(1+\xi)^2)}.
\eeq
 There exists $\ep_0>0$ such that for any $|\vep|\leq \ep_0$, if $\tilde{C}(\alpha,\beta)>0$, the spectrum of $L_0^{\vep}$ in the vicinity of the origin lies on the imaginary axis while if  $\tilde{C}(\alpha,\beta)<0$, there exists unstable spectrum in the vicinity of the origin. 
\end{thm}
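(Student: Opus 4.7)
The plan is to analyze the spectrum of $L^{\vep}$ in a neighborhood of the origin via a Bloch--Floquet decomposition followed by a Lyapunov--Schmidt reduction to a finite-dimensional Hamiltonian matrix, and then to compute the leading coefficients of its characteristic polynomial to identify $\tilde{C}(\alpha, \beta)$ as the modulational index.

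First I would conjugate $L^{\vep}$ by $e^{\i \xi x}$ and view it as a direct integral of operators $L^{\vep}_{\xi}$ acting on $2\pi$-periodic functions, parametrized by the Bloch number $\xi \in (-\f12, \f12]$; the spectrum of $L^{\vep}$ on $\mR$ decomposes as $\bigcup_{\xi} \Sp(L^{\vep}_{\xi})$, so it suffices to track the small eigenvalues of $L^{\vep}_{\xi}$ for $(\vep, \xi)$ near $(0, 0)$. At $(\vep, \xi) = (0, 0)$, $L^{0}_{0}$ is the linearization at the trivial state in the frame moving with the bifurcation speed, and a direct Fourier computation combined with \eqref{disp-relation} shows that $0$ is an eigenvalue of $L^{0}_{0}$ of geometric multiplicity two and algebraic multiplicity four, with generalized kernel supported on the $0$ and $\pm 1$ Fourier modes.

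The second step is to apply Kato's regular perturbation theory to isolate a four-dimensional invariant subspace $E(\vep, \xi)$ depending analytically on the parameters and to reduce the spectral problem to that of a matrix $M(\vep, \xi)$ acting on it. The Hamiltonian identity \eqref{Hamiltonianstructure} is inherited by the reduction: in an appropriate symplectic basis one has $M(\vep, \xi) = \mathbb{J}_{4}\, H(\vep, \xi)$ with $H$ symmetric and $\mathbb{J}_{4}$ a fixed symplectic form, so its characteristic polynomial is even in $\lambda$,
\beqs
\det(\lambda I - M(\vep, \xi)) = \lambda^{4} + p(\vep, \xi)\, \lambda^{2} + q(\vep, \xi),
\eeqs
and modulational (in)stability near the origin is encoded by the signs of $p$, $q$ and of the discriminant $p^{2} - 4q$.

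The third and main computational step is the expansion of $p$ and $q$ to order $(\vep^{2}, \xi^{2})$, using the wave-profile expansion of Lemma \ref{lem-waveprofile} and the shape-derivative formulae for $G[\zeta_{\vep}]$. Two universal cancellations should occur: persistence of the zero eigenvalue along the bifurcation branch forces $q(\vep, 0) \equiv 0$, while spectral stability of the trivial state forces $q(0, \xi)$ to be a non-negative explicit expression tied to the positive dispersion branch $w_{1}(\xi)$ of \eqref{def-w1''}, read off from \eqref{disp-relation-1} with $k = \kpa(1 + \xi)$. After these cancellations, the leading nontrivial term of $q$ is of mixed order,
\beqs
q(\vep, \xi) = K_{0}(\alpha, \beta)\, \tilde{C}(\alpha, \beta)\, \vep^{2}\, \xi^{2} + o(\vep^{2}\xi^{2} + \vep^{4} + \xi^{4}),
\eeqs
with an explicit positive constant $K_{0}$. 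The second-derivative factor $w_{1}''(0)$ entering $\tilde{C}$ arises from the parametrization of the unperturbed branch $\lambda = \i w_{1}(\xi)$ of $L^{0}_{\xi}$, which governs how the $\xi$-variation of the small eigenvalues combines with their $\vep$-variation to produce the index $C(\alpha, \beta)$ of Theorem \ref{thm-1}. Once $q$ is in this form the conclusion is immediate: $\tilde{C} > 0$ gives $q > 0$ on small nonzero $(\vep, \xi)$, which together with the positivity of $p$ inherited from the $\vep = 0$ limit and the estimate $p^{2} - 4q > 0$ places all four eigenvalues on the imaginary axis; while $\tilde{C} < 0$ makes $q < 0$, forcing a real pair of eigenvalues and hence an unstable branch.

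The main obstacle will be the algebraic bookkeeping for the reduced matrix, in particular verifying the two universal cancellations explicitly and isolating the coefficient of $\vep^{2}\xi^{2}$ in $q$. This requires systematic use of the solvability conditions from the Hamiltonian symmetry, of the bifurcation equations for $(\zeta_{\vep}, \vp_{\vep}, c)$ (which introduce the $\p_{c} \kpa$ and $k_{2}$ terms visible in \eqref{def-index}), and of the identities for $Z_{\vep}, v_{\vep}, d_{\vep}, w_{\vep}$ from \eqref{defZv} and \eqref{def-Lep}, together with the dispersion identity \eqref{disp-relation}, in order to reduce everything to tractable trigonometric integrals of products of the Fourier modes of the wave profile.
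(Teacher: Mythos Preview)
Your reduction to a four-dimensional matrix via Bloch--Floquet and Kato perturbation is correct and matches the paper's strategy. The gap is in the structure you impose on the reduced matrix: you claim that in a symplectic basis $M(\vep,\xi)=\mathbb{J}_4 H(\vep,\xi)$ with $H$ symmetric forces the characteristic polynomial to be even in $\lambda$, i.e.\ $\lambda^4+p\lambda^2+q$. This is true for a \emph{real} Hamiltonian matrix, but after Bloch transform $L_\xi^\vep$ is a \emph{complex} operator (it involves $\p_x+\i\xi$), and the inherited symmetry is $(L_\xi^\vep)^*=-J^{-1}L_\xi^\vep J$, which only gives $\lambda\mapsto -\bar\lambda$. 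The reality symmetry $\lambda\mapsto\bar\lambda$ that would complete this to $\lambda\mapsto -\lambda$ is lost for fixed $\xi$: it relates $L_\xi^\vep$ to $L_{-\xi}^\vep$, not to itself. Concretely, at $\vep=0$ the four small eigenvalues of $L_\xi^0$ are, to leading order in $\xi$,
\[
\i\kpa\xi\,(1+\sqrt{\alpha}),\quad \i\kpa\xi\,(1-\sqrt{\alpha}),\quad \i\kpa\xi\,(1-e_*)\ \text{(double)},
\]
which are not arranged in $\pm$ pairs. Hence the characteristic polynomial has nonvanishing odd coefficients, and the sign analysis of $p,q,p^2-4q$ you propose does not apply.

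The paper circumvents this as follows. After a conjugation by $\mathrm{diag}(\i\kpa\xi,1,\i\kpa\xi,1)$ and division by $\i\kpa\xi$ (this is where the parity/reversibility of the profile is used), the representation matrix becomes a \emph{real} $4\times 4$ matrix $\rM_\xi^\vep$ whose diagonal $2\times 2$ blocks $\rM_{11},\rM_{22}$ have eigenvalues near $1-e_*$ (double) and $1\pm\sqrt{\alpha}$ respectively, with off-diagonal blocks of order $\cO(\vep)$. The separation $e_*\neq\sqrt{\alpha}$ (automatic in $\rm I\cup III$, an extra hypothesis in $\rm II$) then permits an exact block-triangularization via the implicit function theorem, reducing the question to the discriminant of a single $2\times 2$ block $\rM_{11}-\vep^2 Q N_{21}$. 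It is in this discriminant that $w_1''(0)$ and $C(\alpha,\beta)$ combine to give $\tilde C(\alpha,\beta)$. Your biquadratic ansatz misses this $2+2$ splitting and the role of the separation condition, which is essential for the argument to go through.
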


Let us remark that when $(\alpha,\beta)\in \rm I \cup III$, it holds that $w_1''(0)>0$ so that the criterion for  modulational stability in the above theorem is consistent with that of Theorem \ref{thm-1}.

\begin{figure}[htbp]\label{figure1}
    \centering
\includegraphics[width=0.6\linewidth,height=0.5\textheight, keepaspectratio]{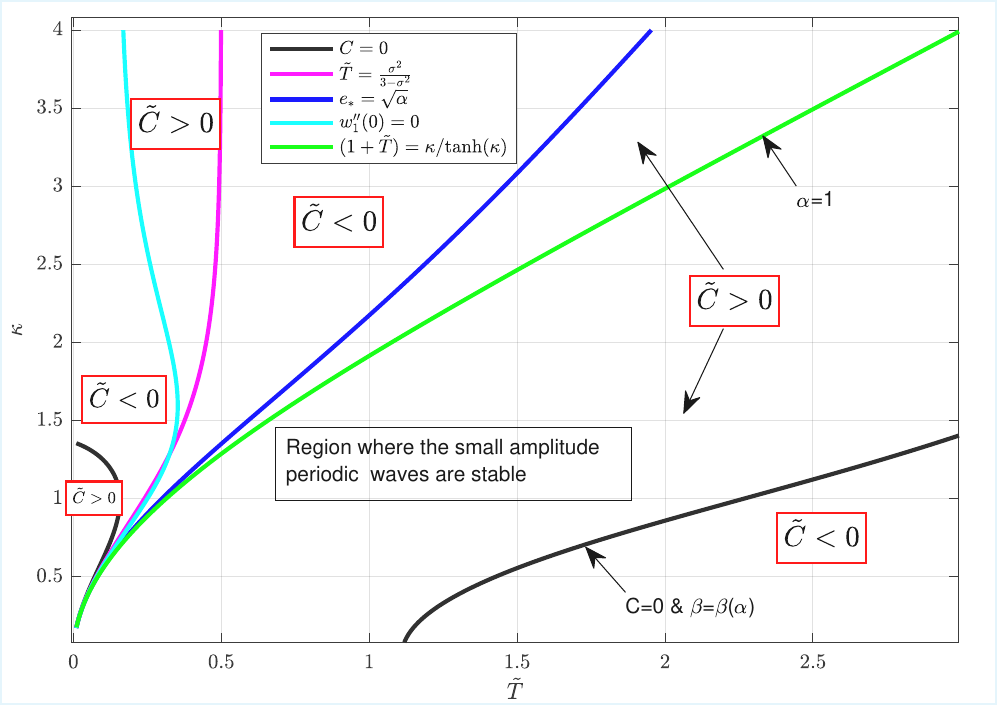} 
    \caption{The stability diagram in the  $(\tilde{T},\kappa)$ plane with $\tilde{T}=\frac{\beta}{\alpha}\kpa^2$.
    The black curves 
    are  such that $C=0$. The cyan curve is 
    where $w_1''(0)=0$. The purple curve is where $\sigma^2-\tilde{T}(3-\sigma^2)=0$ whereas the blue curve represents $e_{*}=\sqrt{\alpha}$.
     The green curve corresponds to $\alpha=1$. The region between the green curve and the right black curve indicates where the small-amplitude periodic waves are (globally) stable.}
\end{figure}

The modulational (in-)stability of periodic capillary-gravity water waves has been studied in \cite{D-R-packets, A-Segur}, where an index function was (formally) derived, using the Schrödinger approximation in the modulational regime, to determine whether such waves are modulationally stable or unstable. 
Through straightforward calculations (see Appendix \ref{appen-indices}), the index 
    $\tilde{C}(\alpha,\beta)$ is found to be proportional to the index function $\lambda(\alpha,\beta)\nu(\alpha,\beta)$ obtained in 
   (2.24b, 2.24h) of \cite{A-Segur} (see also (2.17) of \cite{D-R-packets}): 
    \begin{align*}
\tilde{C}(\alpha,\beta)=
\sqrt{\f{\kpa}{4\alpha}}\kpa^3
\ch^2(\kpa) \, \lambda(\alpha,\beta)\nu(\alpha,\beta). 
\end{align*}
This correspondence allows us to rigorously justify the formal predictions in \cite{A-Segur} and thus provides a rigorous proof of the modulational (in)stability of periodic capillary-gravity water waves. Related results have also been obtained by Hur and Yang \cite{Hur-Yang-capillary}, 
who derive stability index functions for both modulational and non-modulational (nearest two crossings to the origin) stability using an Evans function approach, which yields explicit modulational index functions and explicit but complicated non-modulational ones,  which are studied numerically.
However, on the one hand, when applied to water waves, the Evans function approach tends to be much more computationally intensive. On the other hand, the Evans function approach reduces the analysis of stability near high-frequency crossings to determining the sign of index functions. This contrasts with Kato's perturbation theory, where stability can be characterized by the non-vanishing of index functions, which facilitates the spectral analysis near high-frequency crossings.
We therefore approach the problem using Kato's perturbation theory, which not only yields a simple  criterion for modulational (in-)stability but also provides a more precise description of the spectrum near the origin.  Moreover, we rigorously prove that no instability arises from \textit{any} spectral crossings away from the origin when $$(\alpha,\beta)\in \rm I\cup III=\{(\alpha,\beta): \alpha\in(0,1),\beta>0 \text{\, or\,} \alpha=1,\beta\in(0, 1/3)\},$$
thereby establishing global spectral stability beyond the modulational regime. 
 To determine whether the wave is stable in the regime $(\alpha,\beta)\in\rm I\cup III$ we need to further study the sign of the index function 
$C(\alpha,\beta)$ defined in \eqref{def-index}.
\subsection{On the condition $C(\alpha, \beta)>0$ when $(\alpha,\beta)\in \rm I \cup III$ }

Since $\alpha,\beta$ and $\kappa$ are related through the dispersion relation \eqref{disp-relation}, the index function can also be regarded as a function of $\alpha$ and $\kappa$. Denote 
\begin{align*}
    \underline{C}(\alpha, \kappa)=\frac{8(e_{*}^2-\alpha)}{\kappa^3\ch^2(\kappa)}C\big(\alpha, \kappa^{-2}(\kappa/\textrm{th}(\kappa)-\alpha) \big).
\end{align*}
By \eqref{rewrite-nu} and \eqref{relation-Cnu}, it holds that 
\begin{align*}
 \underline{C}(\alpha, \kappa)={8\alpha} \f{\tah(\kpa)}{\kpa}
 \Big(1+\f{\kpa e_{*}}{\alpha \sh(2\kpa)}\Big)^2+(e_{*}^2-\alpha)\chi
\end{align*}
with $\chi$ defined in \eqref{def-chi}\footnote{$\chi$ is viewed as a function of $\kappa$ and $\alpha$  through the relation 
$\tilde{T}=\frac{\kappa}{\alpha\tah(\kappa)}-1$.}.
Note that $\underline{C}(\alpha, \kappa)$ is smooth for $\alpha\in(0,1], \kappa\in \mR_{+},$ and 
that its sign 
coincides with that of $C(\alpha,\beta)$ when  $(\alpha,\beta)\in \rm I \cup III.$ Direct computations show that 
\begin{align*}
  \lim_{\kappa \rightarrow 0^{+}}  \underline{C}(\alpha, 0)=-4\alpha^2+23\alpha-10,
\end{align*}
which is positive if $\alpha\in(\alpha_1,1]$ where 
$\alpha_1=(23-3\sqrt{41})/8\approx 0.47383$.\footnote{It also corresponds to the intersection point of the black curve in Figure \ref{figure1} with the axis $\kappa=0$. One can  express $C(\alpha,\beta)$ in terms of variables $\kappa$ and $\tilde{T}=\frac{\beta}{\alpha}\kappa^2$ and find that the intersection point corresponds to $\kappa=0$,  $\tilde{T}=\frac{3+3\sqrt{41}}{20}$.}
This, together with the smoothness of $\underline{C}(\alpha, \kappa)$ and the facts that  $(\partial_{\beta}\kappa)(\alpha,\beta)=-
\f{\kappa^2\tah(\kappa)}{2(e_{*}-1)}<0$ and $\lim_{\beta \to \infty} \kappa(\alpha, \beta)=0$ if $0<\alpha<1$, $\lim_{\beta \to 1/3} \kappa(1, \beta)=0$,
implies the following result.
\begin{thm}
  For any $\alpha\in(\alpha_1,1],$ there is $\beta_0(\alpha)$ such that for any $\beta>\beta_0(\alpha)$ ($\beta_0(1)<\beta<\f13$ if $\alpha=1$), it holds that $C(\alpha,\beta)>0.$ As a result, the small amplitude periodic waves are spectrally stable in this parameter regime.
\end{thm}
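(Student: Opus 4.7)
The plan is to deduce the statement as a soft consequence of the preparatory material assembled just before the theorem: work with the smooth extension $\underline{C}(\alpha,\kappa)$ of $C(\alpha,\beta)$, pin down its boundary value at $\kappa=0$, and then convert a small-$\kappa$ condition into a large-$\beta$ condition through the stated monotonicity of $\beta\mapsto\kappa(\alpha,\beta)$. Finally, apply Theorem \ref{thm-1} to pass from the sign of the index to spectral stability.

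The first input is that $\underline{C}$ is smooth on $(0,1]\times\mR_+$, extends continuously to $\kappa=0$, and has the same sign as $C(\alpha,\beta)$ throughout $\rm I\cup III$, through the relation $\beta=\kappa^{-2}(\kappa/\tah(\kappa)-\alpha)$ imposed by \eqref{disp-relation}. The second input is the explicit boundary value
\[
\lim_{\kappa\to 0^+}\underline{C}(\alpha,\kappa)=-4\alpha^2+23\alpha-10,
\]
which I would verify by Taylor-expanding $\tah$, $\sh$, $\ch$ at $0$ and simplifying the closed-form expression for $\underline{C}$ recorded above. The quadratic $-4x^2+23x-10$ has roots $(23\pm 3\sqrt{41})/8$; the smaller root is exactly $\alpha_1$ while the larger one exceeds $1$, so $\underline{C}(\alpha,0)>0$ for every $\alpha\in(\alpha_1,1]$. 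Continuity at $\kappa=0$ then produces, for each such $\alpha$, a threshold $\kappa_0(\alpha)>0$ on which $\underline{C}(\alpha,\kappa)>0$ for all $\kappa\in[0,\kappa_0(\alpha)]$.

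To turn this into a statement about $\beta$, I invoke the identity $(\partial_\beta\kappa)(\alpha,\beta)=-\kappa^2\tah(\kappa)/(2(e_*-1))<0$ recorded immediately before the theorem, so $\beta\mapsto\kappa(\alpha,\beta)$ is strictly decreasing and continuous. Combined with $\lim_{\beta\to\infty}\kappa(\alpha,\beta)=0$ for $\alpha<1$ and $\lim_{\beta\to 1/3}\kappa(1,\beta)=0$, the intermediate value theorem yields a threshold $\beta_0(\alpha)$, finite when $\alpha<1$ and lying in $(0,1/3)$ when $\alpha=1$, such that $\kappa(\alpha,\beta)<\kappa_0(\alpha)$ whenever $\beta>\beta_0(\alpha)$ (respectively $\beta_0(1)<\beta<1/3$). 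For such $\beta$ one has $\underline{C}(\alpha,\kappa(\alpha,\beta))>0$, hence $C(\alpha,\beta)>0$, and Theorem \ref{thm-1} delivers spectral stability of the small-amplitude periodic waves. There is no substantive obstacle beyond the algebraic verification of the limit at $\kappa=0$; its consistency is cross-checked by the footnoted identification of the intersection of $\{C=0\}$ with the axis $\kappa=0$ as $\tilde T=(3+3\sqrt{41})/20$, corresponding precisely to $\alpha=\alpha_1$.
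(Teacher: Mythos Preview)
Your proposal is correct and follows essentially the same approach as the paper: compute the limit $\underline{C}(\alpha,0)=-4\alpha^2+23\alpha-10>0$ for $\alpha\in(\alpha_1,1]$, use continuity in $\kappa$ to get positivity for small $\kappa$, then translate this via the monotonicity $\partial_\beta\kappa<0$ and the limits $\kappa\to 0$ as $\beta\to\infty$ (resp.\ $\beta\to 1/3$) into a threshold $\beta_0(\alpha)$, and conclude by Theorem~\ref{thm-1}. The paper presents this argument in the paragraph immediately preceding the theorem statement rather than as a separate proof, but the logic is identical.
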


We thus see that there is an unbounded region in the domain  $(\alpha,\beta)\in \mathrm{I}\cup \mathrm{III}$ where $C(\alpha,\beta)>0$, that is, where the small amplitude periodic waves are spectrally stable.
Numerical simulations indicate that there also exists a region for $\alpha<\alpha_1$ where $C(\alpha,\beta)>0.$ Specifically, it is found that
there exists a constant $\alpha_0\approx 0.39495,$
such that for any $\alpha\in (\alpha_0,\alpha_1)$ the function
$\underline{C}(\alpha,\cdot)$ has two roots denoted by $\kappa_0(\alpha),
\kappa_1(\alpha)$ and satisfies $\underline{C}(\alpha,\cdot)>0$ for any $\kappa\in (\kappa_1(\alpha),
\kappa_0(\alpha)),$ $\underline{C}(\alpha,\cdot)\leq 0$ if $\kappa\notin (\kappa_1(\alpha),
\kappa_0(\alpha)).$ Consequently, for  $\alpha\in(\alpha_0,\alpha_1), $ one has
$C(\alpha,\beta)>0$ when $\beta_0(\alpha)<\beta< \beta_1(\alpha)$ where 
$\beta_j(\alpha)=\beta(\alpha,\kappa_j(\alpha)), j=0,1.$
To summarize, the rigorous analysis above, together with the numerical simulations, suggests that
\begin{align*}
   C(\alpha,\beta) >0 \,\,
   \, 
   \mathrm{ for }\,\,(\alpha, \beta)\in I_1  \subset \mathrm{I}\cup \mathrm{III},
 \end{align*}
where 
$$I_1\coloneqq \{(\alpha,\beta)\in (0,1)\times\mR : \beta>\beta_0(\alpha)  \text{\,when\,}\alpha \in [\alpha_1,1)\text{\,and\,} \beta_0(\alpha)<\beta<\beta_1(\alpha) \text{\,when\,}\alpha \in (\alpha_0,\alpha_1)\}.$$ This region is illustrated in Figure~\ref{fig:figure2}.


The above findings highlight the impact of capillarity on the stability of periodic water waves. Indeed, numerous studies have investigated the stability of periodic waves in the context of gravity water waves (see \cite{BM-BFins,berti-highfreq,BMP-ARMA,Hur-Yang-ARMA,CDT-highfreq}) where surface tension is neglected. 
 Based on these investigations, it is believed that for 
 any depth $h,$ the
$2\pi$-periodic gravity Stokes waves are spectrally unstable. In other words, for the normalized system \eqref{sur-mov-uni} with $\beta=0$, these results indicate that for all  wave numbers $\kappa,$ the corresponding 
$2\pi/\kappa$-periodic waves are unstable.
However, the results above suggest that when surface tension is taken into account and is not too small, the capillary–gravity periodic waves become spectrally stable for parameter values
$(\alpha,\beta)$ lying within the region
$I_1.$  Note that in this regime, the parameter $\alpha$ does not exceed $1,$ and the periodic waves do not exist if $\beta=0.$ Therefore, one of the effects of capillarity is the emergence of a family of periodic waves when $\alpha\leq 1$ and that these waves are spectrally stable provided that $\alpha\geq \alpha_1$ and $\beta$ is not so small. 
Moreover, the region $I_1$ corresponds to the area between the green and black curves in Figure 1. Based on the discussion in Appendix \ref{app-regionII-highcrossing} and the analysis in Section 4 of \cite{Hur-Yang-capillary}, there also exists a region of positive measure between the blue and green curves in Figure 1 where the waves are spectrally stable.\footnote{We thank Zhao Yang for bringing this point to our attention.}

\begin{figure}[htbp]
    \centering
\includegraphics[width=0.6\linewidth,height=0.5\textheight, keepaspectratio]{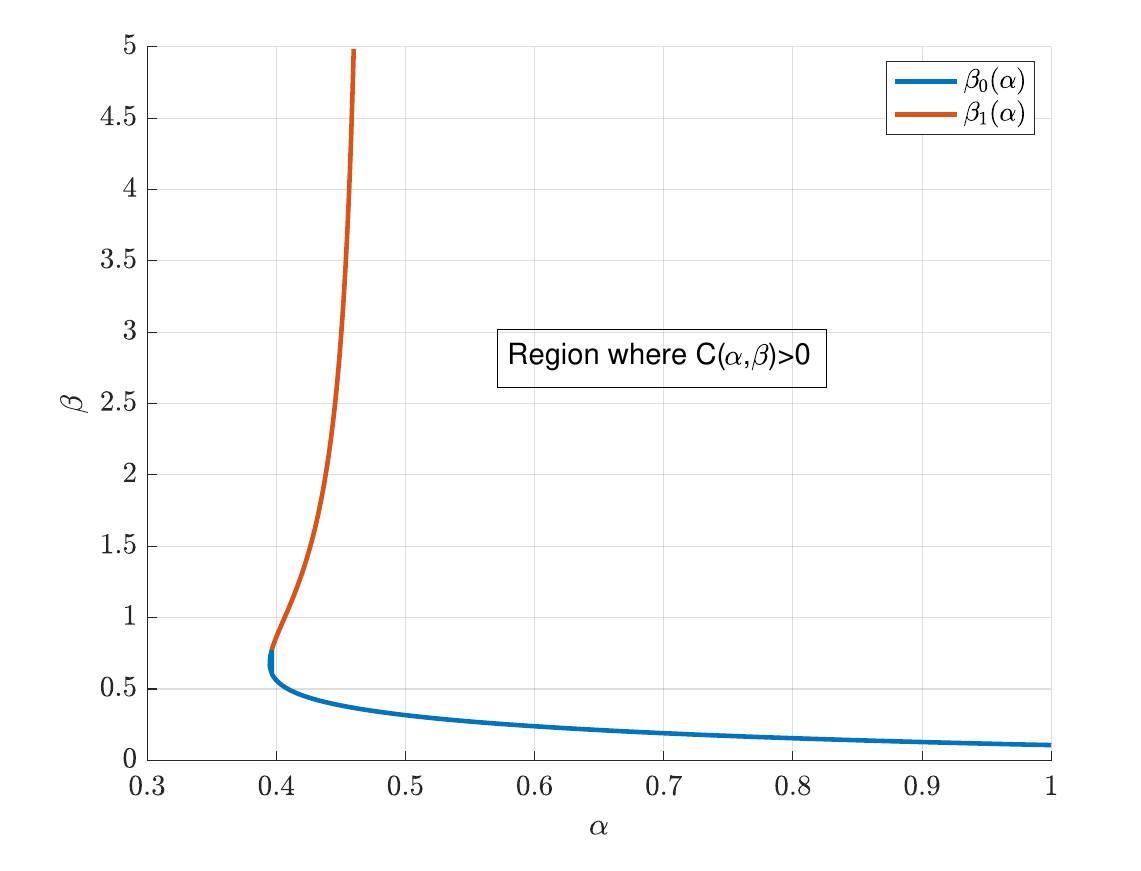}  
    \caption{The curve $\beta=\beta_0(\alpha)$ (in blue)  and 
    the curve $\beta=\beta_1(\alpha)$ (in red) are such that $C(\beta,\alpha(\beta))=0$. The red curve diverges to $+\infty$  as $\alpha$ approaches  $\alpha_1$.
    The region enclosed  by these two curves and lying above
    $\beta_0(\alpha)$ corresponds to the domain where $C(\alpha,\beta)>0,$ which is defined as $I_1$.}
    \label{fig:figure2}
\end{figure}

\subsection{Background on the stability of Stokes waves}\label{background-stokes}
The stability of Stokes waves—periodic traveling wave solutions of the gravity water wave equations—has been an important theme in the study of water waves.
A key phenomenon in this context is modulational instability, where long-wavelength perturbations can cause the wave train to destabilize. This effect, commonly referred to as the Benjamin–-Feir instability, was first observed by Benjamin and Feir through experiments and formal calculations in the 1960s \cite{benjamin1967disintegration}. 
A rigorous mathematical foundation was laid by Bridges and Mielke \cite{BM-BFins}, who analyzed the modulational stability of Stokes waves in finite depth using a Hamiltonian spatial dynamics framework and center-manifold reduction.
Recently, Nguyen and Strauss \cite{Nguyen-Strauss}
proved modulational instability of deep-water Stokes waves, a situation where the 
spatial center-manifold reduction cannot be employed.
In \cite{Hur-Yang-ARMA}, Hur and Yang re-examined the spectral instability of finite-depth Stokes waves using the Evans function approach. They identified a criterion for modulational instability and also detected instability at the next spectral crossing away from the origin. In a series of paper \cite{BMP-ARMA,BMV-cmp,BMP-deep,berti-highfreq}, Berti et al. investigated the stability of Stokes waves based on 
Bloch--Floquet theory by employing the 
analytic perturbation theory of Kato \cite{Kato}. We also refer to \cite{CDT-highfreq} for a formal perturbative analysis and numerical simulations concerning the high-frequency instabilities of Stokes waves. 
Recently, the stability of one-dimensional Stokes waves as solutions to the three-dimensional water-wave system—known as transverse stability—has attracted considerable research interest. See \cite{CNS-deep, CNS-finite} for studies on purely transverse perturbations (i.e., co-periodic perturbations along the longitudinal direction), and \cite{JRSY-cmp} for analyses involving both longitudinal and transverse perturbations.

All of the above results concern the \textit{spectral} (in-)stability of Stokes waves. To our knowledge, the only work so far addressing \textit{nonlinear} instability is due to Chen and Su \cite{chen-su}. By rigorously justifying the NLS approximation and leveraging the modulational instability of periodic NLS waves, they established the nonlinear instability of small amplitude Stokes waves under subharmonic perturbations (i.e., perturbations that are periodic with a period equal to an integer multiple of the Stokes-wave period), without relying directly on the spectral instability of the underlying Stokes waves. 

The study of 
periodic solutions of the capillary-gravity water-wave system is also interesting but has received relatively limited mathematical attention. Apart from the formal analyses presented in \cite{D-R-packets, A-Segur}  and the numerical investigations \cite{DT-numerics,TDW-wiltonripples},
the only existing mathematical proof to date is due to Hur and Yang \cite{Hur-Yang-capillary}, who employed the Evans function approach in the spirit of their earlier work on gravity waves. In this work, we aim to re-examine the problem by using  Kato's perturbation theory, which not only enables us to find 
the criterion for the modulational stability, 
but also enables us to rule out any potential instability near other spectral crossings away from the origin when $(\alpha,\beta)\in \rm I\cup III$. As a result, we obtain a global spectral stability result.

\subsection{Strategy of the proof}
In a broad sense, our work falls within the scope of stability studies of periodic waves for Hamiltonian systems, for which we can refer to 
\cite{Miguel-KdV,ARS-KdV,GL-NLS,Jonhson-fkdV,BHV-Floquettheory,NRS-EP,Corentin-Miguel}.
Since the operator \eqref{def-Lep} has periodic coefficients, one can apply the Bloch--Floquet transform to reduce the spectral analysis of 
 $L^{\vep}$ on the $L^2(\mR)-$based space $Y$ to the spectral study of a family of operators $L_{\xi}^{\vep}$ parametrized by the Floquet parameter $\xi\in (-\f12,\f12]$
on the $L^2(\mathbb{T}_{2\pi})$ $(\mathbb{T}_{2\pi}\coloneqq\mathbb{R}/2\pi\mathbb{Z})$
based
space $Y_{\per}$. Since each $L_{\xi}^{\vep}$ has compact resolvent, the spectrum of $L_{\xi}^{\vep}$ is composed of a family of discrete eigenvalues with finite multiplicities. Moreover, when $\vep=0$, the spectrum of each $L_{\xi}^0$ can be computed via the Fourier transform and lies entirely on the imaginary axis.
Inherited from the Hamiltonian structure \eqref{Hamiltonianstructure}, the spectrum of each $L_{\xi}^{\vep}$ remains symmetric with respect to the imaginary axis. As a result, any potential unstable spectrum of $L^{\vep}$ can only bifurcate from the multiple eigenvalues of $L_{\xi}^{0}$ (which we will refer to as the crossing spectrum) for certain values of the Floquet parameter $\xi$. It is shown in Lemma \ref{lem-crossing} that the multiple eigenvalues consist of
\begin{itemize}
    \item the zero  eigenvalue of $L_0^0$ with multiplicity four, 
    \item countably many other spectral crossings
away from the origin with multiplicity two (arising from the same branch of the spectral curve
when $(\alpha,\beta)\in \rm I\cup III$).
\end{itemize}
The task is then to study how the spectrum bifurcates from these crossing points as $\vep$
becomes non-zero but remains small. The general strategy is to construct a finite-dimensional representation matrix whose eigenvalues coincides with the spectrum of $L^{\vep}$ near these crossing points. This is achieved by constructing appropriate bases and dual bases for the associated eigenspaces. The latter is carried out by employing  Kato's perturbation theory, starting from the bases at $\vep=0$ or $\xi=0$, which can be obtained via the 
Fourier transform or by exploiting the structure of the background waves.

Let us first explain how to examine the spectral behavior near the crossing points away from zero, where the corresponding eigenspaces are two-dimensional. Assume that there is crossing spectrum $\lambda_0$ for the Floquet parameter $\xi_0$. When 
$|(\vep,\xi-\xi_0)|$ is small, 
we can construct the basis $\{q_1^{\vep}(\xi,\cdot), q_2^{\vep}(\xi,\cdot)\}$ by extending the basis $\{q_1^{0}(\xi,\cdot), q_2^{0}(\xi,\cdot)\}$ that is obtained by Fourier transform. Thanks to the Hamiltonian symmetry 
$(L_{\xi}^{\vep})^{*}=-J^{-1}L_{\xi}^{\vep}J$, 
one can construct the dual basis $\{\tilde{q}_1^{\vep}(\xi,\cdot), \tilde{q}_2^{\vep}(\xi,\cdot)\}$ from linear combinations of $\{\i Jq_1^{\vep}(\xi,\cdot), \i J q_2^{\vep}(\xi,\cdot)\}$. It results from the fact that 
$q_1^{0}(\xi,\cdot), q_2^{0}(\xi,\cdot)$ always involves different Fourier modes that the dual basis can be chosen as
\begin{align*}
\tilde{q}_j^{\vep}(\xi,\cdot)=\i \, \omega_{j}^{-1}\,  
J {q}_j^{\vep}(\xi,\cdot),
\qquad \mathrm{ with }\,\,\, \omega_{j}(\xi)=\langle \i \,  J {q}_j^{0}(\xi,\cdot), {q}_j^{0}(\xi,\cdot)\rangle, \,\,  j=1,2 .
\end{align*}
One important property in the current situation is that the associated Krein signatures—that is, the signs of $\alpha_1(\xi),\alpha_2(\xi)-$are always the same.  This fact, combined with the relation  $L_{\xi}^{\vep}=JA_{\xi}^{\vep}$, where $A_{\xi}^{\vep}$
is a symmetric operator, allows us to deduce that the representation matrix takes the form
\begin{align}\label{repmatrix-high}
    D_{\xi}^{\vep}=\,\left(\langle \tilde{q}_j^{\eps}(\xi,\cdot), L_{\xi}^{\vep}q_{\ell}^{\eps}(\xi,\cdot)\rangle\right)_{1\leq j,\ell\leq 2}=\i P_{\xi} \tilde{D}_{\xi}^{\vep} P_{\xi}^{-1}
\end{align}
 where $P_{\xi}$ is an invertible matrix and 
$\tilde{D}_{\xi}^{\vep} $ is a real symmetric matrix. 
Here and throughout the paper, $\langle \cdot, \cdot \rangle$ denotes the canonical inner product on $L^2(\mathbb{T}_{2\pi})$, which is conjugate-linear in its first argument.
It follows from \eqref{repmatrix-high} that the eigenvalues of $D_{\xi}^{\vep}$, hence the spectrum of $L_{\xi}^{\vep}$ near $\lambda_0$, must stay on the imaginary axis.  Consequently, no instability arises from any of the spectral crossings away from zero. This aligns with the classical understanding in Hamiltonian PDE theory that the collision of purely imaginary eigenvalues with identical Krein signatures typically does not result in spectral instability, see \cite{KKS-Krein}. 
Let us remark that such a procedure for rigorously proving stability in this setting was also observed in an earlier work by the first author and collaborators, in the context of periodic waves for the Euler--Poisson system \cite{NRS-EP}.  In that case, however, the Krein signatures are always opposite, leading instead to a conclusion of instability.

We are now in a position to outline the strategy for analyzing the spectrum of $L^{\vep}$ near zero, which corresponds to the modulational stability of periodic waves. Since zero is a spectral crossing of 
 $L_0^0$ with multiplicity four, the resulting matrix will be  $4\times 4$.
 To construct the four basis vectors, one can again start with the eigenfunctions of $L_{\xi}^0$  
 and extend them to the case $\vep\neq0$. However, this approach requires computing the expansions of 
 these basis functions in terms of the amplitude $\vep$ (at least to the second order), making it computationally demanding. Therefore, we instead begin with the basis of $L_0^{\vep}$, whose spectrum near the origin consists solely of $\{0\}$. The associated eigenfunctions can be obtained using the profile equations and the Lyapunov–-Schmidt reduction (see Proposition \ref{prop-kernel}), and are then extended to form a basis for $L_{\xi}^{\vep}$
with small nonzero $\xi$ by  Kato's perturbation theory. The dual bases are then found from linear combinations of $\{\i Jq_1^{\vep}(\xi,\cdot), \i J q_2^{\vep}(\xi,\cdot), \i  Jq_3^{\vep}(\xi,\cdot), \i Jq_4^{\vep}(\xi,\cdot)\}$.
With these in hand, we find the rather explicit expression of the representation matrix
\begin{align*}
    \rD_{\xi}^{\vep}=\,\left(\langle \tilde{q}_j^{\vep}(\xi,\cdot), L_{\xi}^{\vep}q_{\ell}^{\vep}(\xi,\cdot)\rangle\right)_{1\leq j,\ell\leq 4}
    \, .
\end{align*}
To compute the eigenvalues of this matrix, we further exploit the symmetries and parity properties of the wave profiles to derive useful information about its entries. The first expression we obtain is
\begin{align}\label{D-firstexp}
    \rD_{\xi}^{\vep}=\bpm  * \,\i \kpa \xi\, & * & *\,\i \kpa \xi\,\vep  & * \, 
    \vep \\
    * \, (\kpa\xi)^2& * \,\i \kpa \xi & * (\kpa\xi)^2\vep& * \,\i \kpa \xi\,\vep \\ 
 *\,\i \kpa \xi\,\vep & *\,\vep & *  \,\i \kpa \xi & * \\
 * (\kpa\xi)^2\vep & * \,\i \kpa \xi\,\vep & * (\kpa\xi)^2  & *  \,\i \kpa \xi
    \epm 
\end{align}
where $*$ denotes certain \textbf{real} numbers depending smoothly on $\xi$, $\vep$. Motivated by this expression, we anticipate that the eigenvalues of 
$\rD_{\xi}^{\vep}$ are proportional  to $\i\kpa\xi$. This observation prompts the  transformation 
    \begin{align}\label{def-newmatrix-1}
   \mathrm {M}_{\xi}^{\vep}=\frac{1}{\i \,\kpa \, \xi}\rP_{\xi}  \rD_{\xi}^{\vep} \rP_{\xi}^{-1}
   \coloneqq \bpm  \rM_{11} & \vep \rM_{12} \\
    \vep \rM_{21} & \rM_{22}\epm (\vep,\xi), 
\end{align}
where $\rP_{\xi}=\diag\, (\i\,\kpa \xi, 1, \i\, \kpa\xi, 1)$ and $\rM_{j\ell} \,(1\leq j,\ell\leq 2)$ are $2\times 2$ matrices with real coefficients which are smooth in $\xi$ and $\vep$. As is readily seen,  $\sigma(\rD_{\xi}^{\vep})=\i \, \kpa\xi \,\sigma(\rM_{\xi}^{\vep})$, and the problem thus reduces to finding the eigenvalues of the real matrix 
$\rM_{\xi}^{\vep}$, which represents the crucial part of the analysis. Since the off-diagonal $2\times 2$ blocks are of order $\cO(\vep)$, one can expect the eigenvalues of $\rM_{\xi}^{\vep}$ to be close to those of $\rM_{11}$ and $\rM_{22}$. A direct calculation shows that the two eigenvalues of  $\rM_{22}$ remain real, provided $(\vep,\xi)$ is sufficiently small. The eigenvalues of $\rM_{11}$, 
 on the other hand, possess nonzero imaginary parts when $|\xi|$ is much smaller than $\vep$ and their size is of order $\cO(\vep)$. Consequently, the presence of non-vanishing off-diagonal terms can potentially render all eigenvalues of $\rM_{\xi}^{\vep}$ real. 
The idea to get the full description of the eigenvalues of  $\rM_{\xi}^{\vep}$ is to introduce some transformation to make one of the off-diagonal 
$2\times 2$ block $\vep\rM_{12}$ or $\vep\rM_{21}$ vanish, whose existence is ensured 
as long as the eigenvalues of $\rM_{11}$ and $\rM_{22}$ are well-separated (
i.e. $e_{*}\neq \sqrt{\alpha}$), which is the case when $(\alpha,\beta)\in \rm I \cup III$; 
see Proposition \ref{prop-separation} and 
Remark \eqref{rmk-separation-eigen}. More precisely, we first perform a transformation to make the block $\rM_{22}$ diagonal and then find another transformation to make $\vep \rM_{12}$ vanish, which is proven to exist by applying the inverse function theorem. Finally, we arrive at the transformed matrix 
\begin{align*}
\bpm \rM_{11}-\vep^2 \rQ \rG^{-1}\rM_{21} & 0\\ \vep \rG^{-1}\rM_{21} & \bpm X_2^{+} & \\ &  X_2^{-} \epm +\vep^2 \rG^{-1}\rM_{21}\rQ \epm 
\end{align*}
where $X_2^{+}, X_2^{-}$ are two positive real eigenvalues of $\rM_{22}$, and $G$ and $Q$ are  $2\times 2$ real matrices. It is now clear that the eigenvalues of the above matrix consist of the real values $X_{2}^{\pm}+\cO(\vep^2)$
and the two eigenvalues of the matrix $\rM_{11}-\vep^2 \rQ \rG^{-1}\rM_{21}$. These latter two are purely real when the quantity $w_1''(0)C(\alpha,\beta)$, defined in \eqref{criteria}, is positive and they acquire nonzero imaginary parts 
of order $\cO(|\vep\xi|)$ when $|\xi|\ll |\vep|$.

Our proof of modulational stability differs  from \cite{BMP-ARMA}, where Kato's perturbation method is 
employed to study the stability of gravity Stokes waves.
In \cite{BMP-ARMA}, 
 the construction of the basis begins with the operator $L_0^0$ which is then extended to a basis for $L_{\xi}^{\vep}$ for small nonzero values of $\xi$ and $\vep$. Since the Kato extension operator preserves the symplectic inner product, the dual basis can be conveniently constructed via  
\beq\label{intro-dual-expample}
\tilde{q}_j^{\vep}(\xi,\cdot)=\i Jq_j^{\vep}
(\xi,\cdot),\,\qquad  j=1,2,3,4.
\eeq
The subsequent spectral analysis proceeds by performing three steps of Bloch diagonalization on the resulting representation matrix. In contrast, our method directly exploits the structure of the background waves to construct a basis for $L_0^{\vep}$ which we then extend to a basis for $L_{\xi}^{\vep}$ for small $\xi\neq 0$. This approach simplifies the computation by avoiding the need to explicitly compute the Kato extension in terms of the amplitude parameter. The trade-off, however, is that our initial basis $\{q_j^{\vep}(0,\cdot) \} $ does not satisfy perfect orthogonality with respect to the symplectic inner product, and therefore, the dual basis cannot be expressed simply as in \eqref{intro-dual-expample}.
Instead, it must be constructed as a linear combination of $\{Jq_j^{\vep}(\xi,\cdot)\}_{j=1,2,3,4}$. Fortunately, we find that the dual basis $\tilde{q}_j(\xi,\cdot)$ is a small perturbation of $Jq_j^{\vep}$ (see \eqref{def-dualbasis}, \eqref{expan-aij}),
which suffices for our purposes. With the bases and dual bases in hand, we derive the representation matrix as in expression \eqref{D-firstexp}. By observing that the eigenvalues of the diagonal blocks $\rM_{11}$ and $\rM_{22}$ are uniformly separated, we can find directly a similarity transformation to make one of the  off-diagonal blocks vanish and thus facilitate the subsequent spectral study. It is also worth noting that the spectral curve near the origin can be readily identified from the explicit expressions for the eigenvalues (see \eqref{spectrumrelation} and \eqref{realeigen-up}) of the representation matrix\footnote{After the completion of this work, another preprint\cite{Hsiao-Maspero} appeared, deriving the modulational (in)stability and the spectral curve near the origin using an approach similar in spirit to \cite{BMP-ARMA}. }, see Remark \ref{rmk-specurve}.
\\

\noindent\textbf{Notation.} 
For simplicity of notation, we use the following conventions throughout the paper: 
\begin{equation*}
    \sh (\cdot)\coloneqq \sinh(\cdot), \qquad  \ch (\cdot)\coloneqq \cosh(\cdot), \qquad \tah(\cdot)\coloneqq \tanh(\cdot).
\end{equation*}

\section{Spectral preliminaries}

\subsection{Bloch transform}
To analyze the spectrum of operators  with periodic coefficient, it is expedient to introduce Bloch symbols, associated with the (Floquet-)Bloch transform. 

In a similar spirit to the inverse Fourier transform, the inverse Bloch transform provides a way to reconstruct a function from its Bloch components. For any $f\in L^1(\mathbb{R})$ such that $\hat f\in L^1(\mathbb{R})$, it holds that
\begin{align}\label{def-bloch}
\displaystyle
    f(x)=\int_{-1/2}^{1/2} e^{ix\xi} \check{f}(\xi,x)\,\d \xi\, ,
\end{align}
where the direct Bloch transform of $f$, $\cB(f)=\check{f}$, is defined as
\begin{align}\label{def-Bloch}
\cB(f)(\xi,x)=\check{f}(\xi,x)\coloneqq\sum_{j\in \mathbb{Z}} e^{\i\, j\,x } \hat{f}(\xi+j)
.
\end{align}
Note that by design, for each $\xi$, $\check{f}(\xi,\cdot)$ is periodic of period $2\pi$, so that $f$ is written as an integral of functions $g_\xi\,=\,e^{i\xi\,\cdot} \check{f}(\xi,\cdot)$ satisfying $g_\xi(x+2\pi)=e^{i\xi}\,g_\xi(x)$. Such functions are known as Bloch waves, the number $e^{i\xi}$ being a Floquet multiplier, and $\xi$ is classically called Floquet exponent. 

With the help of the inverse Bloch transform \eqref{def-bloch}, one can turn an operator with periodic coefficients acting on a function over $\mR$ into a family of operators acting on periodic functions. Explicitly, it holds that, 
\begin{align}
L^{\vep}(u)(x)=\int_{-1/2}^{1/2} e^{ix\xi} L_{\xi}^{\vep}(\check{u}(\xi, \cdot))(x) \d \xi
\end{align}
where each $L_\xi^\vep\coloneqq L^{\vep}(\p_x+i\xi)$ acts on $Y_{\per}=H^{1}(
\mathbb{T}_{2\pi})\times H^{\f12}(\mathbb{T}_{2\pi})$ with domain $H^{\f52}(\mathbb{T}_{2\pi})\times H^{2}(\mathbb{T}_{2\pi})$. The main gain when replacing the direct analysis of $L^\vep$ with those of $L_\xi^\vep$ is that each $L_\xi^\vep$ has compact resolvent and thus their spectrum is  composed of eigenvalues of finite multiplicity. A key related observation is that
\begin{align*}   \sigma(L^{\vep}|_Y)=\bigcup_{\xi\in[-1/2,1/2]}\sigma(L_{\xi}^{\vep}|_{Y_{\per}}).
\end{align*}
It is also important to notice that the Hamiltonian symmetry is inherited by $L_{\xi}^{\vep}$,
\begin{align}\label{HS-Lxi}
    (L_{\xi}^{\vep})^{*}=-J L_{\xi}^{\vep} J^{-1},
\end{align}
and thus its spectrum is again symmetric with respect to the imaginary axis. 


\subsection{
Study of the constant operator $L_{\xi}^0$}
By definition,
\begin{align*}
   L_{\xi}^{\vep}= \left( \begin{array}{cc}
   k_{\vep}(\p_x+i\xi)(d_{\vep}\cdot)  &  G_{\xi}[\zeta_{\vep}]\cdot   \\[5pt]
 P_{\xi}[\zeta_{\vep}]-  w_{\vep}& k_{\vep}d_{\vep}(\p_x+i\xi)
\end{array}\right),
\end{align*}
where 
\begin{align}\label{def-GP}
    G_{\xi}[\zeta_{\vep}]=e^{\i \,x \,\xi}G[\zeta_{\vep}]e^{-\i\, x\,\xi}; \quad P_{\xi}[\zeta_{\vep}]=e^{\i \,x\,\xi}P[\zeta_{\vep}]e^{-\i\, x\,\xi}
\end{align}
are operators acting on periodic functions.
Specifically, when $\vep=0$,
\begin{align*}
     L_{\xi}^{0}= \left( \begin{array}{cc}
   \kappa(\p_x+i\xi) &  |\kappa(\p_x+i\xi)| \tah (|\kappa(\p_x+i\xi)| ) \\[5pt]
 \beta\kappa^2(\p_x+i\xi)^2- \alpha& \kappa (\p_x+i\xi)
\end{array}\right)\, .
\end{align*}
Being an operator with constant coefficients, its spectrum can be found explicitly by using the Fourier transform:
\beq\label{defspec-0}
\begin{aligned}
   &\sigma(L_{\xi}^0)= \bigcup_{j\in \mathbb{Z}} \lambda_{j}^{\pm}(\xi); \qquad \lambda_{j}^{\pm}(\xi)=\i \big( \kappa (j+\xi)\pm w_j(\xi) \big),\\
   &\text{where } w_j(\xi)\coloneqq \sqrt{\kappa(j+\xi) \tah (\kappa (j+\xi)) \big(\alpha+\beta\kappa^2 (j+\xi)^2\big) }.
\end{aligned}
\eeq
The spectrum of $L_{\xi}^0$ is thus contained in the imaginary axis. By the Hamiltonian symmetry \eqref{HS-Lxi},
the simple eigenvalues of $L_{\xi}^{0}$ remain on the imaginary axis for small $\vep>0$ and possible unstable spectrum bifurcates only from  multiple eigenvalues of $L_{\xi}^0$ when $\vep>0$ is sufficiently small. Therefore, we need to find all the crossing spectrum of $L_{\xi}^0$ with $\xi\in(-\f12,\f12]$.

\begin{lem}[Crossing of $L_{\xi}^0$] \label{lem-crossing}
Let 
$(\alpha,\beta)\in \rm I \cup III$, where
$\rm I, III$ are defined in \eqref{region-I-III}.
The following facts hold:

 \textnormal{(1) [Crossing at the origin].}
The origin is an eigenvalue of $L_0^0$ of algebraic multiplicity four: 
\begin{align}\label{crossing-0}
    \lambda_{-1}^{+}(0)=\lambda_{1}^{-}(0)= \lambda_{0}^{+}(0)= \lambda_{0}^{-}(0)=0.
\end{align}

  \textnormal{(2)  [Crossing away from the origin].} There are 
countably many other spectral crossings of multiplicity two, which occur when
\begin{align}\label{othercrossing}
   \lambda_{j_{\ell}}^{+}(\xi_{\ell})=\lambda_{j'_{\ell}}^{+}(\xi_{\ell})\coloneqq i\sigma_{\ell}; \qquad \lambda_{-j_{\ell}}^{-}(-\xi_{\ell})=\lambda_{-j_{\ell}'}^{-}(-\xi_{\ell})\coloneqq -i\sigma_{\ell}  \qquad
\end{align}
where $ \xi_{\ell}\in (-\f12,\f12],\, j_{\ell}\geq 0, \, j'_{\ell}<0,\,\, \ell=j_{\ell}-j_{\ell}'\geq 2$. Moreover, if $j_{\ell}=0$, it holds that $\xi_{\ell}>0$.
\end{lem}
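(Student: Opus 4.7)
The plan is to work directly with the explicit spectrum \eqref{defspec-0} and exploit the following consequence of the dispersion relation \eqref{disp-relation}. Writing $w(y) := \sqrt{\kpa|y|\tah(\kpa|y|)(\alpha + \beta\kpa^2 y^2)}$ (so that $w_j(\xi) = w(j+\xi)$) and $\phi(y) := y\coth(y) - \alpha - \beta y^2$, one has the algebraic identity
\[ w(y)^2 - \kpa^2 y^2 \;=\; -\kpa|y|\tah(\kpa|y|)\,\phi(\kpa|y|). \]
In $\rm I\cup\rm III$, $\kpa$ is the unique positive root of $\phi$, with $\phi>0$ on $(0,\kpa)$ and $\phi<0$ on $(\kpa,+\infty)$, so the above identity yields the sign comparison $w(y)\lessgtr \kpa|y|$ as $|y|\lessgtr 1$, with equality only at $|y|\in\{0,1\}$. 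Part (1) follows at once: $\lambda_0^{\pm}(0)=0$ since $w_0(0)=0$, and $w_{\pm 1}(0)^2=\kpa\tah(\kpa)(\alpha+\beta\kpa^2)=\kpa^2$ gives $\lambda_1^-(0)=\lambda_{-1}^+(0)=0$; conversely, for $j\neq 0$, $\lambda_j^\pm(0)=0$ forces $\alpha + \beta\kpa^2 j^2 = \kpa|j|\coth(\kpa|j|)$, i.e., the dispersion relation at $\kpa|j|$, so $|j|=1$ by uniqueness of the positive root.

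For Part (2), I would first use the symmetry $\lambda_{-j}^{-}(-\xi)=-\lambda_j^{+}(\xi)$ (from evenness of $w$) to reduce crossings within $\lambda^-$ branches to crossings within $\lambda^+$ branches, and then split the remaining analysis into \emph{same-sign} crossings $\lambda_j^{+}(\xi)=\lambda_{j'}^{+}(\xi)$ and \emph{mixed-sign} crossings $\lambda_j^{+}(\xi)=\lambda_{j'}^{-}(\xi)$. With $a=j+\xi$, $b=j'+\xi$ and $a-b\in\mathbb{Z}\setminus\{0\}$, these become $g_+(a)=g_+(b)$ (with $g_+(y):=\kpa y+w(y)$) and $w(a)+w(b)=\kpa(b-a)$ respectively. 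For the mixed-sign case, a subcase analysis on the positions of $a,b$ relative to $\pm 1$, driven by $w(y)\lessgtr\kpa|y|$, yields strict inequality $w(a)+w(b)\neq\kpa(b-a)$ unless $\{|a|,|b|\}\subset\{0,1\}$, which forces $\xi=0$ and places the configuration among the origin-crossing pairs of Part (1). For the same-sign case, the comparison gives $g_+(0)=g_+(-1)=0$, $g_+<0$ on $(-1,0)$, $g_+>0$ elsewhere; combined with strict monotonicity of $w^2$ on each of $(-\infty,0]$ and $[0,+\infty)$ (by direct differentiation), $g_+$ is strictly increasing on $[0,+\infty)$ and has a unique global minimum $m_-<0$ at some $y^*\in(-1,0)$ on $(-\infty,0]$. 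Hence each level $v>0$ has exactly two preimages $y_+(v)>0$ and $y_-(v)<-1$, and the separation $\ell(v):=y_+(v)-y_-(v)$ is continuous and strictly increasing (since $dy_+/dv=1/g_+'(y_+)>0>1/g_+'(y_-)=dy_-/dv$), with $\lim_{v\to 0^+}\ell(v)=1$ and $\lim_{v\to+\infty}\ell(v)=+\infty$. For each integer $\ell\geq 2$ there is therefore a unique $v_\ell$, and writing $y_+(v_\ell)=j_\ell+\xi_\ell$ with $\xi_\ell\in(-1/2,1/2]$ defines $j_\ell\in\mathbb{Z}$; then $j_\ell'=j_\ell-\ell$, and the strict inequalities $y_+(v_\ell)>0>-1>y_-(v_\ell)$ give $j_\ell\geq 0>j_\ell'$, with $\xi_\ell>0$ when $j_\ell=0$. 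Multiplicity two then follows from the two-element level set together with the mixed-sign exclusion.

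The step I expect to be the main obstacle is the mixed-sign exclusion in the transition regime $|a|<1<|b|$, for instance $\ell=2$ with $a\in(-1,0)$ and $b=a+2\in(1,2)$, where the two terms in $w\lessgtr\kpa|\cdot|$ have opposite signs and cannot be compared by pointwise bounds alone. Here a more quantitative argument is needed---such as a second-order Taylor expansion at $s=0$ showing that $s\mapsto w(1-s)+w(1+s)-2\kpa$ has a strict local minimum at the origin in $\rm I\cup III$, combined with the asymptotic growth $w(y)\sim\sqrt{\beta\kpa^3}\,|y|^{3/2}$ as $|y|\to\infty$ to rule out large-$\ell$ mixed-sign configurations.
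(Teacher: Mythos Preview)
Your framework is essentially the same as the paper's---analyze the explicit branches via the function $g_+(y)=\kpa y+w(y)=\sigma_+(\kpa y)$, use the sign comparison $w(y)\lessgtr\kpa|y|$ coming from the dispersion relation, and split into same-sign and mixed-sign collisions. Part~(1) is fine. The difficulties you run into in Part~(2) are real, and in both cases the missing input is the same pair of nontrivial analytic facts about $\sigma_-$ that the paper invokes from \cite[Lemma 3.5]{Hur-Yang-capillary} and \cite{Dull-ARMA}.

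\textbf{Same-sign case.} Your assertion that $g_+$ has a \emph{unique} global minimum on $(-\infty,0]$ does not follow from ``strict monotonicity of $w^2$''. On $(-\infty,0]$ you have $g_+(y)=\kpa y+w(|y|)$ with the two terms pulling in opposite directions; monotonicity of $w$ alone gives nothing. What you actually need is that $g_+'(y)=\kpa-w'(|y|)$ vanishes exactly once for $y<0$, i.e.\ that $w'(s)=\kpa$ has a unique positive root. Since $\sigma_-(k)=k-w(k/\kpa)$, this is precisely the statement that $\sigma_-$ has a unique critical point $\kpa_c\in(0,\kpa)$, which is the cited lemma. Without it you cannot exclude same-sign crossings with both indices negative (both preimages in $(-\infty,-1)$), nor guarantee multiplicity two.

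\textbf{Mixed-sign case.} You correctly flag the transition regime $|a|<1<|b|$ as the hard step. Your proposed fix---a Taylor expansion of $s\mapsto w(1-s)+w(1+s)-2\kpa$ at $s=0$ plus asymptotic growth---only controls neighborhoods of $s=0$ and $s\to\infty$, leaving the intermediate range open. The paper closes this with the second cited fact: $\sigma_-$ is \emph{strictly concave} on $[\kpa_c,+\infty)$. Then for $k_1\in[\kpa_c,\kpa)$ one gets directly
\[
\sigma_+(k_1-\ell\kpa)=-\sigma_-(\ell\kpa-k_1)\geq -\sigma_-(2\kpa-k_1)>\sigma_-(k_1),
\]
the last strict inequality being $\sigma_-(k_1)+\sigma_-(2\kpa-k_1)<2\sigma_-(\kpa)=0$ from concavity about the midpoint $\kpa$. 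The remaining range $k_1\in(0,\kpa_c)$ is then handled by monotonicity. So the concavity replaces your local expansion with a global inequality in one stroke.
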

\begin{proof}
Let
$\sigma_{\pm}(k)=k\pm\sqrt{k\tah(k)(\alpha+\beta k^2)}$. 
It can be shown that, when $(\alpha,\beta)\in \rm I\cup III$,   the function $\sigma_{-}(k)$ 
has only one critical point $\kappa_c \in (0,\kappa)$, and is thus strictly increasing on 
$(-\infty,\kappa_c]$ and strictly decreasing on  
$[\kappa_c,+\infty)$,  with $\sigma_{-}'(k)<0$.
Moreover, $\sigma_{-}(k)$ is strictly concave with $\sigma_{-}''(k)<0$ on $[\kappa_c,+\infty)$. We refer for instance to \cite[Lemma 3.5]{Hur-Yang-capillary} for a proof  (see also \cite[p.~865, (205)--(207)]{Dull-ARMA}). 
It follows from the identity $\lim_{k\rightarrow \pm \infty}\sigma_{-}(k)=-\infty$ that $\sigma_{-}(k)<0$ for $k\in(-\infty,0)\cup (\kappa,+\infty)$ and 
$\sigma_{-}(k)\geq 0$ for $k\in [0,\kappa]$.
One gets the corresponding property for $\sigma_{+}(k)$ from the relation 
$\sigma_{+}(k)=-\sigma_{-}(-k)$. 
This proves the crossing \eqref{crossing-0} and that $\lambda^\pm_j(\xi)\ne 0$ for all other $j$ and $\xi$. Note that $j=\pm 1$ give rise to one eigenvector each, whereas for $\xi=0$ and $j=0$ (i.e., constant functions) 
the action of $L_0^0$ is given by
  $$  \left( \begin{array}{cc}
   0 &  0 \\[5pt]
 -\alpha & 0
\end{array}\right),$$
resulting in the eigenvector $(0,1)^t$ and the generalized eigenvector $(\frac1{\alpha}, 0)^t$.



 To study the crossings away from the origin, note that for any $k\in( 0, \kpa)$, it holds that
$\sigma_{+}(k)>\sigma_{-}(k), 
$ and $\sigma_{-}(k)\sigma_{+}(k-\kpa)<0$
and thus there is no crossing $\lambda_{j}^{+}(\xi)=\lambda_{j'}^{-}(\xi)$ 
for $0\neq \xi\in (-1/2,1/2]$ and $|j-j'|\leq 1$. 
From the concavity 
of $\sigma_{-}(k)$ on $[\kappa_c,+\infty)$, we also derive that there is no $k_1, k_2$ such that 
$k_1\in(0,\kpa), k_1-k_2\geq \ell \kpa\, (\ell \geq 2)$ and $\sigma_{-}(k_1)=\sigma_{+}(k_2)$. This, together  with the fact $\sigma_{+}(k)=-\sigma_{-}(-k)$, will enable us to exclude the possibility of $\lambda_{j}^{+}(\xi)=\lambda_{j'}^{-}(\xi)$ with $|j-j'|\geq 2$.
Indeed, on the one hand,
if $k_1\in [\kpa_c, \kpa)$, then 
\begin{align*}
    \sigma_{+}(k_1-\ell\kpa)=- \sigma_{-}(\ell\kpa-k_1)\geq -\sigma_{-}(2\kpa-k_1)> \sigma_{-}(k_1),
\end{align*}
where in the last inequality we have used the strict concavity of $\sigma_{-}(k)$ when $k\geq \kappa_c$.
On the other hand, if $k_1\in (0,k_c)$, we have by using the facts that 
$-\sigma_{-}(k)$ is strictly increasing on $[\kappa_c,+\infty)$ and $\sigma_-(k)$ is strictly increasing on $(-\infty, \kappa_c]$ that
\begin{align*}
    \sigma_{+}(k_1-\ell\kpa)=- \sigma_{-}(\ell\kpa-k_1) >-\sigma_{-}(\ell\kpa-\kappa_c) 
    \geq -\sigma_{-}(2\kpa-\kappa_c)> \sigma_{-}(\kappa_c)>\sigma_{-}(k_1).
\end{align*}

The above facts indicate that the non-zero crossing  spectrum, if it exists, must arise from the same spectral curve, that is \eqref{othercrossing}. 
Moreover, from the sign and monotonicity properties of $\sigma_{\pm}$, it follows that $jj'\leq 0$ and $|j-j'|\geq 2$. 
 Note that all these crossings are bounded away from the origin by a uniform distance.
\end{proof}

\begin{figure}[htbp]
    \centering
\includegraphics[width=0.5\linewidth,height=0.4\textheight, keepaspectratio]{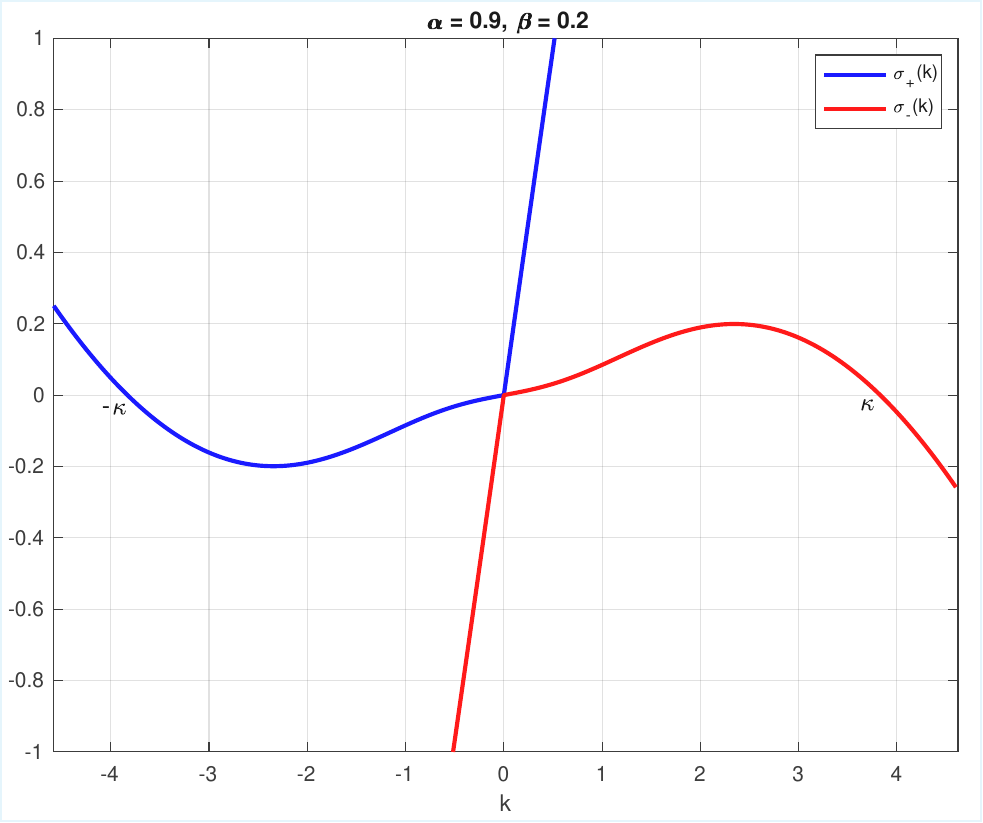}  
    \caption{The picture of curves $\sigma_{+}(k)$ (in blue) and $\sigma_{-}(k)$ (in red) when $\alpha=0.9,\beta=0.2$. }
    \label{fig:figure3}
\end{figure}

\section{Exclusion of  unstable spectrum with large imaginary part}

In this section, we aim to show that the spectrum of 
$L^{\vep}$ lies on the imaginary axis when the imaginary part is large enough. 

\begin{thm}\label{thm-exclusionveryhigh}
There exists $\tilde{\vep}_1>0$ and a constant $M \in (1,+\infty)$ such that for any $|\vep|\le \tilde{\vep}_1$ and any $\xi\in(-\frac{1}{2},\f12]$ it holds that 
\begin{align*}
\sigma(L_{\xi}^{\vep}) \, \cap \{\lambda : |\Im \lambda|\geq M\} \subset \i \mR.
\end{align*}
\end{thm}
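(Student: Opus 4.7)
The plan is to combine the Hamiltonian factorization $L_\xi^\vep = JA_\xi^\vep$ from \eqref{relation-imp}, in which $A_\xi^\vep$ is self-adjoint, with a uniform high-frequency positivity estimate on the quadratic form $V\mapsto \langle A_\xi^\vep V, V\rangle$. Rewriting $L_\xi^\vep V = \lambda V$ as $A_\xi^\vep V = -\lambda JV$ and pairing with $V$ yields
\begin{equation*}
\langle A_\xi^\vep V, V\rangle \,=\, -\lambda\,\langle JV, V\rangle.
\end{equation*}
The left-hand side is real because $A_\xi^\vep$ is self-adjoint; the right-hand factor $\langle JV, V\rangle = i\mu$ is purely imaginary because $J$ is skew-adjoint. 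Writing $\lambda = a + ib$ and comparing imaginary parts gives $a\mu = 0$. It therefore suffices to prove that $\langle A_\xi^\vep V, V\rangle \neq 0$ for every eigenvector $V$ attached to an eigenvalue with $|\Im\lambda|\geq M$, since this forces $\mu\neq 0$ and hence $\Re\lambda = 0$.

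The second step is a uniform high-frequency coercivity bound on $A_\xi^\vep$. Using the definition \eqref{def-P} of $P_\xi[\zeta_\vep]$ and integrating by parts, the quadratic form becomes
\begin{equation*}
\langle A_\xi^\vep V, V\rangle = \int \frac{\beta\,|k_\vep(\p_x+i\xi)V_1|^2}{(1+(k_\vep\p_x\zeta_\vep)^2)^{3/2}} + \int w_\vep |V_1|^2 + \langle G_\xi[\zeta_\vep]V_2, V_2\rangle + 2\,\Re\!\int k_\vep(\p_x+i\xi)(d_\vep V_1)\,\overline{V_2}.
\end{equation*}
At $\vep=0$, evaluated on the Fourier mode $j$, this reduces to $(\beta\kpa^2(j+\xi)^2+\alpha)|\hat V_1|^2 + |j+\xi|\tah(|j+\xi|)|\hat V_2|^2 - 2\kpa(j+\xi)\Im(\hat V_1\overline{\hat V_2})$, a quadratic form in $(|\hat V_1|,|\hat V_2|)$ whose discriminant is negative as soon as $|j+\xi| > 1/\beta$, hence strictly positive definite. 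Since the coefficients $d_\vep, k_\vep, w_\vep, \zeta_\vep$ are $O(\vep)$ perturbations of their $\vep=0$ values, the same pointwise computation, after possibly enlarging the threshold $K_0=K_0(\alpha,\beta)$, delivers
\begin{equation*}
\langle A_\xi^\vep V, V\rangle \,\geq\, c_0\bigl(\|V_1\|_{H^1}^2 + \|V_2\|_{H^{1/2}}^2\bigr),
\end{equation*}
uniformly in $|\vep|\leq \tilde\vep_1$ and $\xi\in(-\tfrac12,\tfrac12]$, for every $V\in Y_{\per}$ whose Fourier support lies in $\{|j|\geq K_0\}$.

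The third step is the concentration of eigenvectors at high frequencies. Let $\Pi^\ell$, $\Pi^h = I-\Pi^\ell$ denote the Fourier projectors onto $\{|j|<K_0\}$ and $\{|j|\geq K_0\}$, and decompose $V = V^\ell + V^h$. Since $L_\xi^0$ is diagonal in Fourier, projecting $L_\xi^\vep V = \lambda V$ yields
\begin{equation*}
(\lambda - \Pi^\ell L_\xi^\vep\Pi^\ell)V^\ell \,=\, \Pi^\ell(L_\xi^\vep - L_\xi^0)V^h.
\end{equation*}
Because the profile-dependent contributions to $L_\xi^\vep - L_\xi^0$ (entering $G_\xi[\zeta_\vep]-G_\xi[0]$, $P_\xi[\zeta_\vep]-P_\xi[0]$, $w_\vep-\alpha$, $d_\vep-1$) are smooth of size $O(\vep)$ with rapidly decaying Fourier coefficients, one obtains the off-diagonal bound $\|\Pi^\ell(L_\xi^\vep-L_\xi^0)V^h\|_{Y_{\per}} \lesssim \vep\,\|V^h\|_{Y_{\per}}$. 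Meanwhile $\Pi^\ell L_\xi^\vep\Pi^\ell$ acts on a fixed finite-dimensional subspace with uniformly bounded norm, so for $|\Im\lambda|\geq M$ sufficiently large the resolvent satisfies $\|(\lambda-\Pi^\ell L_\xi^\vep\Pi^\ell)^{-1}\|\lesssim 1/M$. Together these yield $\|V^\ell\|_{Y_{\per}}\leq C\vep/M\,\|V^h\|_{Y_{\per}}$. Choosing $M$ large and $\tilde\vep_1$ small so that $C\vep/M\leq 1/2$, we conclude $\|V\|_{Y_{\per}}\sim\|V^h\|_{Y_{\per}}$; the coercivity of the second step then forces $\langle A_\xi^\vep V, V\rangle > 0$ after absorbing the small cross contributions involving $V^\ell$, and the first step closes the argument.

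The main technical hurdle is the operator-norm bound $\|\Pi^\ell(L_\xi^\vep-L_\xi^0)\Pi^h\|_{Y_{\per}\to Y_{\per}}\lesssim \vep$, uniform in $(\vep,\xi)$. This requires a careful treatment of the nonlocal operators $G_\xi[\zeta_\vep]-G_\xi[0]$ and $P_\xi[\zeta_\vep]-P_\xi[0]$, for which one can invoke the shape-derivative analysis of the Dirichlet--Neumann operator from \cite{Lannes} together with the smoothness and $\vep$-expansion of the small-amplitude wave profile (cf.\ Lemma \ref{lem-waveprofile}).
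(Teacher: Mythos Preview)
Your argument is correct and takes a genuinely different route from the paper. Both proofs rest on the same structural fact---that the self-adjoint part $A_\xi^\vep$ is coercive outside a finite-dimensional subspace---but exploit it differently. The paper decomposes along the eigenbasis of $A_\xi^\vep$ itself (splitting off the finitely many eigenvalues inside a fixed ball $B_R$), pairs the resolvent equation $(\lambda-JA_\xi^\vep)u=f$ with $A_\xi^\vep u$, and derives the a priori bound $\|u\|_{Y_{\per}}\lesssim |\lambda|^{-1}\|u\|_{Y_{\per}}+(\Re\lambda)^{-1}\|f\|_{Y_{\per}}$, which for $|\Im\lambda|$ large forces $\lambda$ into the resolvent set. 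You instead use a Fourier low/high decomposition together with the Krein identity $\langle A_\xi^\vep V,V\rangle=-\lambda\langle JV,V\rangle$ on eigenvectors, reducing everything to positivity of the quadratic form. Your approach is conceptually very clean, but the price is the off-diagonal estimate $\|\Pi^\ell(L_\xi^\vep-L_\xi^0)\Pi^h\|_{Y_{\per}\to Y_{\per}}\lesssim\vep$, which is not free since $L_\xi^\vep-L_\xi^0$ is unbounded on $Y_{\per}$; it does hold, via the smoothing of $\Pi^\ell$ and rapid Fourier decay of the analytic coefficients, but with a constant depending on $K_0$. The paper's choice to split along the spectrum of $A_\xi^\vep$ rather than Fourier modes avoids this issue entirely---there is no frequency coupling to control---at the cost of a slightly less explicit argument. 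Two minor points: the modewise positivity threshold is governed by the dispersion relation rather than $|j+\xi|>1/\beta$, and in Step~4 you need $C\vep/M$ small relative to $c_0/C_1$ (with $C_1$ the form bound on $Y_{\per}$), not merely $\le 1/2$, to absorb the cross terms.
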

\begin{proof}
Let $L_{\xi}^{\vep}=J A_{\xi}^{\vep}$, with the symmetric operator $A_{\xi}^{\vep}$ defined by 
\begin{align*}
A_{\xi}^{\vep}\coloneqq \left( \begin{array}{cc}
   -P_{\xi}[\zeta_{\vep}]+  w_{\vep}& -d_{\vep}k_{\vep}(\p_x+i\xi)  \\[5pt]
   k_{\vep}(\p_x+i\xi)(d_{\vep}\cdot)  &  G_{\xi}[\zeta_{\vep}]\,\cdot  
\end{array}\right),
\end{align*}
where $P_{\xi}[\zeta_{\vep}],\, G_{\xi}[\zeta_{\vep}]$ are defined in \eqref{def-GP}. Since $[-\f12,\f12]$ is compact, it suffices to prove the desired result for $\xi$ lying in a small interval $(\xi_0-\delta,\xi_0+\delta)$, where $\xi_0\in [-\f12,\f12]$ and $\delta>0$ is small enough.

The proof relies on the fact that the symmetric operator $A_{\xi}^{\vep}$ has finitely many negative eigenvalues for sufficiently small $\vep$, which we will now show. When $\vep=0$, the eigenvalues of $A_{\xi}^0$ can be computed via Fourier transform
\begin{align*}
\Lambda_j^{\pm}(\xi)=
\f12 \bigg({(\alpha+\beta\eta_j^2)+|\eta_j|\tah(|\eta_j|)}\pm \sqrt{\big((\alpha+\beta\eta_j^2)-|\eta_j|\tah(|\eta_j|)\big)^2+4\eta_j^2}\bigg)\in \mR
\end{align*}
where $\eta_j\coloneqq\kpa (j+\xi)$. It 
is easily verified that $\Lambda_j^{+}(\xi)>0$ for any $j\in \mathbb{Z}, \xi\in (-\f12,\f12]$, and $\Lambda_j^{-}(\xi)<0$ if and only if
\begin{align*}
   \alpha+\beta\eta_j^2< \f{\eta_j}{\tah(\eta_j)}.
\end{align*}
Consequently, for each $\xi\in (-\f12,\f12]$, there are only finitely many $j\in \mathbb{Z}$ such that $\Lambda_j^{-}(\xi)<0$. For  convenience, we label the real eigenvalues of $A_{\xi}^0$ by 
$(\tilde{\sigma}_j(\xi))_{j=-j_0(\xi)}^{+\infty}$
where $$j_0(\xi)\in (0,+\infty), \qquad 
\tilde{\sigma}_{\ell}(\xi)\leq \tilde{\sigma}_{\ell+1}(\xi),$$
and $\tilde{\sigma}_0(\xi)\geq 0$ is the smallest nonnegative eigenvalue of $A_{\xi}^0$. It is also worth noting that each eigenvalue has finite multiplicity. Take a $k_0
>0$ large enough such that $\tilde{\sigma}_{k_0+1}(\xi)-\tilde{\sigma}_{k_0}(\xi)>0$ for any $\xi\in (\xi_0-\delta, \xi_0+\delta)$.
We can find a ball $B_{R}$ around the origin with radius large enough such that 
$\tilde{\sigma}_{\ell}\in B_R$ for any $\ell\in (-j_0(\xi), k_0)$ and any $\xi\in (\xi_0-\delta, \xi_0+\delta)$, 
and $\p B_R \subset \rho(A_{\xi}^0; (L^2(\mathbb{T}_{2\pi}))^2)$. Moreover, the uniform resolvent estimate
\begin{align*}
 \sup_{|\xi-\xi_0|\leq \delta } \sup_{\lambda\in \partial B_R} \| (\lambda I-A_{\xi}^0)^{-1}\|_{B((L^2(\mathbb{T}_{2\pi}))^2)}<+\infty
\end{align*}
holds.
Since 
\begin{align*}
   \| A_{\xi}^{\vep}-A_{\xi}^0\|_{B( {\rm Dom}(A_{\xi}^0),\,(L^2(\mathbb{T}_{2\pi}))^2)}\lesssim \vep, 
\end{align*}
there exists $\tilde{\vep}_2$, such that for any 
$|\vep|\leq \tilde{\vep}_2, \lambda \in \rho(A_{\xi}^0, Y_{\per})$.
Then we define the projector
\begin{align*}
\tilde{\Pi}_{\xi}^{\vep}\coloneqq\oint_{B_R} (\lambda I-A_{\xi}^{\vep})^{-1}\, \d \lambda\,.
\end{align*}
Since $\|\tilde{\Pi}_{\xi}^{\vep}-\tilde{\Pi}_{\xi}^{0}\|\lesssim \vep$, we know that 
$B_R$ contains the same number of eigenvalues of $A_{\xi}^{\vep}$ and $A_{\xi}^0$, and thus $A_{\xi}^{\vep}$ also has finitely many negative eigenvalues. 

Since $A_{\xi}^{\vep}$ is self-adjoint on $(L^2(\mathbb{T}_{2\pi}))^2$ and has compact resolvent, there is a corresponding orthonormal eigenbasis $\{\theta_j(\xi,\cdot)\}$ of $(L^2(\mathbb{T}_{2\pi}))^2$. Let 
$\theta_{-j_0}(\xi,\cdot),\cdots \theta_{k_0}(\xi,\cdot)$ be the normalized eigenfunctions 
(in $Y_{\per}$)
associated with the negative and first $k_0+1$ non-negative eigenvalues of $A_{\xi}^{\vep}$. Furthermore, for any $u^{\perp}\in \{\theta_{-j_0},\cdots \theta_{k_0}\}^{\perp}$, there exists $c_0>0$ such that 
\begin{align*}
    \langle u^{\perp}, A_{\xi}^{\vep}u^{\perp}\rangle\geq c_0 \|u^{\perp}\|_{Y_{\per}}^2, \qquad  \xi\in(\xi_0-\delta,\xi_0+\delta). 
\end{align*}

Now for $\lambda$ with $\Re \lambda>0$ consider the resolvent problem in $Y_{\per}$
\begin{align}\label{resolventpb}
    (\lambda I-J A_{\xi}^{\vep})u=f.
\end{align}
Writing
\begin{align*}
    u=\sum_{j=-j_0}^{k_0} \gamma_j \theta_j + u^{\perp}
\end{align*}
and taking real part of the $(L^2(\mathbb{T}_{2\pi}))^2$ inner product of the above identity with $A_{\xi}^{\vep}u$, 
we find that 
\begin{align*}
   c_0 \,\Re \lambda  \|u^{\perp}\|_{Y_{\per}}^2\leq \Re \lambda \langle u^{\perp}, A_{\xi}^{\vep}u^{\perp} \rangle &\leq -\Re \lambda \sum_{j=-j_0}^{k_0} |\gamma_j|^2  \langle \theta_j, A_{\xi}^{\vep}\theta_j\rangle+\Re \langle f, A_{\xi}^{\vep}u \rangle\\
   & \leq R\, \Re \lambda  \sum_{j=-j_0}^{k_0} |\gamma_j|^2 + \|f\|_{Y_{\per}}\|u\|_{Y_{\per}}\,,
\end{align*}
where we have used the fact that $\theta_j$ are  eigenfunctions of $A_{\xi}^{\vep}$, associated with the eigenvalues lying inside the ball $B_R$.
It thus follows from  Young's inequality that
\begin{align}\label{es-1}
   \Re \lambda \,  \|u^{\perp}\|_{Y_{\per}}^2\leq C \bigg(\Re \lambda \sum_{j=-j_0}^{k_0} |\gamma_j|^2+(\Re \lambda)^{-1}\|f\|_{Y_{\per}}^2\bigg)\,.
\end{align}
On the other hand, we take the inner product of 
\eqref{resolventpb} with $\theta_j$,  integrate by parts and use the fact that $\theta_j$ is smooth to obtain that 
\begin{align}\label{es-2}
    |\lambda| \sum_{j=-j_0}^{k_0} |\gamma_j|\lesssim \|u\|_{Y_{\per}}+\|f\|_{Y_{\per}}.
\end{align}
Combining \eqref{es-1} and \eqref{es-2}, we find that 
\begin{align*}
\|u\|_{Y_{\per}}\leq \|u^{\perp}\|_{Y_{\per}}+\sum_j |\gamma_j|\|\theta_j\|_{Y_{\per}}& \lesssim \sum_j |\gamma_j|+(\Re \lambda)^{-1}\|f\|_{Y_{\per}}\\
&\lesssim |\lambda|^{-1}\|u\|_{Y_{\per}}+(\Re \lambda)^{-1}\|f\|_{Y_{\per}}\,.
\end{align*}
Consequently, as long as $|\Im \lambda|$ (and hence $|\lambda|$) is
large enough and $\Re \lambda>0$ then 
\begin{align*}
\|u\|_{Y_{\per}}
\lesssim (\Re \lambda)^{-1}\|f\|_{Y_{\per}}\,,
\end{align*}
so that $\lambda\in \rho(L_{\xi}^{\vep}; Y_{\per})$.
\end{proof}

\begin{prop} \label{prop-balls}
Let $(\alpha,\beta)\in \rm I\cup III$ and let $M>0$ be as in the statement of Theorem \ref{thm-exclusionveryhigh}.
There exists $\tilde{\vep}_0>0$  
such that for any $|\eps| \leq \tilde{\vep}_0$ and 
any $\xi\in (-\f12,\f12]$, the spectrum 
$$\sigma(L_{\xi}^{\vep})\cap \{\lambda : |\Im \lambda| \leq M\}$$
is contained in a finite union of
pairwise disjoint balls centered on the imaginary axis, denoted by 
$$B_j= B_{c_j}(z_j(\xi)), \qquad j\in \mathbb{Z}, -\ell_0(\xi)\leq j\leq \ell_1(\xi)$$ with $0<c_j=\cO(|j|^{\f12}), 0<\ell_0(\xi),\ell_1(\xi)=\cO(M^{2/3})$ and $z_j(\xi)\in \i \mR$. 
Moreover, each ball contains the same number of eigenvalues of $L_{\xi}^{\vep}$ as of $L_{\xi}^0$, and when that number is one, the eigenvalue lies on the imaginary axis. 
\end{prop}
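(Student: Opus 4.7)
The plan is to import the explicit information about $\sigma(L_\xi^0)$ from \eqref{defspec-0} and Lemma~\ref{lem-crossing}, and to transport it to $\sigma(L_\xi^\vep)$ by Kato's resolvent perturbation via a Neumann series, in the same spirit as the concluding part of the proof of Theorem~\ref{thm-exclusionveryhigh}. Since $w_j(\xi)\sim\sqrt{\beta}\,\kappa^{3/2}|j|^{3/2}$ as $|j|\to\infty$, the condition $|\Im\lambda_j^\pm(\xi)|\le M$ forces $|j|\lesssim M^{2/3}$, yielding the bound $\ell_0(\xi),\ell_1(\xi)=\mathcal O(M^{2/3})$. A direct derivative estimate on $w_j$ then shows that consecutive eigenvalues on the same branch are separated by $\sim|j|^{1/2}$, while the monotonicity and concavity facts used in the proof of Lemma~\ref{lem-crossing} separate distinct eigenvalues on different branches away from each other (uniformly in $\xi$ and $j$), except precisely at the exceptional crossings \eqref{crossing-0}--\eqref{othercrossing}.

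For each fixed $\xi$, I would cluster the eigenvalues of $L_\xi^0$ lying in the strip, grouping into the same cluster those (finitely many) pairs that become close as $\xi$ approaches one of the crossing values $\xi_\ell$ of \eqref{othercrossing} (and their reflections $-\overline{\,\cdot\,}$), and leaving all remaining eigenvalues as singletons. I would choose the center $z_j(\xi)$ as the arithmetic mean of each cluster, which lies on $\i\mathbb R$ since all eigenvalues of $L_\xi^0$ are purely imaginary by \eqref{defspec-0}, and the radius $c_j\sim|j|^{1/2}$ small enough that the balls $B_j=B_{c_j}(z_j(\xi))$ are pairwise disjoint and $\mathrm{dist}(\partial B_j,\sigma(L_\xi^0))\sim c_j$. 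Writing $L_\xi^\vep=L_\xi^0+V_\xi^\vep$, the explicit form \eqref{def-Lep} together with the smallness of the quantities $\zeta_\vep,Z_\vep,v_\vep$ gives $\|V_\xi^\vep u\|_{Y_{\per}}\lesssim\vep\,\|u\|_{\mathrm{Dom}(L_\xi^0)}$, and combining with the resolvent bound on $\partial B_j$ in both operator and graph norms leads to
\[
\bigl\|V_\xi^\vep(L_\xi^0-\lambda)^{-1}\bigr\|_{B(Y_{\per})}\,\lesssim\,\vep\,\frac{1+|\lambda|}{c_j}\,\lesssim\,\vep\,|j|\,\lesssim\,\vep\,M^{2/3},\qquad\lambda\in\partial B_j.
\]
A Neumann series then inverts $L_\xi^\vep-\lambda$ on each $\partial B_j$ for $\tilde\vep_0$ small enough, so the Riesz projector $\Pi_{j,\xi}^\vep=\frac{1}{2\pi\i}\oint_{\partial B_j}(L_\xi^\vep-\lambda)^{-1}\,d\lambda$ depends continuously on $\vep$ and has the same rank as $\Pi_{j,\xi}^0$. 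Hence $\bigcup_j B_j$ captures $\sigma(L_\xi^\vep)\cap\{|\Im\lambda|\le M\}$ with matching multiplicities.

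To conclude, if some $B_j$ contains a single eigenvalue of $L_\xi^\vep$, the Hamiltonian symmetry \eqref{HS-Lxi} implies that $\sigma(L_\xi^\vep)\cap B_j$ is invariant under $\lambda\mapsto-\overline{\lambda}$; since $B_j$ is itself invariant under this reflection (its center lies on $\i\mathbb R$), the lone eigenvalue must be fixed by the reflection, i.e.\ purely imaginary. The main obstacle I anticipate is the uniformity of the clustering and of the above perturbation bound as $\xi$ sweeps through $(-\tfrac12,\tfrac12]$: when $\xi$ approaches a crossing parameter $\xi_\ell$, two eigenvalues merge and the radius of the corresponding cluster ball must be chosen as a continuous function of $\xi$ that smoothly interpolates between ``two close eigenvalues in one ball'' and ``two well-separated eigenvalues in two balls''. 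Since only finitely many $\xi_\ell$'s are relevant inside the strip $|\Im\lambda|\le M$, a Lebesgue-number argument on a finite open cover of $[-\tfrac12,\tfrac12]$ by neighborhoods of these $\xi_\ell$'s reduces the problem to finitely many local perturbation estimates of the type sketched above.
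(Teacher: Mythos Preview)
Your proposal is correct and follows essentially the same approach as the paper: construct the disjoint balls for $\sigma(L_\xi^0)$ from the explicit formula \eqref{defspec-0} together with the separation and crossing analysis of Lemma~\ref{lem-crossing}, then transfer the picture to $\sigma(L_\xi^\vep)$ by a Neumann-series resolvent perturbation and Riesz-projection rank argument, and conclude for simple eigenvalues via the Hamiltonian symmetry \eqref{HS-Lxi}. The only cosmetic difference is that the paper handles the uniformity in $\xi$ by an explicit case split ($|\xi|\le\xi_0$ versus $\xi_0<|\xi|\le\tfrac12$, with a further $\delta_2$-neighborhood around each $\xi_\ell$) rather than your Lebesgue-number compactness argument, but the content is the same.
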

\begin{proof}
We first prove the result for $\vep=0$. Recall that 
by \eqref{defspec-0}, the spectrum of $L_{\xi}^0$ is 
composed of $\{\lambda_{j}^{\pm}(\xi)\}_{j\in \mathbb{Z}}$.  By the identity $L_{\xi}^0=\overline{L_{-\xi}^0}$, it suffices to prove the result for $L_{\xi}^0$ with $\xi\in [0,\f12]$.

 Let $\xi_0>0$ be a  fixed sufficiently small number independent of  $\vep$.
We consider separately the case $0\leq \xi\leq \xi_0$ and $\xi_0< \xi\leq \f12$. 
For $0\leq \xi\leq \xi_0$, there is $c_0=\cO(\xi_0)>0$ such that
\begin{align*}
  \lambda_{-1}^{+}(\xi),\, \lambda_{1}^{-}(\xi),\, \lambda_{0}^{\pm}(\xi)  \in B_{c_0}(0),\\
  \Big(\bigcup_{|j|\geq 2} \{\lambda_{j}^{\pm}(\xi) \} \Big)\bigcup \{\lambda_{-1}^{-}(\xi),\lambda_{1}^{+}(\xi) \}\subset (B_{2c_0}(0))^c.
\end{align*}
It suffices to show that $\bigcup_{|j|\geq 2} \{\lambda_{j}^{\pm}(\xi)\}$ is contained in a countable union of  disjoint balls. For any  $|j|\geq 2$, 
there is $\delta_1>0$ small, such that
 $|\lambda_{j}^{\pm}(\xi)- \lambda_{j+1}^{\pm}(\xi)|\geq \delta_1\sqrt{\beta}|j|^{\f12}>0$. 
By monotonicity of $\sigma_{+}(k)$ 
on $[0,+\infty)$ and $[-\infty,-\kappa]$, 
$\sigma_{-}(k)$ on $(-\infty,0]$ and $[\kpa,+\infty)$,  any two eigenvalues $\lambda_k^{\pm}(\xi)$ and $\lambda_{m}^{\pm}(\xi)$ (say $k>m$) are well-separated by a distance of order $\cO\big((|k|+|m|)^{\f12}
\big)$, as long as 
$$km\geq 0 \quad \text{\,or\,} \quad  km\leq 0 \,\, \text{\,\,but\,} \,\,  (k,m)\neq (j_{\ell}, j_{\ell}'); (-j_{\ell}', -j_{\ell}),$$
where $j_{\ell}, j_{\ell}'$ are defined in Lemma \ref{lem-crossing} (2). On the other hand, if 
$(k,m)=(j_{\ell}, j_{\ell}')$ but $|\xi-\xi_{\ell}|\geq  \delta_2$, with $\delta_2 \in (0,\delta_1)$ a small parameter to be chosen, then
\begin{align*}
  |\lambda_{k}^{+}(\xi)-\lambda_{m}^{+}(\xi)|= |\lambda_{j_{\ell}}^{+}(\xi)-\lambda_{{j_{\ell}}}^{+}(\xi_{\ell})|+|\lambda_{{j'_{\ell}}}^{+}(\xi)-\lambda_{{j'_{\ell}}}^{+}(\xi_{\ell})|\gtrsim_{\alpha}\sqrt{\beta} \delta_2 \, (|k|+|m|)^{\f12},
\end{align*}
and thus the two eigenvalues remain separated by a distance of order
$\cO(|k|+|m|)^{\f12}$.
Similar facts hold for $(k,m)=(-j_{\ell}', -j_{\ell})$ and $|\xi+\xi_{\ell}|\geq  \delta_2$. 
However, if $(k,m)=(j_{\ell}, j_{\ell}')$ but $|\xi-\xi_{\ell}|\leq  \delta_2 $, 
$\lambda_{k}^{+}(\xi)$ and $\lambda_m^{+}(\xi)$ 
may be arbitrarily close. If this occurs, we proceed as follows: on the one hand, we choose a sufficiently large number $K$ such that 
$$\lambda_{k}^{+}(\xi),\, \lambda_{m}^{+}(\xi)\in B_{K\delta_2\sqrt{\beta |k|}}(\i\sigma_{\ell}), \qquad (\,\i\sigma_\ell=\lambda^{+}_{j_{\ell}}(\xi_{\ell}))\,.$$
On the other hand, we select
$\delta_2$ small enough so that $K\delta_2\leq {\delta_1}/{4}$, which ensures  that the ball is separated from the rest of the spectrum of $L_{\xi}^0$ by a positive distance of order $\cO(|k|^{\f12})$. This completes the proof for $0\leq \xi\leq \xi_0$. The case of 
$\xi_0<\xi\leq \f12$ is similar, the only difference being that there is no spectral crossing near zero, and the eigenvalues
$\lambda_{-1}^{+}(\xi),\, \lambda_{1}^{-}(\xi),\, \lambda_{0}^{\pm}(\xi)$ are separated. It suffices to construct four disjoint balls to isolate them. The construction of disjoint balls covering the rest of the spectrum proceeds as in the previous arguments, and we therefore omit the details.

We now turn to the case of non-zero $\vep$. 
For convenience, we relabel the disjoint balls found above for $\vep=0$ as 
\begin{align*}
   B_j= B_{c_j}(z_j(\xi)), \quad c_j=\cO(1+|j|^{\f12}), \quad \inf_{j\in \mathbb{Z}} c_j\geq 2\delta_3>0, \, \qquad (j\in \mathbb{Z}, -\ell_0(\xi)\leq j\leq \ell_1(\xi)).
\end{align*}
Each ball contains either four, two or one spectral values of $L_{\xi}^0$. 
Moreover, denote 
\begin{align*}
  \mathcal{E}(\xi)=  (\cup_j B_j(\xi))^c\cap \{|\Im \lambda |\leq M\} ,
\end{align*}
 we can ensure, by the construction,  the uniform resolvent estimates
\begin{align}\label{resov-1}
\sup_{\xi\in (-\f12,\f12]} 
\sup_{ \lambda \in\mathcal{E}(\xi)}
\|(\lambda I-L_{\xi}^0)^{-1}\|_{B(Y_{\per})}<+\infty,
\end{align}
and 
\beq\label{resov-2}
\begin{aligned}
&\sup_{\xi\in (-\f12,\f12]} 
\sup_{\lambda \in 
\mathcal{E}(\xi)} 
\|(\lambda I-L_{\xi}^0)^{-1}\|_{B(Y_{\per}, {\rm Dom}(L_{\xi}^0))}\lesssim 1+M^{2/3}. 
\end{aligned}
\eeq
On the other hand, it holds that 
\begin{align*}
   \| L_{\xi}^{\vep}-L_{\xi}^0\|_{B( {\rm Dom}(L_{\xi}^0),\,Y_{\per})}\lesssim \vep.
\end{align*}
Consequently, there is $\tilde{\vep}_0>0$ such that  for any $|\vep|\leq \tilde{\vep}_0$ and any $\xi\in (-\f12,\f12]$
\begin{align*}
    \cE(\xi)\subset \rho(L_{\xi}^{\vep}; Y_{\per}), 
\end{align*}
and 
 the spectral projections 
 \begin{align}\label{def-specproj}   \Pi_{\xi,j}^{\vep}\coloneqq\oint_{\p B_j} (\lambda I-L_{\xi}^{\vep})^{-1}\d \lambda
 \end{align}
satisfy 
\begin{align*}
    \|\Pi_{\xi,j}^{\vep}-\Pi_{\xi,j}^0\|_{B(Y_{\per})}\lesssim \tilde{\vep}_0, \qquad  -\ell_0(\xi)\leq j\leq \ell_1(\xi).
\end{align*}
The last estimate ensures that $B_j$ contains the same number of eigenvalues of $L_{\xi}^{\vep}$ as of $L_{\xi}^0$.  Due to Hamiltonian symmetry, if a ball $B_j$ contains only one eigenvalue of $L_{\xi}^{\vep}$, then this eigenvalue must lie on the imaginary axis. 
\end{proof}

To determine whether $L_{\xi}^{\vep}$ has an unstable spectrum, it suffices to examine its spectrum within the balls that contain two or four eigenvalues of $L_{\xi}^0$, close to the crossing spectrum identified in Lemma \ref{lem-crossing}. Let us remark that from Theorem \ref{thm-exclusionveryhigh} and
the proof of Lemma \ref{lem-crossing}, for 
$|\vep|\leq \min \{\tilde{\vep}_1,\tilde{\vep}_0\},$
 the possible unstable spectrum of $L^{\vep}$ is contained in a finite union of disjoint balls. For clarity, we denote them by 
 $B_{c_0}(0)$ and $B_{r_{\ell}}(\i \sigma_{\ell})$, $\ell=2,\ldots, N_0$, where the points $\i\sigma_{\ell}$ are spectral crossings identified in Lemma \ref{lem-crossing} that satisfy $|\sigma_{\ell}|\leq M$. 
In the next two sections, we analyze the spectrum of  $L_{\xi}^{\vep}$ insides these balls, focusing separately on the parts away from zero (in $B_{r_{\ell}}(\i \sigma_{\ell})$) and near zero (in $B_{c_0}(0)$). The result stated in Theorem \ref{thm-1} will then be a consequence of Theorem \ref{thm-highfreq} and Theorem \ref{thm-modulation}.

\section{Stability near the crossings away from the origin} 
In this section, we show that the spectrum of $L^{\vep}$ near the crossing spectrum 
away from the origin remains on the imaginary axis when $\vep$ is sufficiently small. Letting $B_{r_{\ell}}(\i \sigma_{\ell})$, $\ell=2,\ldots N_0$, be defined in the previous section, we prove the following.

\begin{thm}\label{thm-highfreq}
Let $(\alpha,\beta)\in \rm I\cup III$. 
There exists $\vep_1>0$ such that
for any $|\vep|\leq \vep_1$ and any $\ell=2,\ldots, N_0$, the spectrum of $L^{\vep}$ within the ball   $B_{r_{\ell}}(\i\sigma_{\ell})$ lies on the imaginary axis. 
\end{thm}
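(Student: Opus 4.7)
The plan is to implement, at each non-zero crossing $i\sigma_\ell$, the finite-dimensional reduction sketched in the introduction. By Proposition \ref{prop-balls}, for $|\vep|\le\tilde{\vep}_0$ the spectrum of $L^{\vep}$ inside $B_{r_\ell}(i\sigma_\ell)$ is the union over $\xi$ in a compact neighborhood of $\xi_\ell$ of the two eigenvalues of $L_\xi^{\vep}$ that bifurcate from the double eigenvalue $i\sigma_\ell=\lambda_{j_\ell}^{+}(\xi_\ell)=\lambda_{j_\ell'}^{+}(\xi_\ell)$. So I would fix such an $\ell$ and work locally in $(\vep,\xi)$ near $(0,\xi_\ell)$.

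First I would construct the natural basis at $\vep=0$. Since $L_{\xi_\ell}^{0}$ has constant coefficients, the generalized eigenspace associated with $i\sigma_\ell$ is spanned by two Fourier modes $q_1^{0}(\xi_\ell,\cdot)=e^{ij_\ell x}v_1$, $q_2^{0}(\xi_\ell,\cdot)=e^{ij_\ell' x}v_2$ with $v_1,v_2\in\mathbb{C}^2$ explicit eigenvectors of the symbols. By Lemma \ref{lem-crossing}(2) the two crossings always come from the \emph{same} branch $\sigma_{+}$ (or $\sigma_{-}$), so $v_1,v_2$ are read off from the same reduced matrix and the quantities
\[
\omega_j(\xi_\ell)\,=\,\langle i\,J\,q_j^{0}(\xi_\ell,\cdot),\,q_j^{0}(\xi_\ell,\cdot)\rangle,\qquad j=1,2,
\]
will be computed explicitly; the crucial algebraic input is $\omega_1(\xi_\ell)\omega_2(\xi_\ell)>0$, i.e.\ the two Krein signatures coincide. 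I then extend $q_1^{0},q_2^{0}$ to a smooth basis $q_1^{\vep}(\xi,\cdot),q_2^{\vep}(\xi,\cdot)$ of the range of the spectral projector $\Pi_{\xi,\ell}^{\vep}$ defined in \eqref{def-specproj} via Kato's transformation operators applied to the two-parameter family $(\vep,\xi)\mapsto L_\xi^{\vep}$; smoothness of $\Pi_{\xi,\ell}^{\vep}$ in both parameters follows from the uniform resolvent bounds \eqref{resov-1}--\eqref{resov-2} and the fact $\|L_\xi^{\vep}-L_\xi^{0}\|\lesssim\vep$.

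The next step is to build a dual basis exploiting the Hamiltonian symmetry \eqref{HS-Lxi}. For each $(\vep,\xi)$ the pairing $(u,v)\mapsto\langle iJu,v\rangle$ is symplectic on the (finite-dimensional) range of $\Pi_{\xi,\ell}^{\vep}$, and by a continuity argument the Gram matrix
\[
\bigl(\langle iJq_j^{\vep}(\xi,\cdot),q_\ell^{\vep}(\xi,\cdot)\rangle\bigr)_{j,\ell=1,2}
\]
is close to $\mathrm{diag}(\omega_1(\xi_\ell),\omega_2(\xi_\ell))$ and hence invertible. Inverting it yields a dual basis $\tilde{q}_1^{\vep},\tilde{q}_2^{\vep}\in\mathrm{span}\{iJq_1^{\vep},iJq_2^{\vep}\}$, and the associated representation matrix
\[
D_\xi^{\vep}=\bigl(\langle\tilde{q}_j^{\vep}(\xi,\cdot),L_\xi^{\vep}q_\ell^{\vep}(\xi,\cdot)\rangle\bigr)_{1\le j,\ell\le 2}
\]
has the same spectrum as $L_\xi^{\vep}\big|_{\mathrm{Ran}\,\Pi_{\xi,\ell}^{\vep}}$.

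The concluding step is to verify the identity \eqref{repmatrix-high}, that is, $D_\xi^{\vep}=iP_\xi\widetilde{D}_\xi^{\vep}P_\xi^{-1}$ with $P_\xi$ invertible and $\widetilde{D}_\xi^{\vep}$ real symmetric. Writing $L_\xi^{\vep}=JA_\xi^{\vep}$ with $A_\xi^{\vep}$ self-adjoint, the matrix entries become
\[
\langle\tilde{q}_j^{\vep},L_\xi^{\vep}q_\ell^{\vep}\rangle\,=\,\sum_{k}\bigl(G^{-1}\bigr)_{jk}\,i\,\langle q_k^{\vep},A_\xi^{\vep}q_\ell^{\vep}\rangle,
\]
where $G=(\langle iJq_j^{\vep},q_\ell^{\vep}\rangle)_{jk}$ is Hermitian and $(\langle q_k^{\vep},A_\xi^{\vep}q_\ell^{\vep}\rangle)_{k\ell}$ is Hermitian. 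The equality-of-signatures property $\omega_1\omega_2>0$ makes $G$ (positive or negative) definite, so $G=\pm Q^{*}Q$ for some invertible $Q$; setting $P_\xi=Q^{-1}$ gives the desired conjugation to $i$ times a real symmetric matrix. Hence $\sigma(D_\xi^{\vep})\subset i\mathbb{R}$, which completes the argument.

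The main obstacle, and the place I would expect to spend the most care, is the algebraic verification that the Gram matrix $G$ remains definite uniformly in $(\vep,\xi)$. This amounts to computing $\omega_1(\xi_\ell),\omega_2(\xi_\ell)$ in closed form from the dispersion relation and checking that both have the same sign at every non-zero crossing in region $\mathrm{I}\cup\mathrm{III}$ (as opposed to region $\mathrm{II}$, where mixed signatures occur and instabilities can bifurcate; cf.\ Appendix \ref{app-regionII-highcrossing}). Once this signature identity is established, continuity in $(\vep,\xi)$ propagates it to the full neighborhood and the reality of $\widetilde{D}_\xi^{\vep}$ forces the two bifurcating eigenvalues to stay on the imaginary axis.
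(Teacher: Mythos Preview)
Your proposal is correct and follows essentially the same route as the paper: Kato extension of the two Fourier eigenvectors, dual basis built from $iJq_j^\vep$, and the observation that equal Krein signatures force the representation matrix to be $i$ times a Hermitian matrix. Two refinements are worth noting.

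First, the paper uses a sharper fact than your continuity argument: the Kato transformation operator $\mathcal{U}_\xi(\vep)$ preserves the symplectic form \emph{exactly} (see \eqref{preseve-syminner}). Hence the off-diagonal entries of your Gram matrix $G$ vanish identically for all $(\vep,\xi)$, not just approximately, and the diagonal entries stay equal to $4\pi\omega_{j}(\xi),\,4\pi\omega_{j'}(\xi)$. This lets one take the dual basis simply as $\tilde{q}_j^{\vep}=-\,iJq_j^{\vep}/(4\pi\omega_j(\xi))$ and the conjugating matrix $P_\xi=\mathrm{diag}(\sqrt{\omega_j},\sqrt{\omega_{j'}})$ explicitly, with no need to factor a general Hermitian $G$.

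Second, the step you flag as the ``main obstacle''---checking that the two Krein signatures agree---is in fact immediate once the basis is written down: $\omega_j(\xi)$ in \eqref{defomegaj} is a positive square root, so $\langle iJq_j^0,q_j^0\rangle=4\pi\omega_j(\xi)>0$ for both $j$. The substantive input (that the crossing eigenvalues come from the \emph{same} branch $\sigma_+$) is already contained in Lemma \ref{lem-crossing}. Finally, $\widetilde{D}_\xi^{\vep}$ is Hermitian rather than real symmetric; the conclusion that its eigenvalues are real is of course unaffected.
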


Since $\i\sigma_{\ell}$ is 
crossing spectrum of $L_{\xi_{\ell}}^{\vep}$,
we only need to examine the spectrum of $L_{\xi}^{\vep}$ for $\xi$ close to $\xi_{\ell}$ inside the ball $B_{r_{\ell}}(\i\sigma_{\ell})$ which contains two discrete eigenvalues (up to multiplicity) of $L_{\xi}^{\vep}$ for any $|\vep|\leq \tilde{\vep}_0$, with $\p B_{r_{\ell}}(\i\sigma_{\ell})\in \rho(L_{\xi}^{\vep})$  and $|r_{\ell}|=\cO(|\ell|^{\f12})$. 
To get more precise information about these two eigenvalues, we will compute the representation matrix of the operator $L_{\xi}^{\vep}$ acting on the invariant subspace spanned by generalized eigenvectors corresponding to these eigenvalues by constructing a basis and a dual basis for the subspace.

\begin{proof}

The following computation applies for any $2\leq \ell\leq N_0$, but for notational simplicity, we will denote 
\begin{align*}
    j'\coloneqq j'_{\ell}, \quad j\coloneqq j_{\ell}. 
\end{align*}
We start from a basis of eigenvectors of $L_{\xi}^0$ for the spectral subspace corresponding to spectrum close to $\lambda_j^{+}(\xi_{\ell}),$
which can be computed explicitly thanks to the Fourier transform
\begin{align*}
  q_1^0(\xi,x)=  e^{\i j'  x} \begin{pmatrix}1\\\,\i\,\omega_{j'}(\xi)\end{pmatrix} , \qquad  q_2^0(\xi,x)=  e^{\i j  x} \begin{pmatrix}1\\\,\i\,\omega_{j}(\xi)\end{pmatrix} ,
\end{align*}
where 
\begin{align} \label{defomegaj}
\omega_{j}(\xi)\coloneqq\sqrt{\frac{\alpha+\beta \kappa^2(j+\xi)^2}{\kappa (j+\xi) \tah (\kappa (j+\xi))}}>0 , \qquad (j,\xi)\neq (0,0).
\end{align}
 To  extend $q_1^0$ and $q_2^0$ for $\vep\neq 0$, 
we define the spectral projector $\Pi_{\xi}^{\vep}$ 
\[
\Pi_{\xi}^{\vep}\coloneqq\f{1}{2i \pi} \oint_{\p B_{r_{\ell}}(\lambda_{\ell})}
(\lambda I-L_{\xi}^{\vep})^{-1}\, \d \lambda, \qquad 
\]
which is valid if $|\vep|\leq \tilde{\vep}_0$, and $|\xi-\xi_{\ell}|\leq \delta_2$  where $\tilde{\vep}_0>0$ and $\delta_2>0$ appear in the statement of Proposition \ref{prop-balls} and its proof.  Moreover, thanks to \eqref{resov-1}, \eqref{resov-2}, there is $\vep_1\in(0, \tilde{\vep}_0]$ such that
it holds that 
\begin{align}\label{unies-proj-vep}
    \sup_{|\vep|\leq \vep_1 } \sup_{|\xi-\xi_{\ell}|\leq \delta_2} \|\Pi_{\xi}^{\vep}\|_{B(Y_{\per})}+\|\p_{\vep}\Pi_{\xi}^{\vep}\|_{B(Y_{\per})}<+\infty.
\end{align}

With this in hand, we define the extension operator $\cU_{\xi}(\vep)$ by solving the ODE
 \begin{align}\label{def-extop-ep}
\p_{\vep}\mathcal{U}_\xi(\vep)
=[\p_{\vep}\Pi_{\xi}^{\vep},\Pi_{\xi}^{\vep}]\ \mathcal{U}_\xi(\vep)\,, \qquad\qquad
\mathcal{U}_{\xi}(0)=\Id.
\end{align}
Then we set
\begin{align*}
    q_1^{\vep}(\xi,\cdot)= \mathcal{U}_\xi(\vep)q_1^0(\xi,\cdot), \quad q_2^{\vep}(\xi,\cdot)=\mathcal{U}_\xi(\vep)q_2^0(\xi,\cdot)
\end{align*}
which, by Kato's perturbation theory \cite[Chapter 2, Section 4]{Kato}, 
form a basis for the spectral subspace of $L_{\xi}^{\vep}$ associated to the spectrum inside of $B_{r_{\ell}}(\lambda_{\ell})$. 
Let us remark that by \eqref{unies-proj-vep} and the Cauchy--Lipschitz theorem for ODE in Banach spaces, \eqref{def-extop-ep} admits a solution in the interval $\vep\in [-{\vep}_1, {\vep}_1]$, uniformly in $|\xi-\xi_{\ell}|\leq \delta_2$. We refer to Proposition \ref{prop-extop} for properties of this extension operator.

By the Hamiltonian symmetry \eqref{HS-Lxi}, $\{Jq_1^{\vep}(\xi,\cdot), Jq_2^{\vep}(\xi,\cdot)\}$ forms a basis for the spectral subspace of $(L_{\xi}^{\vep})^{*}$ associated with eigenvalues inside of $B_{r}(\bar{\lambda}_0)$.
Since $j\neq j'$, it holds that 
\begin{align}\label{id-ortho}
\langle J\,q_{1}^{0}(\xi,\cdot),q_{2}^{0}(\xi,\cdot)\rangle&=0,&
\langle J\,q_{2}^{0}(\xi,\cdot),q_{1}^{0}(\xi,\cdot)\rangle&=0.&
\end{align}
Moreover, it follows from direct computations that
\begin{align}\label{Krein}
\langle J q_{1}^{0}(\xi,\cdot), q_{1}^{0}(\xi,\cdot)\rangle&=- 4\,\pi\,\i\,\omega_{j'}(\xi),&
\langle Jq_{2}^{0}(\xi,\cdot), q_{2}^{0}(\xi,\cdot)\rangle&=- 4\,\pi \, \i  \, \omega_{j}(\xi).
\end{align}
By \eqref{preseve-syminner}, the extension operator $\cU_{\xi}(\vep)$ preserves the 
symplectic inner product, thus
the identities \eqref{id-ortho} and \eqref{Krein} hold true when $\vep=0$ is replaced by $\vep\neq 0$.
This motivates us to define the following dual basis when $\vep\neq 0$:
\begin{align*}
   \tilde{q}_1^{\vep}(\xi,\cdot)&=-\f{\i\,  Jq_{1}^{\vep}(\xi, \cdot)}{4\,\pi\,\omega_{j'}(\xi)}\,,&
\tilde{q}_{2}^{\vep}(\xi, \cdot)=-\f{\i\, Jq_{2}^{\vep}(\xi, \cdot)}{4\,\pi\, \omega_{j}(\xi)}, 
\end{align*}
which satisfies $\langle \tilde{q}_j^{\vep}(\xi,\cdot), q_k^{\vep}(\xi,\cdot) \rangle=\delta_{jk}$. 

With these in hand, we  find that
\begin{align*}
    L_{\xi}^{\vep}(q_1^{\vep}(\xi,\cdot),q_2^{\vep}(\xi,\cdot))=(q_1^{\vep}(\xi,\cdot),q_2^{\vep}(\xi,\cdot)) D_{\xi}^{\vep}
\end{align*}
where 
\begin{align*}
 D_{\xi}^{\vep}= \left( \begin{array}{cc}
\langle \tilde{q}_{1}^{\vep}(\xi, \cdot), L_{\xi}^{\vep}  {q}_{1}^{\vep}(\xi, \cdot)\rangle & \langle \tilde{q}_{1}^{\vep}(\xi, \cdot), L_{\xi}^{\vep}  {q}_{2}^{\vep}(\xi, \cdot)\rangle\\[3pt]
 \langle \tilde{q}_{2}^{\vep}(\xi, \cdot), L_{\xi}^{\vep}  {q}_{1}^{\vep}(\xi, \cdot)\rangle & \langle \tilde{q}_{2}^{\vep}(\xi, \cdot), L_{\xi}^{\vep} q_{2}^{\vep}(\xi, \cdot)\rangle
    \end{array}\right).
\end{align*}
The spectrum of $L_{\xi}^{\vep}$ inside of $B_{\delta}(\lambda_0)$ is thus the eigenvalues of 
$D_{\xi}^{\vep}$. In view of the relation \eqref{relation-imp},
\begin{align*}
   D_{\xi}^{\vep}&=
   \f{\i}{{4\,\pi}}\left( \begin{array}{cc}
\f{\langle {q}_{1}^{\vep}(\xi, \cdot), A_{\xi}^{\vep}  {q}_{1}^{\vep}(\xi, \cdot)\rangle}{\omega_{j'}(\xi)} & \f{\langle {q}_{1}^{\vep}(\xi, \cdot), A_{\xi}^{\vep}  {q}_{2}^{\vep}(\xi, \cdot)\rangle}{\omega_{j'}(\xi)}\\[5pt]
 \f{\langle {q}_{2}^{\vep}(\xi, \cdot), A_{\xi}^{\vep}  {q}_{1}^{\vep}(\xi, \cdot)\rangle}{{\omega_{j}(\xi)}} & \f{\langle {q}_{2}^{\vep}(\xi, \cdot), A_{\xi}^{\vep} q_{2}^{\vep}(\xi, \cdot)\rangle}{{\omega_{j}(\xi)}}
    \end{array}\right) 
    = \f{\i}{ 4\,\pi} P_{\xi}\tilde{D}_{\xi}^{\vep}P_{\xi}^{-1}
\end{align*}
with 
\begin{align*}
P_{\xi}\coloneqq\left( \begin{array}{cc}
    \sqrt{\omega_{j}(\xi)} & 0\\
    0 &  \sqrt{\omega_{j'}(\xi)}
     \end{array}\right),\qquad \tilde{D}_{\xi}^{\vep}\coloneqq\left( \begin{array}{cc}
\f{\langle {q}_{1}^{\vep}(\xi, \cdot), A_{\xi}^{\vep}  {q}_{1}^{\vep}(\xi, \cdot)\rangle}{\omega_{j'}(\xi)} & \f{\langle {q}_{1}^{\vep}(\xi, \cdot), A_{\xi}^{\vep}  {q}_{2}^{\vep}(\xi, \cdot)\rangle}{\sqrt{\omega_{j'}(\xi)\,\omega_{j}(\xi)}} \\[5pt]
 \f{\langle {q}_{2}^{\vep}(\xi, \cdot), A_{\xi}^{\vep}  {q}_{1}^{\vep}(\xi, \cdot)\rangle}{\sqrt{\omega_{j'}(\xi)\,\omega_{j}(\xi)}} & \f{\langle {q}_{2}^{\vep}(\xi, \cdot), A_{\xi}^{\vep} q_{2}^{\vep}(\xi, \cdot)\rangle}{{\omega_{j}(\xi)}}\end{array}\right).
\end{align*}
Since $A_{\xi}^{\vep}$ is a symmetric operator, it holds that $(\tilde{D}_{\xi}^{\vep})^{*}=\tilde{D}_{\xi}^{\vep}$. Therefore, $\sigma(\tilde{D}_{\xi}^{\vep})\subset \mR$ and hence  $\sigma({D}_{\xi}^{\vep})\subset \i \mR$. The proof is thus finished.
\end{proof}

\section{Modulational stability}
In this section, we investigate the spectral properties of $L^{\vep}$ near the origin, which corresponds to the modulational (in-)stability of the periodic waves. The  result stated in Theorem \ref{thm-fullmodulation} is a consequence of the 
following theorem.
\begin{thm}\label{thm-modulation}
Let $(\alpha,\beta)\in \rm I\cup II \cup III$, 
and let
$\xi_0>0$ $c_0>0$ be as in the proof of Proposition \ref{prop-balls}. 
If the index  $\tilde{C}(\alpha,\beta)$ defined in \eqref{def-index-real} is positive, then there 
exists $\vep_2>0,\,\xi_2\in(0,\xi_0]$ such that for any $|\vep|\leq {\vep}_2, |\xi|\leq \xi_2, $ it holds that
\[
\displaystyle
\sigma\left(L_{\xi}^{\vep}\right)\cap B_{c_0}(0)\subset \i\,\mathbb{R}\,.
\]
On the other hand, if  $\tilde{C}(\alpha,\beta)<0$,
then there is a constant $A=A(\alpha,\beta)$, such that if $|\xi|<A|\vep|\ll 1$, then
$\sigma\big(L_{\xi}^{\vep}\big) \cap B_{c_0}(0)$ contains two purely imaginary eigenvalues and two eigenvalues with nonzero real parts, whose absolute values are of order $\mathcal{O}(|\vep\xi|)$. 

\end{thm}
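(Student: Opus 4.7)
The plan is to combine the Bloch--Floquet reduction with a four-dimensional Kato-style perturbation argument. By Proposition \ref{prop-balls}, it suffices to locate the four eigenvalues of $L_{\xi}^{\vep}$ lying in $B_{c_0}(0)$ for small $|\vep|,|\xi|$; these are the perturbations of the four-fold crossing of $L_{0}^{0}$ at the origin identified in Lemma \ref{lem-crossing}.

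First I would construct a basis and a dual basis for the associated four-dimensional spectral subspace. Rather than starting from the Fourier eigenfunctions of $L_{\xi}^{0}$, which would force $\vep^{2}$-expansions of the wave profile, I would start at $\xi=0$ and exploit the background-wave structure: the generalised kernel of $L_{0}^{\vep}$ can be produced directly from the profile equation and a Lyapunov--Schmidt reduction, giving an explicit basis $\{q_{j}^{\vep}(0,\cdot)\}_{j=1}^{4}$ (this is Proposition \ref{prop-kernel}). I would then extend these to $\{q_{j}^{\vep}(\xi,\cdot)\}$ for small $\xi$ by the Kato transport operator driven by $\Pi_{\xi}^{\vep}$, the spectral projector onto the part of $\sigma(L_{\xi}^{\vep})$ inside $B_{c_{0}}(0)$. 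Since the $q_{j}^{\vep}(0,\cdot)$ need not be mutually symplectic-orthogonal, the short-cut $\tilde q_{j}^{\vep}=\i J q_{j}^{\vep}$ used in \cite{BMP-ARMA} is unavailable; instead I would take $\tilde q_{j}^{\vep}(\xi,\cdot)$ to be the unique linear combination of $\{\i J q_{\ell}^{\vep}(\xi,\cdot)\}_{\ell=1}^{4}$ biorthogonal to $\{q_{\ell}^{\vep}(\xi,\cdot)\}$ and then verify that the change-of-basis matrix is a small perturbation of the identity.

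Next I would compute the representation matrix $\rD_{\xi}^{\vep}=(\langle \tilde q_{j}^{\vep}(\xi,\cdot),L_{\xi}^{\vep} q_{\ell}^{\vep}(\xi,\cdot)\rangle)_{j,\ell}$, whose spectrum agrees with $\sigma(L_{\xi}^{\vep})\cap B_{c_{0}}(0)$. The goal at this step is to establish the structure \eqref{D-firstexp}, with the stated factors of $\i\kpa\xi$, $(\kpa\xi)^{2}$ and $\vep$ in front of smooth real coefficients. I would derive this by combining the Hamiltonian form $L_{\xi}^{\vep}=JA_{\xi}^{\vep}$ with $A_{\xi}^{\vep}$ symmetric, the conjugation $L_{-\xi}^{\vep}=\overline{L_{\xi}^{\vep}}$, and the parity/reflection symmetries of $(\zeta_{\vep},\vp_{\vep})$, which force each matrix element to be either purely real or purely imaginary at each order in $(\xi,\vep)$. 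Given this, the rescaling $\rM_{\xi}^{\vep}=(\i\kpa\xi)^{-1}\rP_{\xi}\rD_{\xi}^{\vep}\rP_{\xi}^{-1}$ of \eqref{def-newmatrix-1} extends smoothly to $(\vep,\xi)=(0,0)$, is real, and reduces the problem to studying the eigenvalues of a real $4\times 4$ matrix with $2\times 2$ diagonal blocks $\rM_{11},\rM_{22}$ and $\cO(\vep)$ off-diagonal blocks.

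The main obstacle is the fine analysis of $\rM_{\xi}^{\vep}$. At $(\vep,\xi)=(0,0)$ the eigenvalues of $\rM_{11},\rM_{22}$ are $\pm\sqrt{\alpha},\pm e_{*}$ respectively, which are well separated precisely when $e_{*}\ne\sqrt{\alpha}$; this holds throughout $\rm I\cup III$ by the inequality $e_{*}>1\ge\sqrt{\alpha}$ stated in Theorem \ref{thm-1}, and is the only place where the restriction to regions I and III enters, so the argument extends to region II once separation is verified. Using this separation I would first diagonalise $\rM_{22}$ by an explicit similarity and then seek a similarity of the form $I+\vep K+\cdots$ that annihilates the upper-right block; the leading-order Riccati equation reduces to a Sylvester equation whose solvability is exactly $\sigma(\rM_{11})\cap\sigma(\rM_{22})=\emptyset$, and the full nonlinear equation is then solved by the implicit function theorem. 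After this transformation $\rM_{\xi}^{\vep}$ is block-triangular with a lower-right block $\diag(X_{2}^{+},X_{2}^{-})+\cO(\vep^{2})$ (two positive reals) and an upper-left block $\rM_{11}-\vep^{2}\rQ\rG^{-1}\rM_{21}$. The eigenvalues of $L_{\xi}^{\vep}$ in $B_{c_{0}}(0)$ are therefore $\i\kpa\xi$ times those of these two $2\times 2$ real matrices; the two large ones, $\i\kpa\xi(X_{2}^{\pm}+\cO(\vep^{2}))$, are automatically purely imaginary. For the remaining block I would expand its characteristic polynomial in $(\vep,\xi)$ and show that its discriminant equals, up to a strictly positive smooth factor, $\tilde C(\alpha,\beta)\,\vep^{2}-(\text{positive})\,\xi^{2}$. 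When $\tilde C(\alpha,\beta)>0$ the discriminant is nonnegative for $|\xi|\ll 1$ and all four eigenvalues are imaginary, while when $\tilde C(\alpha,\beta)<0$ and $|\xi|<A|\vep|$ for a suitable $A=A(\alpha,\beta)$ the discriminant is strictly negative, producing a complex-conjugate pair and hence two eigenvalues of $L_{\xi}^{\vep}$ with nonzero real parts of size $\cO(|\vep\xi|)$. The most delicate bookkeeping in this final step is to verify that the leading coefficient of the discriminant in $\vep^{2}$ is precisely $w_{1}''(0)\,C(\alpha,\beta)$, which is exactly \eqref{def-index-real}; tracking the constants through the Sylvester/Riccati reduction back to the explicit $\omega_j$ and $P_\xi$ factors is the most error-prone part of the argument.
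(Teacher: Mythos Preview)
Your overall strategy matches the paper's proof almost step for step: start from the generalised kernel of $L_0^{\vep}$ (Proposition~\ref{prop-kernel}), extend in $\xi$ via the Kato transport operator, build the dual basis from linear combinations of $\{Jq_\ell^{\vep}\}$, use parity/reversibility to obtain the structure of $\rD_\xi^{\vep}$, rescale to the real matrix $\rM_\xi^{\vep}$, exploit the eigenvalue separation of the diagonal blocks to block-triangularise via a Sylvester-type equation solved by the implicit function theorem, and then read off the discriminant of the remaining $2\times 2$ block.

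There are, however, two concrete slips in the details you should correct. First, the limiting eigenvalues of the diagonal blocks at $(\vep,\xi)=(0,0)$ are not $\pm\sqrt{\alpha}$ and $\pm e_*$: from the entries in Lemma~\ref{lem-diagentry} one finds $\rM_{11}\to (1-e_*)\,\mathrm{Id}_{2\times 2}$ (a double eigenvalue) while $\rM_{22}$ has eigenvalues $1\pm\sqrt{\alpha}$. Your separation condition $e_*\ne\sqrt{\alpha}$ is nevertheless the right one, since $X_1^{\mu}-X_2^{\pm}\to -(e_*\pm\sqrt{\alpha})$. Second, and more seriously, the sign on the $\xi^2$ term in your discriminant is wrong. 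The actual leading-order discriminant (see \eqref{criteria}) is
\[
\tilde{\Delta}_1 \,=\, \kpa^{-2}w_1''(0)\big(w_1''(0)\,\xi^2 + 2C(\alpha,\beta)\,\vep^2\big) + \cO\big((\xi^2+\vep^2)(|\xi|+|\vep|)\big),
\]
i.e.\ proportional to $(w_1''(0))^2\xi^2 + 2\tilde{C}(\alpha,\beta)\vep^2$, with a \emph{positive} coefficient on $\xi^2$. With the sign you wrote, the stability conclusion for $\tilde{C}>0$ would fail outside the cone $|\xi|\lesssim|\vep|$, contradicting the theorem; with the correct sign, $\tilde{C}>0$ forces $\tilde{\Delta}_1>0$ for all small $(\vep,\xi)$, while $\tilde{C}<0$ makes $\tilde{\Delta}_1<0$ precisely in a cone $|\xi|<A|\vep|$. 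Once this is fixed, your argument is the paper's.
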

The general strategy for proving this result is, once again, to construct a representation matrix whose eigenvalues coincide with the spectrum of 
 $L_{\xi}^{\vep}$ near $0$. This will be achieved by constructing an appropriate basis and dual basis. Since zero is a spectral crossing of 
 $L_0^0$ with multiplicity four, the resulting matrix will be  $4\times 4$.
 To construct the  basis, one can start with the eigenfunctions of $L_{\xi}^0$  
 and extend them to the case $\vep\neq0$. However, this approach requires computing the expansions of 
 these basis functions in terms of the amplitude $\vep$ (at least to the second order), making it computationally demanding. 
 Therefore, we instead begin with a basis of $L_0^{\vep}$,  whose spectrum near the origin consists solely of $\{0\}$  with algebraic multiplicity 4 and geometric multiplicity 2. 
 Moreover, as is classical in modulation theory, the associated eigenfunctions are intimately connected to the structure of the background waves.
 

\begin{prop}[Generalized kernel of $L_0^{\vep}$] \label{prop-kernel}
Zero is an eigenvalue of $L_0^{\vep}$ with algebraic multiplicity four  and geometric multiplicity two, and there exist four (generalized) eigenfunctions 
${ q_1^{\vep}(0,\cdot), \, q_2^{\vep}(0,\cdot),\, q_3^{\vep}(0,\cdot),\, q_4^{\vep}(0,\cdot)}$ such that 
\begin{align*}
    L_{0}^{\vep} q_1^{\vep}(0,\cdot)&=0, \qquad  L_{0}^{\vep} q_2^{\vep}(0,\cdot)=\vep k_{\vep}\frac{\p_{\vep}k_{\vep}}{c\,\p_c k_{\vep}} q_1^{\vep}(0,\cdot); \\
     L_{0}^{\vep} q_3^{\vep}(0,\cdot)&=0, \qquad  L_{0}^{\vep} q_4^{\vep}(0,\cdot)=-q_3^{\vep}(0,\cdot).
\end{align*}
Moreover, $q_1^{\vep}(0,\cdot), q_2^{\vep}(0,\cdot)$ are  explicitly given by
\begin{align*}
    q_1^{\vep}(0,\cdot)&=\vep^{-1}
\begin{pmatrix}  \p_x \zeta_{\vep}\\\p_x \vp_{\vep}- Z_{\vep}\p_x \zeta_{\vep} \end{pmatrix}, \quad q_2^{\vep}(0,\cdot)=
    \begin{pmatrix}
     1 & 0 \\
      -Z_{\vep} & 1
   \end{pmatrix}
    \left(\p_{\vep}+\frac{\p_{\vep}k_{\vep}}{\p_c k_{\vep}} \begin{pmatrix} \p_c &0 \\0& c^{-1}\p_c(c\,\cdot)
    \end{pmatrix} \right)
    \begin{pmatrix}
      \zeta_{\vep} \\
      \vp_{\vep}
    \end{pmatrix}, 
    \end{align*}
while $q_3^{\vep}(0,\cdot), q_4^{\vep}(0,\cdot)$ are smooth in terms of $\vep$ and  expand as
\begin{align} \label{q3q4}
     q_3^{\vep}(0,\cdot)&=  \begin{pmatrix}
      0 \\
   1
   \end{pmatrix}+\vep \Lambda_0\, q_1^{\vep}+\vep^2\Lambda_1(\vep)q_1^{\vep}, \\
  q_4^{\vep}(0,\cdot)&= 
 \begin{pmatrix}
   \f{1}{\alpha} \\ 0
  \end{pmatrix}+ \vep \frac{\kappa}{2\alpha}  \begin{pmatrix}
   \ch (\kappa) \cos (x)\\ -\sh (\kappa) \sin (x)
   \end{pmatrix}+\vep^2  q_{42}^{\vep}(x) \,, 
   \label{exp-q40}
\end{align}
where $\Lambda_0=\frac{\kappa^2}{\alpha \sh(2\kappa)}$, $ \Lambda_1(\vep)$ is a smooth function in $\vep$. Moreover, $q_{42}^{\vep}(x)$ is smooth in $x$ with its first element being even and the second element being odd in $x$.
\end{prop}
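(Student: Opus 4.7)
The plan is to split the four-dimensional generalized kernel of $L_0^{\vep}$ into two Jordan chains of length two: a ``translational'' chain $(q_1^{\vep},q_2^{\vep})$ arising from the collision $\lambda_{-1}^{+}(0)=\lambda_{1}^{-}(0)=0$ of the $j=\pm 1$ Fourier eigenvalues of $L_0^0$, and a ``gauge'' chain $(q_3^{\vep},q_4^{\vep})$ arising from the $j=0$ constant mode, for which the proof of Lemma \ref{lem-crossing} already identified a Jordan block at $\vep=0$ with eigenvector $(0,1)^t$ and generalized eigenvector $(1/\alpha,0)^t$.

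For the translational chain, I would exploit translation invariance of the scaled profile equations satisfied by $(\zeta_{\vep},\vp_{\vep})$. Differentiating these equations with respect to $x$ and passing to the transformed variables $(V_1,V_2)=(\zeta,\vp-Z_{\vep}\zeta)$ used in \eqref{def-Lep} yields the kernel element $q_1^{\vep}(0,\cdot)$ directly; the factor $\vep^{-1}$ normalizes it to order one since $\p_x\zeta_{\vep}=\cO(\vep)$. To produce $q_2^{\vep}(0,\cdot)$, I would differentiate the profile equations simultaneously with respect to $\vep$ and the wave speed $c$. Because $k_{\vep}$ depends on both parameters, the specific combination in the statement is chosen so that the $\p_{\vep}k_{\vep}$ and $\p_c k_{\vep}$ contributions collapse into a single multiple of $q_1^{\vep}$ on the right-hand side; the chain rule then gives the identity $L_0^{\vep}q_2^{\vep}(0,\cdot)=\vep k_{\vep}(\p_{\vep}k_{\vep}/c\p_c k_{\vep})q_1^{\vep}(0,\cdot)$, and the prefactor $2\times 2$ matrix is the change-of-variables between $(\zeta,\vp)$ and $(V_1,V_2)$.

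For the gauge chain, I would set up a Lyapunov--Schmidt reduction around $\vep=0$. Expanding $L_0^{\vep}=L_0^0+\vep M_1+\vep^2 M_2+\cdots$ using the expansions of $\zeta_{\vep}$, $\vp_{\vep}$, $k_{\vep}$, $Z_{\vep}$, $v_{\vep}$ provided by Lemma \ref{lem-waveprofile}, I would seek $q_3^{\vep}$ and $q_4^{\vep}$ as formal power series solving $L_0^{\vep}q_3^{\vep}=0$ and $L_0^{\vep}q_4^{\vep}=-q_3^{\vep}$, with initial data $q_3^0=(0,1)^t$ and $q_4^0=(1/\alpha,0)^t$. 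At each order the solvability condition amounts to orthogonality of the right-hand side to the four-dimensional kernel of $(L_0^0)^{*}=-JL_0^0 J^{-1}$, which collapses to a finite set of Fourier-mode identities verifiable directly from the explicit leading-order profile. The residual freedom of adding multiples of $q_1^{\vep}$ is then used to bring the expansions to the clean form stated; solving the first-order equation mode by mode in Fourier space yields both the coefficient $\Lambda_0=\kpa^2/(\alpha\sh(2\kpa))$ and the explicit $\cos x$, $\sin x$ corrections appearing in \eqref{exp-q40}. The parity claim on $q_{42}^{\vep}$ follows from the $x\to -x$ symmetry (with $\zeta_{\vep}$ even and $\vp_{\vep}$ odd by Lemma \ref{lem-waveprofile}), which is preserved by the scheme at every order.

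The main obstacle I expect is twofold: first, carrying the Lyapunov--Schmidt expansion through to the required $\cO(\vep^2)$ order while bookkeeping the solvability constraints; and second, showing that the algebraic multiplicity at zero is \emph{exactly} four, i.e.\ that neither Jordan chain extends further. The cleanest route for this second point is via the Hamiltonian symmetry \eqref{Hamiltonianstructure}: the pairings $\langle J q_1^{\vep},q_2^{\vep}\rangle$ and $\langle J q_3^{\vep},q_4^{\vep}\rangle$ are nonzero at $\vep=0$---the first proportional to the Krein signature of the $j=\pm 1$ modes computed in the spirit of \eqref{Krein}, and the second, by direct calculation, equal to $2\pi/\alpha$---and a nonvanishing pairing of a kernel element with a generalized eigenvector obstructs any further extension of the chain, pinning the algebraic multiplicity at four.
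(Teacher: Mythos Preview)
Your proposal is essentially correct and follows the same approach as the paper: translation invariance and differentiation in $(\vep,c)$ for the chain $(q_1^{\vep},q_2^{\vep})$, and a Lyapunov--Schmidt reduction around the constant-mode Jordan block for $(q_3^{\vep},q_4^{\vep})$, with $\Lambda_0$ determined by the first-order solvability condition and parity propagated by the $x\mapsto -x$ symmetry.

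Two small corrections. First, the kernel of $(L_0^0)^*$ is three-dimensional, not four: the $j=0$ mode contributes one genuine eigenvector $(0,1)^t$ and one \emph{generalized} eigenvector $(1/\alpha,0)^t$, so only $Jq_1^0,Jq_2^0,Jq_3^0$ span $\ker(L_0^0)^*$, and by parity only the pairing with $Jq_2^0$ is nontrivial at each step (this is exactly how $\Lambda_0$ and $\Lambda_1(\vep)$ get fixed). Second, at the $\vep^2$ level the paper does not work with formal power series but closes the construction with the Implicit Function Theorem applied to the single remaining scalar solvability condition $F_2(\vep,\Lambda_1)=0$, which is what guarantees smoothness of $\Lambda_1(\vep)$ and $q_{42}^{\vep}$ in $\vep$; your outline should be read in that spirit. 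Finally, your symplectic-pairing argument for ``exactly four'' is valid, but the paper obtains this more cheaply from the stability of the rank of the spectral projector $\Pi_0^{\vep}$ (Proposition~\ref{prop-balls}), which already forces the generalized kernel to be four-dimensional for all small $\vep$.
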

\begin{rmk}
    Note that the background waves $(\zeta_{\vep},\vp_{\vep})$ depend on the wave speed $c$ 
   through the parameters 
    $\alpha=\f{gh}{c^2}$ and $\beta=\f{T}{hc^2}$ 
    since the wave numbers $k_{\vep}$ depend on these two parameters.
\end{rmk}
The kernel $q_1^{\vep}(0,\cdot)$ can be found by using the translational invariance of the profile equation. Since $G_0[\zeta_{\vep}] (1)=0$, the profile equation is invariant under constant shifts of the velocity potential. This results in another kernel element $(0,1)^t$. 
Being the linear combination of $q_1^{\vep}(0,\cdot)$
 and $(0,1)^t$, we see that $q_3^{\vep}(0,\cdot)$ also belongs to the kernel of $L_0^{\vep}$.
The generalized eigenfunction  $q_2^{\vep}(0, \cdot)$
 can be found by differentiating the original profile equation before normalization in terms of the wave speed $c$ and the amplitude $\vep$, while $q_4^{\vep}(0, \cdot)$ is constructed by using the Lyapunov--Schmidt method. We will give the proof in Appendix \ref{appen-kernel}.

From the above proposition  and 
expansions of the profiles in \eqref{expan-profile}, we derive the following basis expansion.

\begin{cor}[Expansion of the basis in $\vep$]\label{cor-exp-0}
It holds that 
\begin{align*}
q_1^{\vep}(0,\cdot)&= \bpm -\sh(\kappa)\sin (x)\\ \ch(\kappa) \cos (x)\epm +\vep \bpm \p_x\zeta_2\\\p_x\vp_2-\kappa \sh(\kappa) \sin (x)\, \p_x\zeta_1 \epm  +\vep^2  \begin{pmatrix}
  \mathrm{ odd }\\ \mathrm{ even }
 \end{pmatrix}, \\ 
 q_2^{\vep}(0,\cdot)&=\bpm \sh(\kappa)\cos(x)\\ \ch(\kappa) \sin (x)\epm +\vep \bpm 2\,\zeta_2\\ 2\vp_2-\kappa \sh(\kappa) \sin (x)\, \zeta_1 \epm
 +\vep^2   \begin{pmatrix}
  \mathrm{ even }\\ \mathrm{ odd }
 \end{pmatrix}, \\
 q_3^{\vep}(0,\cdot)&=\bpm 0\\ 1 \epm +\vep \Lambda_0  \bpm -\sh(\kappa)\sin (x)\\ \ch(\kappa) \cos (x)\epm +
 \vep^2  \begin{pmatrix}
   \mathrm{ odd }\\ \mathrm{ even }
 \end{pmatrix},\\ 
\end{align*}
and $q_4^{\vep}(0,\cdot)$ expands as in \eqref{exp-q40}. We refer to  \eqref{def-expzetavp} for the definition of $\zeta_1,\zeta_2,\vp_2$.
\end{cor}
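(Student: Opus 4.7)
The plan is to prove Corollary \ref{cor-exp-0} by direct substitution: Proposition \ref{prop-kernel} provides explicit formulas for the four basis vectors in terms of the wave profiles $(\zeta_{\vep},\vp_{\vep})$, the wave number $k_{\vep}$, and the auxiliary quantity $Z_{\vep}$, so expanding each of these inputs in $\vep$ and collecting like powers should directly produce the claimed expansions.

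I would begin by recording the bifurcation expansions
\[
\zeta_{\vep}=\vep\,\zeta_1+\vep^2\,\zeta_2+O(\vep^3),\qquad \vp_{\vep}=\vep\,\vp_1+\vep^2\,\vp_2+O(\vep^3),
\]
from \eqref{def-expzetavp} and \eqref{expan-profile}, with leading modes fixed by the kernel of $L_0^0$ at $\vep=0$ as $\zeta_1=\sh(\kappa)\cos x$ and $\vp_1=\ch(\kappa)\sin x$. The reflection symmetry $x\mapsto -x$ of the profile equation forces each $\zeta_n$ to be even and each $\vp_n$ to be odd in $x$; it follows that $Z_{\vep}$ is odd in $x$ and of order $O(\vep)$, since both the numerator and denominator in \eqref{defZv} expand accordingly. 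A short computation using the explicit Fourier form of $G_0$ and the harmonic extension of $\vp_1$ into the strip identifies the leading coefficient of $Z_{\vep}/\vep$ evaluated on the surface as $\kappa\sh(\kappa)\sin x$; this is the single quantity needing genuine computation, and every other factor is then a routine Taylor expansion.

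Plugging these inputs into $q_1^{\vep}(0,\cdot)=\vep^{-1}(\partial_x\zeta_{\vep},\partial_x\vp_{\vep}-Z_{\vep}\partial_x\zeta_{\vep})^t$ immediately yields the leading term $(-\sh(\kappa)\sin x,\ch(\kappa)\cos x)^t$, the $\vep$-correction $(\partial_x\zeta_2,\partial_x\vp_2-\kappa\sh(\kappa)\sin x\,\partial_x\zeta_1)^t$, and an $O(\vep^2)$ remainder whose parity $(\mathrm{odd},\mathrm{even})$ follows from the parity rules for $\zeta_n$, $\vp_n$ and $Z_{\vep}$. For $q_2^{\vep}(0,\cdot)$ the $c$-derivative block in the formula of Proposition \ref{prop-kernel} acts on profiles that vanish at $\vep=0$, so it contributes only from order $\vep$ onward; at leading order one therefore obtains $(\zeta_1,\vp_1)^t=(\sh(\kappa)\cos x,\ch(\kappa)\sin x)^t$, differentiating once more in $\vep$ produces the $2\zeta_2$, $2\vp_2$ terms, and the lower-triangular matrix $\bpm 1 & 0\\ -Z_{\vep} & 1\epm$ injects the $-\kappa\sh(\kappa)\sin x\,\zeta_1$ contribution in the second slot, with the $O(\vep^2)$ parity $(\mathrm{even},\mathrm{odd})$ inherited from the same parity rules.

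For $q_3^{\vep}(0,\cdot)$ I would substitute the just-derived expansion of $q_1^{\vep}(0,\cdot)$ into \eqref{q3q4}, which directly produces the stated form; the $(\mathrm{odd},\mathrm{even})$ parity of the $O(\vep^2)$ remainder is inherited from that of $q_1^{\vep}$ since $\Lambda_1(\vep)$ is a scalar. The expansion of $q_4^{\vep}(0,\cdot)$ is already given in \eqref{exp-q40} so nothing further is required beyond noting the parity of $q_{42}^{\vep}$. The only step demanding some care is identifying the leading profile of $Z_{\vep}/\vep$ on the surface, since $Z_{\vep}$ couples the wave profiles through the Dirichlet--Neumann operator; everything else reduces to tracking orders in $\vep$ and $x$-parities.
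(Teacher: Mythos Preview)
Your proposal is correct and follows precisely the approach the paper intends: the paper states only that the corollary follows from Proposition~\ref{prop-kernel} together with the profile expansions \eqref{expan-profile}, and your plan is a faithful fleshing-out of that derivation. One minor sharpening: for $q_2^{\vep}$ the $c$-derivative block actually contributes only at order $\vep^2$, not $\vep$, since $\partial_{\vep}k_{\vep}=2\vep k_2+O(\vep^3)$ and $\partial_c(\zeta_{\vep},\vp_{\vep})=O(\vep)$; this is what makes the $O(\vep)$ coefficient come out cleanly as $(2\zeta_2,2\vp_2)^t$ plus the $Z_{\vep}$-correction, exactly as you conclude.
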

\begin{cor}\label{corD0ep}
 It holds that 
 \begin{align*}
 L_0^{\vep} (q_1^{\vep}, q_2^{\vep}, q_3^{\vep}, q_4^{\vep})(0,\cdot)=(q_1^{\vep}, q_2^{\vep}, q_3^{\vep}, q_4^{\vep})(0,\cdot)\rD_0^{\vep}
 \end{align*}
 with the constant-coefficient $(4\times 4)$ matrix 
\begin{align*}
    \rD_0^{\vep}= \bpm \rA_0^{\vep}  & 0 \\  0 &  F_0^{\vep}\epm  \quad \text { with }
\rA_0^{\vep}=\bpm  0 & \vep k_{\vep}\frac{\p_{\vep}k_{\vep}}{c\,\p_c k_{\vep}} \\ 0& 0\epm, \quad 
\rF_0^{\vep}= \bpm 0 & -1\\ 0& 0\epm . 
\end{align*}

\end{cor}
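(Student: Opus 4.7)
The claim is essentially a direct rewriting of Proposition \ref{prop-kernel} in matrix form, so the plan is to simply collect the four action identities into a single block matrix equation. First I would write out the four relations
\begin{align*}
L_0^{\vep} q_1^{\vep}(0,\cdot) &= 0, & L_0^{\vep} q_2^{\vep}(0,\cdot) &= \vep k_{\vep}\tfrac{\p_{\vep}k_{\vep}}{c\,\p_c k_{\vep}}\, q_1^{\vep}(0,\cdot), \\
L_0^{\vep} q_3^{\vep}(0,\cdot) &= 0, & L_0^{\vep} q_4^{\vep}(0,\cdot) &= -q_3^{\vep}(0,\cdot),
\end{align*}
exactly as supplied by Proposition \ref{prop-kernel}, and then read off the coefficients of each image vector in the basis $(q_1^{\vep},q_2^{\vep},q_3^{\vep},q_4^{\vep})(0,\cdot)$.

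The only point worth noting is that the image of $q_2^{\vep}(0,\cdot)$ has no component along $q_3^{\vep}(0,\cdot)$ or $q_4^{\vep}(0,\cdot)$, and likewise the image of $q_4^{\vep}(0,\cdot)$ has no component along $q_1^{\vep}(0,\cdot)$ or $q_2^{\vep}(0,\cdot)$; this is precisely what the block-diagonal structure of $\rD_0^{\vep}$ encodes. Both facts are automatic from the four scalar identities above, since the right-hand sides are scalar multiples of a single basis vector. Thus the coefficient matrix, column by column, is
\[
\rD_0^{\vep} \,=\, \begin{pmatrix} 0 & \vep k_{\vep}\tfrac{\p_{\vep}k_{\vep}}{c\,\p_c k_{\vep}} & 0 & 0 \\ 0 & 0 & 0 & 0 \\ 0 & 0 & 0 & -1 \\ 0 & 0 & 0 & 0 \end{pmatrix},
\]
which is exactly the block-diagonal form stated, with $\rA_0^{\vep}$ and $\rF_0^{\vep}$ as described.

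There is no genuine obstacle here; the entire content lies in Proposition \ref{prop-kernel}. The corollary is introduced so that in the subsequent Kato perturbation argument near the origin we may work with the compact matrix object $\rD_0^{\vep}$ rather than referring repeatedly to the four individual relations. Consequently the proof is a one-line verification and does not require any further computation.
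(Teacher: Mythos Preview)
Your proposal is correct and matches the paper's treatment: the corollary is stated without proof in the paper precisely because it is an immediate rephrasing of the four action identities in Proposition~\ref{prop-kernel} into matrix form, exactly as you describe.
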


\subsection{Construction of the basis and dual basis}

With a basis for the generalized kernel of 
$L_0^{\vep}$ in hand, we proceed by employing the Kato extension operator in the Floquet parameter  $\xi$ 
to construct a basis for the eigenspaces of $L_{\xi}^{\vep}$ associated with the four eigenvalues near zero. 

To do so, let us first define the spectral projector 
\begin{align*}
    \Pi_{\xi}^{\vep}= \oint_{\p B_{c_0}(0)} \big(\lambda I-L_{\xi}^{\vep}\big)^{-1}\, \d \lambda
\end{align*}
where $|\xi|\leq \xi_0, |\vep|\leq \tilde{\vep}_0$. By the proof of Proposition \ref{prop-balls}, $\p B_{c_0}(0) \cap \sigma(L_{\xi}^{\vep})=\emptyset$ and there are exactly four elements of $\sigma(L_{\xi}^{\vep})$ inside of $B_{c_0}(0)$.

We then define the Kato extension operator $\cU^{\vep}(\xi)$ by solving the ODE
\begin{align}\label{def-extop-xi}
\p_\xi\mathcal{U}^{\vep}(\xi)
=[\p_\xi\Pi_{\xi}^{\vep}, \Pi_{\xi}^{\vep}]\ \mathcal{U}^{\vep}(\xi)\,, \qquad
\mathcal{U}^{\vep}(0)=\Id.
\end{align}
Thanks to \eqref{resov-1}, \eqref{resov-2},
it can be verified that there is $\tilde{\vep}_2\in (0,\tilde{\vep}_0]$ ($\tilde{\vep}_0$ being the one appearing in the statement of Proposition \ref{prop-balls}),
such that the following estimates holds
\begin{align*}
    \sup_{|\vep|\leq \tilde{\vep}_2 } \sup_{|\xi|\leq \xi_0} \|\Pi_{\xi}^{\vep}\|_{B(Y_{\per})}+\|\p_{\xi}\Pi_{\xi}^{\vep}\|_{B(Y_{\per})}<+\infty.
\end{align*}
This ensures that $\p_\xi\mathcal{U}^{\vep}(\xi)$ exists on $\xi\in[-\xi_0,\xi_0]$ uniformly in $\vep\in [-\tilde{\vep}_2,\tilde{\vep}_2]$.
We also refer to Proposition \ref{prop-extop} for some important properties of the extension operator $\cU^{\vep}(\xi)$.

Define 
\beq \label{def-basis}
q_j^{\vep}(\xi,\cdot)=\cU^{\vep}(\xi) q_j^{\vep}(0,\cdot), \quad \, j=1,2,3,4,
\eeq
which, by Proposition \ref{prop-extop} (1), form a basis of $\mathrm{Ran}(\Pi_{\xi}^{\vep})$, which are smooth in $\xi$ and $\vep$. 
To obtain the matrix representation of $L_{\xi}^{\vep}$ relative to this basis, we need to find the dual basis. By the Hamiltonian symmetry \eqref{HS-Lxi}, 
$\{ Jq_1^{\vep}, Jq_2^{\vep}, Jq_3^{\vep}, J q_4^{\vep}\}(\xi,\cdot)$ spans the eigenspaces of $(L_{\xi}^{\vep})^{*}$ corresponding to the eigenvalues inside  $B_{\ep_0}(0)$ and we can therefore find the dual basis from linear combinations of them. 
Thanks to Corollary \ref{cor-exp-0}, we have
\begin{align*}
    \langle Jq_j^{\vep}(0,\cdot), q_{\ell}^{\vep}(0,\cdot)\rangle =0, \qquad (j,\ell)\in \{(1,3), (3,1), (2,4), (4,2) \}.
\end{align*}
By \eqref{preseve-syminner}, the same identity holds for sufficiently small nonzero $\xi$:
\beq \label{orthogonality}
    \langle Jq_j^{\vep}(\xi,\cdot), q_{\ell}^{\vep}(\xi,\cdot)\rangle =0, \qquad (j,\ell)\in \{(1,3), (3,1), (2,4), (4,2) \}.
\eeq
Therefore, we define the dual basis in the following way: 
\beq \label{def-dualbasis}
\begin{aligned}
\tilde{q}_1^{\vep}(\xi,\cdot)=a_{11}({\vep})Jq_2^{\vep}(\xi,\cdot) + a_{12}({\vep})Jq_4^{\vep}(\xi,\cdot), & \qquad \tilde{q}_2^{\vep}(\xi,\cdot)=a_{21}({\vep})Jq_1^{\vep}(\xi,\cdot) + a_{22}({\vep}) Jq_3^{\vep}(\xi,\cdot),\\
\tilde{q}_3^{\vep}(\xi,\cdot)=a_{31}({\vep})Jq_2^{\vep}(\xi,\cdot) + a_{32}({\vep})Jq_4^{\vep}(\xi,\cdot), & \qquad \tilde{q}_4^{\vep}(\xi,\cdot)=a_{41}({\vep})Jq_1^{\vep}(\xi,\cdot) + a_{42}({\vep})Jq_3^{\vep}(\xi,\cdot).
\end{aligned}
\eeq
The ($\vep$-dependent) coefficients $a_{jk}({\vep})$ ($j=1,2,3,4;k=1,2$) are determined from the conditions 
\begin{align}
    \langle \tilde{q}_j^{\vep}, q_j^{\vep} \rangle (\xi) 
    =1, \quad 
    \langle \tilde{q}_j^{\vep}, q_{\ell}^{\vep} \rangle(\xi) =0, \qquad (j,\ell)\in \{(1,3), (3,1), (2,4), (4,2) \}
\end{align}
 (the other combinations vanish by \eqref{orthogonality} and the skew-symmetry of $J$). 
By \eqref{preseve-syminner} and Corollary \ref{cor-exp-0},
we can compute these coefficients explicitly. 
Define 
\begin{align*}
 b_{j\ell}({\vep})=\langle Jq_j^{\vep}(\xi,\cdot), q_{\ell}^{\vep}(\xi,\cdot)\rangle=\langle Jq_j^{\vep}(0,\cdot), q_{\ell}^{\vep}(0,\cdot)\rangle\in \mR. 
\end{align*}
direct computations show that 
\begin{align}\label{compaij}
\bpm a_{11} & a_{31} \\
a_{12}& a_{32}\epm(\vep) =\gamma({\vep}) \bpm -b_{34}& b_{14} \\ b_{32} & -b_{12}\epm (\vep), \qquad \bpm a_{21} & a_{41} \\
a_{22}& a_{42}\epm(\vep)=\gamma({\vep}) \bpm b_{34}& -b_{32} \\ -b_{14} & b_{12}\epm (\vep)
\end{align}
where $\gamma(\vep)=\big(b_{12}b_{34}-b_{14}b_{32}\big)^{-1}(\vep)$. 

We summarize their expansions in $\vep$ in the following lemma, which
is needed for the latter proof.
\begin{lem}
It holds that 
\beq\label{expan-bjk}
\begin{aligned}
    & b_{12}(\vep)=\pi \sh(2\kpa)+\cO(\vep^2),  \quad   b_{14}(\vep)= \frac{\pi\kappa\ch(2\kpa)} {2\alpha}\vep+\cO(\vep^2),\\
   &  b_{32}(\vep)=\cO(\vep^2), \qquad\qquad\qquad b_{34}(\vep)=\frac{2\pi}{\alpha}+\cO(\vep^2). \\
\end{aligned}
\eeq
  As a consequence, $a_{jk}(\vep)\in\mR$ satisfy
  \beq\label{expan-aij}
\begin{aligned}
 & a_{11}(\vep)=-a_{21}(\vep)=-\frac{1}{\pi \sh(2\kpa)}+\cO(\vep^2), \qquad 
a_{12}(\vep)=-a_{41}(\vep)=\cO(\vep^2),\\
& a_{31}(\vep)=-a_{22}(\vep)=\frac{\kappa}{4\pi \tah(2\kpa)}\vep+\cO(\vep^2), \qquad a_{32}(\vep)=-a_{42}(\vep)=-\f{\alpha}{2\pi}+\cO(\vep^2).
\end{aligned}
\eeq
\end{lem}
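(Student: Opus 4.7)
The plan is to derive \eqref{expan-bjk} by direct computation from Corollary \ref{cor-exp-0} and \eqref{exp-q40}, and then to obtain \eqref{expan-aij} as an immediate consequence of \eqref{compaij}. First, I would evaluate $b_{j\ell}(0) = \langle Jq_j^0, q_\ell^0\rangle$ by inserting the leading basis vectors $q_1^0 = (-\sh(\kpa)\sin x, \ch(\kpa)\cos x)^{t}$, $q_2^0 = (\sh(\kpa)\cos x, \ch(\kpa)\sin x)^{t}$, $q_3^0 = (0, 1)^{t}$, and $q_4^0 = (1/\alpha, 0)^{t}$, and using $\int_0^{2\pi}\cos^2 x\,\d x = \int_0^{2\pi}\sin^2 x\,\d x = \pi$ together with $\int_0^{2\pi}\cos x\,\d x = 0$ to obtain $b_{12}(0) = \pi\sh(2\kpa)$, $b_{34}(0) = 2\pi/\alpha$, and $b_{14}(0) = b_{32}(0) = 0$.

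For the $\cO(\vep)$ corrections, the key input is the Fourier structure of the Stokes coefficients: $\zeta_1, \vp_1$ contain only the first harmonic while $\zeta_2, \vp_2$ contain only the zeroth and second harmonics. Orthogonality of these modes against $\cos x, \sin x$ forces every cross-term in $\langle Jq_1^0, q_2^{(1)}\rangle + \langle Jq_1^{(1)}, q_2^0\rangle$ to vanish, killing the linear term of $b_{12}$; the same orthogonality together with $\int_0^{2\pi}\cos x\,\d x = 0$ kills the linear term of $b_{34}$. For $b_{14}$, the two surviving contributions evaluate to $\langle Jq_1^0, q_4^{(1)}\rangle = \pi\kpa/(2\alpha)$ (using $\ch^2(\kpa) - \sh^2(\kpa) = 1$) and $\langle Jq_1^{(1)}, q_4^0\rangle = \pi\kpa\sh^2(\kpa)/\alpha$, and combine via $\ch(2\kpa) = 1 + 2\sh^2(\kpa)$ to produce the stated coefficient $\pi\kpa\ch(2\kpa)/(2\alpha)$.

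The main obstacle is the vanishing of the $\cO(\vep)$ term of $b_{32}$: since $q_3^{\vep}(0,\cdot) = (0,1)^{t} + \vep\Lambda_0 q_1^{\vep}(0,\cdot) + \cO(\vep^2)$, the cross-term $\vep\Lambda_0 b_{12}(0)$ is nonzero and is \emph{not} eliminated by Fourier orthogonality, so a direct cancellation would require evaluating $\int_0^{2\pi}\zeta_2\,\d x$. I would instead exploit the Hamiltonian structure $L_0^{\vep} = JA_0^{\vep}$ with $A_0^{\vep}$ symmetric. Proposition \ref{prop-kernel} combined with $J^{-1} = -J$ yields $A_0^{\vep} q_4^{\vep}(0,\cdot) = Jq_3^{\vep}(0,\cdot)$ and $A_0^{\vep}q_2^{\vep}(0,\cdot) = -\vep k_{\vep}\tfrac{\p_{\vep}k_{\vep}}{c\p_c k_{\vep}}\, Jq_1^{\vep}(0,\cdot)$. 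The symmetry of $A_0^{\vep}$ and the skew-symmetry identity $\langle u, Jv\rangle = -\langle Ju, v\rangle$ then give
\begin{align*}
b_{32}(\vep) = \langle A_0^{\vep}q_4^{\vep}(0,\cdot), q_2^{\vep}(0,\cdot)\rangle = \langle q_4^{\vep}(0,\cdot), A_0^{\vep}q_2^{\vep}(0,\cdot)\rangle = -\vep\, k_{\vep}\tfrac{\p_{\vep}k_{\vep}}{c\p_c k_{\vep}}\, b_{14}(\vep),
\end{align*}
which, combined with $b_{14}(\vep) = \cO(\vep)$, yields $b_{32}(\vep) = \cO(\vep^2)$.

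Finally, with \eqref{expan-bjk} established, the determinant $b_{12}b_{34} - b_{14}b_{32} = 2\pi^2\sh(2\kpa)/\alpha + \cO(\vep^2)$ is bounded away from zero, so $\gamma(\vep) = \alpha/(2\pi^2\sh(2\kpa)) + \cO(\vep^2)$ is smooth in $\vep$. Substituting into \eqref{compaij} and simplifying via $\ch(2\kpa)/\sh(2\kpa) = 1/\tah(2\kpa)$ produces the eight identities in \eqref{expan-aij}; reality of each $a_{jk}(\vep)$ follows from that of the $b_{j\ell}(\vep)$, itself a consequence of the reality of the basis constructed in Proposition \ref{prop-kernel}.
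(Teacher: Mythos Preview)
Your proof is correct and, for the most part, follows the paper's own route: direct computation of the $b_{j\ell}$ from the expansions in Corollary~\ref{cor-exp-0} and \eqref{exp-q40}, followed by substitution into \eqref{compaij}. The one substantive difference is your treatment of $b_{32}$. The paper verifies $b_{32}(\vep)=\cO(\vep^2)$ by an explicit computation that uses the constant Fourier coefficient $-\kappa^2/(4\alpha)$ of $\zeta_2$ (see \eqref{def-expzetavp}): the two $\cO(\vep)$ contributions $\Lambda_0\langle Jq_2^0,q_1^0\rangle$ and $\langle Jq_2^{(1)},(0,1)^t\rangle$ are each nonzero but cancel exactly. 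Your argument instead exploits the Hamiltonian structure $L_0^{\vep}=JA_0^{\vep}$ together with the Jordan-chain relations of Proposition~\ref{prop-kernel} to obtain the exact identity $b_{32}(\vep)=-\vep k_{\vep}\tfrac{\p_{\vep}k_{\vep}}{c\p_c k_{\vep}}\,b_{14}(\vep)$, and then reads off the order from $\p_{\vep}k_{\vep}=\cO(\vep)$. This is a cleaner structural argument: it avoids knowing the precise mean of $\zeta_2$, and in fact yields the sharper estimate $b_{32}(\vep)=\cO(\vep^3)$. The paper's direct computation, on the other hand, makes the cancellation completely explicit and serves as a consistency check on the profile expansion.
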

\begin{proof}
The expansions \eqref{expan-aij} follow from  relations \eqref{compaij} and \eqref{expan-bjk}, 
which  in turn follow from  
direct computations and the expansions of $q_{j}^{\vep}(0,\cdot)$ in Corollary \ref{cor-exp-0}. Let us only verify that $b_{32}(\vep)=-b_{23}(\vep)=\cO(\vep^2)$ which will be important in our subsequent analysis: 
\begin{align*}
    b_{23}(\vep)&=\langle Jq_2^{\vep}(0,\cdot), q_{3}^{\vep}(0,\cdot)\rangle\\
    &=\vep\left( \bigg\langle \bpm \ch(\kappa)\sin(x) \\ -\sh(\kappa)\cos(x)\epm  , \frac{\kappa^2}{\alpha \sh(2\kappa)}\bpm -\sh(\kappa)\sin (x)\\ \ch(\kappa) \cos (x)\epm    \bigg\rangle+\bigg\langle  \bpm * \\\f{\kpa^2}{2\alpha}\epm, \bpm 0 \\1 \epm \bigg\rangle \right)+\cO(\vep^2)\\
    &=\cO(\vep^2)\, .
\end{align*}
\end{proof}

\subsection{Analysis of the representation matrix}
Having established the basis and its dual, we are now in a position to compute the representation matrix in this basis. 
Define the $(4\times 4)$ matrix
$\rD_{\xi}^{\vep}$ such that
\begin{align*}
 L_{\xi}^{\vep} (q_1^{\vep}, q_2^{\vep}, q_3^{\vep}, q_4^{\vep})(\xi,\cdot)=(q_1^{\vep}, q_2^{\vep}, q_3^{\vep}, q_4^{\vep})(\xi,\cdot)\rD_{\xi}^{\vep}.
 \end{align*}
By taking suitable inner products of the above identity with the dual basis $\{\tilde{q}_{j}^{\vep}(\xi,\cdot)\}$, it is found that 
\begin{align*}
  \rD_{\xi}^{\vep}= \big(\langle  \tilde{q}_{j}^{\vep}(\xi,\cdot),  L_{\xi}^{\vep} q_{\ell}^{\vep}(\xi,\cdot)\rangle \big)_{{1 \leq j,\ell \leq 4}}\coloneqq (d_{j\ell}(\vep, \xi))_{{1 \leq j,\ell \leq 4}}.  
\end{align*}

We can, in fact, find the expansions of the entries $d_{j\ell}(\vep, \xi)$ in $\xi$ and $\vep$, using the definitions \eqref{def-basis}, \eqref{def-dualbasis}. However, it is not necessary to compute all of these expansions to determine whether the eigenvalues of $\rD_{\xi}^{\vep}$ are purely imaginary. 

The first step is to get the the expression of 
$\rD_{\xi}^{\vep}$ when $\xi$ or $\vep$ vanishes. 
We have already know $\rD_{0}^{\vep}$ from 
Corollary \ref{corD0ep}, so we now focus on determining $\rD_{\xi}^{0}$. 

\subsubsection{Computation of $\rm D_{\xi}^{0}$ }
When $\vep=0$, the operator $L_0^{\vep}$ has constant coefficient, so we can take benefit of the Fourier transform. Denote 
\begin{align*}
p_{\pm 1}^{\mp}(\xi,x)=e^{\pm \i\, x} \bpm \pm \i \\ \omega_{\pm1}(\xi) \epm, 
\qquad p_{0}^{\pm}(\xi,x)=\bpm  \mp \i \sqrt{\kappa\xi \tah(\kpa\xi)}\\ \sqrt{\alpha+\beta \kpa^2\xi^2}\epm 
\end{align*}
where $\xi\neq 0$ and $\omega_j(\xi)$ is defined in \eqref{defomegaj}.
Then 
\begin{align*}
L_{\xi}^{0} \big(p_1^{-}, p_{-1}^{+}, p_{0}^{-}, p_{0}^{+}\big)=\big(p_1^{-}, p_{-1}^{+}, p_{0}^{-}, p_{0}^{+}\big)\,\diag\, (\lambda_1^{-}, \lambda_{-1}^{+}, \lambda_0^{-}, \lambda_0^{+}\big)(\xi)
\end{align*}
 where $\lambda_{j}^{\pm}(\xi)=\i \big(\kappa(j+\xi)\pm w_j(\xi)\big)$ are the eigenvalues of 
 $L_{\xi}^0$ defined in \eqref{defspec-0}. To get
 $\rD_{\xi}^0$, we need to relate $q_j^0(\xi,\cdot)$ ($j=1,2,3,4$) with $p_1^{-}(\xi,\cdot),p_{-1}^{+}(\xi,\cdot),p_{0}^{\pm}(\xi,\cdot)$. 
 \begin{lem}\label{transp-basis}
 There are $(2\times 2)$ matrices $T_0, T_1$ such that 
\begin{align*}
  (q_3^0,q_4^0)(\xi,\cdot)=(p_{0}^{-},p_{0}^{+})(\xi,\cdot)\,T_0\, , \qquad   (q_1^0,q_2^0)(\xi,\cdot)=(p_{1}^{-},p_{-1}^{+})(\xi,\cdot)\,T_1,
\end{align*}
  where 
\begin{align*}
T_0= \f12 \bpm \f{1}{\sqrt{\alpha+\beta\kpa^2\xi^2}} & \frac{-\i}{\alpha\sqrt{\kpa\xi\tah(\kpa\xi)}}\\ \f{1}{\sqrt{\alpha+\beta\kpa^2\xi^2}} & \frac{\i}{\alpha\sqrt{\kpa\xi\tah(\kpa\xi)}}\epm, \quad T_1= \mathrm{diag}\big(a_1(\xi), a_{-1}(\xi)\big)\frac12 \bpm 1 & -\i\\ 1& \i \epm 
\end{align*}
for some smooth coefficients $a_1(\xi), a_{-1}(\xi)$. 
\end{lem}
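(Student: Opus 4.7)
The plan is to exploit the fact that $L_\xi^0$ has constant coefficients: for each $n\in\mathbb{Z}$ the Fourier-mode projector $\cP_n$ commutes with $L_\xi^0$, and therefore with its resolvent, with $\Pi_\xi^0$ and with $\partial_\xi\Pi_\xi^0$. By uniqueness of solutions to the Kato ODE \eqref{def-extop-xi}, this forces $\cU^0(\xi)\cP_n=\cP_n\cU^0(\xi)$ for every $n$, so $\cU^0(\xi)$ acts mode-by-mode. For $|\xi|\le\xi_0$ only modes $n\in\{-1,0,+1\}$ contribute to the spectrum of $L_\xi^0$ inside $B_{c_0}(0)$: mode $0$ carries the pair $\lambda_0^\pm(\xi)$, while modes $\pm 1$ each carry the single eigenvalue $\lambda_{\pm 1}^\mp(\xi)$. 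This observation organises the whole computation.

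First I handle $q_3^0,q_4^0$. On the two-dimensional constant subspace (mode $0$), both $\lambda_0^\pm(\xi)$ lie inside $B_{c_0}(0)$, so $\Pi_\xi^0$ restricted there equals the identity. Hence $\partial_\xi\Pi_\xi^0|_{\mathrm{mode}\ 0}=0$ and the Kato ODE trivialises, giving $\cU^0(\xi)|_{\mathrm{mode}\ 0}=\mathrm{Id}$. Since $q_3^0(0,\cdot)=(0,1)^t$ and $q_4^0(0,\cdot)=(1/\alpha,0)^t$ are constants, I conclude $q_3^0(\xi,\cdot)=(0,1)^t$ and $q_4^0(\xi,\cdot)=(1/\alpha,0)^t$ for every $\xi$. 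Solving the two elementary $2\times 2$ linear systems that express these vectors as combinations of $p_0^\pm(\xi,\cdot)$ then directly produces the claimed matrix $T_0$.

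Next I handle $q_1^0,q_2^0$. On each mode $\pm 1$ the spectral subspace of $L_\xi^0$ inside $B_{c_0}(0)$ is one-dimensional, spanned by $p_{\pm 1}^\mp(\xi,\cdot)$, so the restriction of $\cU^0(\xi)$ to the line $\mathbb{C}\,p_{\pm 1}^\mp(0,\cdot)$ is scalar multiplication by some smooth $\mu_{\pm 1}(\xi)$ with $\mu_{\pm 1}(0)=1$. Using the leading term of Corollary \ref{cor-exp-0} together with $\omega_{\pm 1}(0)=1/\tah(\kappa)$ (forced by the dispersion relation $\alpha+\beta\kappa^2=\kappa/\tah(\kappa)$), a direct computation gives
\[
q_1^0(0,\cdot)=\tfrac{\sh(\kappa)}{2}\bigl(p_1^-(0,\cdot)+p_{-1}^+(0,\cdot)\bigr),\qquad q_2^0(0,\cdot)=\tfrac{i\,\sh(\kappa)}{2}\bigl(p_{-1}^+(0,\cdot)-p_1^-(0,\cdot)\bigr).
\]
Applying $\cU^0(\xi)$, using its Fourier-mode preservation, and setting $a_{\pm 1}(\xi):=\sh(\kappa)\,\mu_{\pm 1}(\xi)$ yields exactly $(q_1^0,q_2^0)(\xi,\cdot)=(p_1^-,p_{-1}^+)(\xi,\cdot)\,T_1$ in the stated form; smoothness of $a_{\pm 1}$ is inherited from Kato's theory. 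Because the Fourier decomposition reduces the problem to one- and two-dimensional pieces, no serious obstacle appears---the conceptual step is the mode-by-mode commutativity of $\cU^0(\xi)$, and everything else is routine linear algebra at $\xi=0$.
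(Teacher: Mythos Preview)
Your proof is correct and follows essentially the same route as the paper: both exploit that $L_\xi^0$ has constant coefficients so that $\cU^0(\xi)$ acts mode-by-mode, reduce to the identity on the constant mode, and express $q_1^0(0,\cdot),q_2^0(0,\cdot)$ as combinations of $p_{\pm1}^\mp(0,\cdot)$ via $\omega_{\pm1}(0)=1/\tah(\kappa)$. The only difference is that the paper invokes its Lemma~\ref{lem-exp-extopxi} to compute $\cU^0(\xi)$ explicitly on each mode (thereby obtaining the concrete value $a_{\pm1}(\xi)=\sh(\kappa)\sqrt{\omega_{\pm1}(0)/\omega_{\pm1}(\xi)}$), whereas you argue abstractly that on mode $\pm1$ the Kato operator maps the one-dimensional spectral line $\mathbb{C}\,p_{\pm1}^\mp(0,\cdot)$ to $\mathbb{C}\,p_{\pm1}^\mp(\xi,\cdot)$ and hence acts by an unspecified smooth scalar---which is all the statement requires.
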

\begin{proof}
To start, we first compute the expressions of $q_j^0(\xi, \cdot)=\cU^0(\xi)q_j^0(0, \cdot)$. 
By Lemma \ref{lem-exp-extopxi} in Appendix \ref{appen-extension}, we see that $\cU^0(\xi)$ does not shift the eigenmodes and reduces to the identity when applied to the constant vectors
  \begin{align*}
    q_3^0(\xi,x)= \bpm 0 \\1 \epm ,   \quad 
     q_4^0(\xi,\cdot)= \bpm 1/\alpha \\0 \epm .
    \end{align*}
The form of $T_0$ then follows. 

    Define 
    \beqs 
\tilde{p}_{\pm 1}(\xi,\cdot)= \cU^0(\xi) \left( e^{\pm\i x} \bpm \pm \i\sh(\kpa)\\ \ch(\kpa) \epm \right).
    \eeqs
Applying \eqref{exp-extopxi}, we readily compute that 
\begin{align*}
   \tilde{p}_{\pm1}(\xi,x)=e^{\pm \i x}
   \sh(\kpa) \sqrt{\f{\omega_{\pm1}(0)}{\omega_{\pm1}(\xi)}}  \bpm \pm \i \\ \f{\omega_{\pm1}(\xi
   )}{\omega_{\pm1}(0)\,\tah(\kpa)}\epm .
\end{align*}
It follows from the dispersion relation $\alpha+\beta\kpa^2=\f{\kpa}{\tah (\kpa)}$ that 
$\omega_{\pm 1}(0)=\f{1}{\tah(\kpa)}$. Therefore,
we derive that 
\begin{align*}
    \tilde{p}_{\pm 1}(\xi,\cdot)=  \sh(\kpa) \sqrt{\f{\omega_{\pm1}(0)}{\omega_{\pm1}(\xi)}} \,
    p_{\pm 1} ^{\mp}(\xi,\cdot)\,.
\end{align*}
The form of $T_1$ then follows from the fact that
\beqs
 (q_1^0,q_2^0)(\xi,\cdot)=(\tilde{p}_{ 1},\tilde{p}_{ -1})(\xi,\cdot)\frac12 \bpm 1 & -\i\\ 1& \i \epm .
\eeqs
\end{proof}
\begin{cor}\label{cor-Dxi0}
When $\vep=0$, the representation matrix reads
 \begin{align*}
\rD_{\xi}^0=\big(d_{j\ell}(0,\xi)\big)_{1\leq j,\ell\leq 4}   
\end{align*}
where $d_{j\ell}(0,\xi)=0$ for 
 $(j,\ell)\in (\{1,2\}, \{3,4\})$ or $( \{3,4\}, \{1,2\})$
and
\beq\label{entry-xi=0}
\begin{aligned}
  &  d_{11}(0,\xi)=d_{22}(0,\xi)=\f{\i}{2} \bigg(2\kpa\xi+w_1(-\xi)-w_1(\xi)\bigg)=\i \kpa\xi\big(1-w_1'(0)/\kpa\big)+\cO(\xi^2),\\
    & d_{12}(0,\xi)= -d_{21}(0,\xi)=\f12\big(2\kappa-w_1(\xi)-w_{1}(-\xi)\big)=
     - \f{1}{2}w_1''(0)\,\xi^2+\cO(\xi^4), \\
 &   d_{33}(0,\xi)=d_{44}(0,\xi)=\i \kpa \xi,  \quad  d_{34}(0,\xi)=-(1+\beta\kpa^2\xi^2/\alpha), \quad d_{43}(0,\xi)=\alpha \kpa \xi \tah(\kpa\xi).
\end{aligned}
\eeq
Here $w_1(\xi)\coloneqq\sqrt{\kpa(1+\xi)\tah(\kpa(1+\xi))(\alpha+\beta\kpa^2(1+\xi)^2)}$ and when $\alpha\in(0,1], \beta>0$, the relevant coefficients 
\beq\label{impor-coeff}
e_{*}\coloneqq\kpa^{-1}w_1'(0)>1, \qquad w_1''(0)>0. 
\eeq
\end{cor}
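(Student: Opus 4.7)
The plan is to prove this corollary in three stages: establish the block-diagonal structure of $\rD_\xi^0$, compute each $2\times 2$ block via the Fourier diagonalization provided by Lemma \ref{transp-basis}, and extract the Taylor expansions and the positivity statements in \eqref{impor-coeff} from properties of $\sigma_\pm$ already contained in Lemma \ref{lem-crossing}. For the block structure, I would first observe that $L_{\xi}^0$ leaves the subspaces $V_1 = \mathrm{span}\{q_1^0,q_2^0\}(\xi,\cdot) = \mathrm{span}\{p_1^-, p_{-1}^+\}(\xi,\cdot)$ and $V_0 = \mathrm{span}\{q_3^0,q_4^0\}(\xi,\cdot) = \mathrm{span}\{p_0^-, p_0^+\}(\xi,\cdot)$ invariant, where the span identifications come from Lemma \ref{transp-basis}. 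Next, the expansions \eqref{expan-aij} give $a_{12}(0) = a_{22}(0) = a_{31}(0) = a_{41}(0) = 0$, so at $\vep=0$ the dual basis respects this splitting: $\tilde q_1^0, \tilde q_2^0 \in \mathrm{span}\{Jq_1^0, Jq_2^0\}$ while $\tilde q_3^0, \tilde q_4^0 \in \mathrm{span}\{Jq_3^0, Jq_4^0\}$. Combined with the identities $\langle Jq_a^0(\xi,\cdot), q_b^0(\xi,\cdot)\rangle = 0$ for every $(a,b) \in \{1,2\}\times\{3,4\}$ (verified at $\xi=0$ by Corollary \ref{cor-exp-0} and propagated to $\xi\neq0$ by the symplectic-preservation property \eqref{preseve-syminner} underlying \eqref{orthogonality}), this forces $d_{j\ell}(0,\xi) = 0$ for all $(j,\ell)$ crossing the two blocks.

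For the second stage, I would compute the upper-left block from the identity $L_{\xi}^0(p_1^-, p_{-1}^+) = (p_1^-, p_{-1}^+)\,\mathrm{diag}(\lambda_1^-(\xi), \lambda_{-1}^+(\xi))$ together with $(q_1^0,q_2^0) = (p_1^-, p_{-1}^+) T_1$: the $(1,2)$-block of $\rD_\xi^0$ equals $T_1^{-1}\mathrm{diag}(\lambda_1^-, \lambda_{-1}^+)T_1$, and the outer diagonal factor $\mathrm{diag}(a_1,a_{-1})$ in $T_1$ commutes with the middle diagonal factor and cancels, so the computation reduces to conjugation by $\tfrac12\bpm 1 & -\i \\ 1 & \i\epm$ applied to $\mathrm{diag}(\lambda_1^-, \lambda_{-1}^+)$. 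Using the identity $w_{-1}(\xi) = w_1(-\xi)$ (immediate from the evenness of $k\tanh k$) together with $\lambda_1^\pm(\xi) = \i(\kpa(1+\xi) \mp w_1(\xi))$ then produces the advertised formulas for $d_{11}, d_{12}, d_{21}, d_{22}$. The same argument applied via $T_0$ yields the lower-right block; here the one delicate point is that the individual entries of $T_0^{\pm1}$ blow up like $(\kpa\xi\tah(\kpa\xi))^{-1/2}$ at $\xi=0$, but this is precisely compensated by the corresponding square-root factor in $\lambda_0^+ - \lambda_0^- = 2\i w_0(\xi) = 2\i \sqrt{(\alpha+\beta\kpa^2\xi^2)\kpa\xi\tah(\kpa\xi)}$, so that the resulting block is manifestly smooth in $\xi$, with entries $\i\kpa\xi$ on the diagonal, $-(1+\beta\kpa^2\xi^2/\alpha)$ in the $(3,4)$ slot, and $\alpha\kpa\xi\tah(\kpa\xi)$ in the $(4,3)$ slot.

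For the third stage, the dispersion relation $\alpha+\beta\kpa^2 = \kpa/\tah(\kpa)$ forces $w_1(0) = \kpa$; splitting $w_1(\xi)\pm w_1(-\xi)$ into odd and even parts then immediately yields the claimed Taylor expansions of $d_{11}(0,\xi)$ and $d_{12}(0,\xi)$. For the positivity assertions \eqref{impor-coeff}, I would write $w_1(\xi) = g(\kpa(1+\xi))$ with $g(k) = \sqrt{k\tah(k)(\alpha+\beta k^2)}$, so that $e_* = w_1'(0)/\kpa = g'(\kpa)$ and $w_1''(0) = \kpa^2 g''(\kpa)$. The required inequalities $g'(\kpa) > 1$ and $g''(\kpa) > 0$ are exactly $\sigma_-'(\kpa) < 0$ and $\sigma_-''(\kpa) < 0$, both already established in the proof of Lemma \ref{lem-crossing} for $(\alpha,\beta) \in \rm I\cup III$. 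The explicit formula $e_* = \tfrac12(1 + 2\kpa/\sh(2\kpa)) + \beta\kpa\tah(\kpa)$ then follows from a direct differentiation of $g$ at $\kpa$ combined with one more use of the dispersion relation. The only step I anticipate requiring more than routine care is the verification that the apparent $\xi\to 0$ singularities in $T_0^{\pm1}$ genuinely cancel; everything else amounts to bookkeeping with Taylor series and the sign information already available.
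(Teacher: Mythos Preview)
Your proposal is correct and follows essentially the same route as the paper's own proof: Lemma \ref{transp-basis} is used to conjugate the diagonal action of $L_\xi^0$ on the Fourier eigenbasis, yielding the two blocks $T_1^{-1}\mathrm{diag}(\lambda_1^-,\lambda_{-1}^+)T_1$ and $T_0^{-1}\mathrm{diag}(\lambda_0^-,\lambda_0^+)T_0$, and the positivity in \eqref{impor-coeff} is deduced from the sign properties $\sigma_-'(\kpa)<0$, $\sigma_-''(\kpa)<0$ established in Lemma \ref{lem-crossing}. Your Stage~1 argument about the dual basis is more than is needed: once $L_\xi^0$ leaves $V_0$ and $V_1$ invariant, the block-diagonal form of $\rD_\xi^0$ is automatic from the defining relation $L_\xi^0(q_1,\dots,q_4)=(q_1,\dots,q_4)\rD_\xi^0$, independent of how the dual basis is constructed.
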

\begin{proof}
It follows from Lemma \ref{transp-basis} and 
\begin{align*}
 \rD_{\xi}^0= \bpm T_{1}^{-1} & 0 \\
    0& T_0^{-1}\epm  \mathrm{diag} \big(\lambda_{1}^{-}, \lambda_{-1}^{+}, \lambda_{0}^{-}, \lambda_{0}^{+}\big) \bpm T_{1} & 0 \\
    0& T_0\epm
\end{align*}
that $\rD_{\xi}^0=\bpm \rA_{\xi}^0 & 0\\ 0 & \rF_{\xi}^0 \epm, $ with 
\begin{align*}
    \rA_{\xi}^0&=\f12 \bpm \lambda_1^{-}+ \lambda_{-1}^{+}& \i\,\big(\lambda_{-1}^{+}-\lambda_{1}^{-}\big) \\
 \i\,\big(\lambda_{1}^{-}-\lambda_{-1}^{+}\big) & \lambda_1^{-}+ \lambda_{-1}^{+} \\   \epm\\
 &= \bpm \f{\i}{2} \big(2\kpa\xi+w_1(-\xi)-w_1(\xi)\big) & \f{1}{2}\big(2\kappa- w_1(\xi)-w_{1}(-\xi)\big) \\
 -\f{1}{2}\big(2\kappa- w_1(\xi)-w_{1}(-\xi)\big) & \f{\i}{2} \big(2\kpa\xi+w_1(-\xi)-w_1(\xi)\big) \\   \epm, 
\end{align*}
\begin{align*}
    \rF_{\xi}^0&=\f12 \bpm \lambda_0^{-}+ \lambda_{0}^{+} & -\i\sqrt{\frac{\alpha+\beta\kpa^2\xi^2}{\kpa\xi\tah(\kpa\xi)}}\big(\lambda_0^{-}-\lambda_{0}^{+}\big)/\alpha\\
 \i \alpha \sqrt{{\frac{\kpa\xi\tah(\kpa\xi)}{\alpha+\beta\kpa^2\xi^2} }} \big(\lambda_0^{-}-\lambda_{0}^{+}\big)& \lambda_0^{-}+ \lambda_{0}^{+}\epm \\
 &= \bpm \i\kpa\, \xi & -\big(1+\beta\kpa^2\xi^2/\alpha\big) \\
 \alpha \kpa \,\xi \tah(\kpa \xi)
 & \i\kpa\, \xi \epm. 
\end{align*}
From these identities we find \eqref{entry-xi=0}. 

Finally, the properties in \eqref{impor-coeff}
follow by noting that $w_1(\xi)=\kappa(1+\xi)-\sigma_{-}(\kappa(1+\xi))$, with $\sigma_{-}$ as in the proof of Lemma \ref{lem-crossing}. Indeed, this implies that $\kappa^{-1} w_1'(0)=1-\sigma_{-}'(\kappa)>1$ and $w_1''(0)=-\kappa^2 \sigma_{-}''(\kappa)>0$.
\end{proof}

\subsubsection{Some useful properties of the entries}
We now aim to gather some useful properties of the entries, from the symmetries of the system and the parity of the basis.  

\begin{lem}\label{lem-entry-impor}
For $|\xi|\leq \xi_0,\, |\vep|\leq \tilde{\vep}_2,$
let \begin{align*}
\rD_{\xi}^{\vep}=\big(d_{j\ell}(\vep,\xi)\big)_{1\leq j,\ell\leq 4}  \, .
\end{align*}
It holds that 
\beq\label{propofentry}
\begin{aligned}
&d_{j\ell}\in \i\,\mR && {\textrm{if } j+\ell \textrm{ is even}},\\
&d_{j\ell}\in \mR && { \textrm{if } j+\ell \textrm{ is odd}}.
\end{aligned}
\eeq
In particular, it holds that $ \mathrm{ for }\,\, j,\ell\in \{1,3\}$ or $j,\ell\in \{2,4\} ,$
\begin{align}\label{entryimaginary}
    d_{j\ell}(\vep,\xi)=\i \,\kpa\, \xi \, m_{j\ell}(\vep,\xi) 
\end{align}
and for $(j,\ell)\in( \{2,4\}, \{1,3\})$, 
\begin{align}\label{entryxi2}
d_{j\ell}=-\kpa^2\xi^2 m_{j\ell}(\vep,\xi).
\end{align}
Here $m_{j\ell}(\vep,\xi)\in \mR$ are smooth with
\beq \label{m-off-diag}
m_{j\ell}(\vep,\xi)=\cO(\vep),  \qquad  (j,\ell)\in (\{1,2\}, \{3,4\} ) \, \mathrm{ or } \, ( \{3,4\}, \{1,2\}).
\eeq 
\end{lem}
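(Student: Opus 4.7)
The plan is to exploit two symmetries of the linearized problem: a reversibility symmetry, encoded in an antilinear involution $\tilde R$, which delivers the reality/imaginarity pattern in \eqref{propofentry}; and a complex-conjugation symmetry relating $\xi$ to $-\xi$, which, combined with a direct evaluation at $\xi=0$ via Corollary \ref{corD0ep}, yields the factorizations \eqref{entryimaginary}--\eqref{entryxi2}. The $\cO(\vep)$ bound \eqref{m-off-diag} will then follow from a Fourier-mode separation at $\vep=0$.

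First, I would introduce the antilinear involution $\tilde R$ on $Y_{\per}$ defined by $(\tilde R u)(x)=\sigma\,\overline{u(-x)}$ with $\sigma=\diag(1,-1)$, and verify the three algebraic identities $\tilde R^2=\Id$, $\tilde R J=-J\tilde R$, and $\tilde R L_{\xi}^{\vep}=-L_{\xi}^{\vep}\tilde R$. The last identity is a consequence of the parity of the coefficients ($\zeta_{\vep}$ even and $\vp_{\vep}$ odd in $x$), together with $\tilde R(\partial_x+i\xi)=-(\partial_x+i\xi)\tilde R$ and the commutation of the scalar operators $G_{\xi}[\zeta_{\vep}]$ and $P_{\xi}[\zeta_{\vep}]$ with the spatial-reflection-plus-conjugation counterpart of $\tilde R$. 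A direct inspection of the expansions in Corollary \ref{cor-exp-0} and \eqref{exp-q40} gives $\tilde R q_j^{\vep}(0,\cdot)=\epsilon_j q_j^{\vep}(0,\cdot)$ with $\epsilon_1=\epsilon_3=-1$, $\epsilon_2=\epsilon_4=+1$. Since the ball $B_{c_0}(0)$ is invariant under $\lambda\mapsto-\bar\lambda$, the anticommutation $\tilde R L_{\xi}^{\vep}=-L_{\xi}^{\vep}\tilde R$ forces $\tilde R\Pi_{\xi}^{\vep}=\Pi_{\xi}^{\vep}\tilde R$; a uniqueness argument applied to \eqref{def-extop-xi} then yields $\tilde R\cU^{\vep}(\xi)=\cU^{\vep}(\xi)\tilde R$, hence $\tilde R q_j^{\vep}(\xi,\cdot)=\epsilon_j q_j^{\vep}(\xi,\cdot)$ for all small $\xi$. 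Reality of the coefficients $a_{jk}$ in \eqref{expan-aij} together with $\tilde R J=-J\tilde R$ then forces $\tilde R\tilde q_j^{\vep}(\xi,\cdot)=\epsilon_j\tilde q_j^{\vep}(\xi,\cdot)$. A change of variable in the inner product produces the antilinear identity $\langle u,v\rangle=\overline{\langle\tilde R u,\tilde R v\rangle}$; applying it with $u=\tilde q_j^{\vep}(\xi,\cdot)$ and $v=L_{\xi}^{\vep}q_{\ell}^{\vep}(\xi,\cdot)$, and using $\tilde R v=-\epsilon_{\ell}v$, gives $d_{j\ell}=-\epsilon_j\epsilon_{\ell}\,\overline{d_{j\ell}}$, which is precisely \eqref{propofentry} since $\epsilon_j\epsilon_{\ell}=+1$ exactly when $j+\ell$ is even.

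For the $\xi$-factor structure I would use that $L_0^{\vep}$ has real coefficients so that $\cC L_{\xi}^{\vep}=L_{-\xi}^{\vep}\cC$, where $\cC$ denotes complex conjugation. A contour-integral computation analogous to the one for $\tilde R$ shows $\cC\Pi_{\xi}^{\vep}=\Pi_{-\xi}^{\vep}\cC$, and then \eqref{def-extop-xi} yields $\cC\cU^{\vep}(\xi)=\cU^{\vep}(-\xi)\cC$; since $q_j^{\vep}(0,\cdot)$ is real, this gives $q_j^{\vep}(-\xi,\cdot)=\overline{q_j^{\vep}(\xi,\cdot)}$, and similarly $\tilde q_j^{\vep}(-\xi,\cdot)=\overline{\tilde q_j^{\vep}(\xi,\cdot)}$. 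Therefore $d_{j\ell}(\vep,-\xi)=\overline{d_{j\ell}(\vep,\xi)}$, so combined with \eqref{propofentry} the imaginary entries are odd in $\xi$ and the real entries are even in $\xi$. Using Corollary \ref{corD0ep} together with the biorthogonality $\langle\tilde q_j^{\vep}(0,\cdot),q_{\ell}^{\vep}(0,\cdot)\rangle=\delta_{j\ell}$, a direct evaluation of $d_{j\ell}(\vep,0)=\langle\tilde q_j^{\vep}(0,\cdot),L_0^{\vep}q_{\ell}^{\vep}(0,\cdot)\rangle$ shows that it vanishes for every pair except $(j,\ell)=(1,2)$ and $(j,\ell)=(3,4)$. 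In particular all imaginary entries, and all real entries with $(j,\ell)\in(\{2,4\},\{1,3\})$, vanish at $\xi=0$; combined with the odd/even parity in $\xi$, oddness of the imaginary entries yields the factor $\i\kpa\xi$ of \eqref{entryimaginary}, and evenness together with the vanishing at $\xi=0$ of the relevant real entries yields the factor $-\kpa^2\xi^2$ of \eqref{entryxi2}, with real smooth $m_{j\ell}$.

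Finally, the $\cO(\vep)$ bound \eqref{m-off-diag} is obtained by a Fourier-mode count at $\vep=0$: Lemma \ref{transp-basis} shows that $q_1^0(\xi,\cdot),q_2^0(\xi,\cdot)$ are supported on the Fourier modes $\{e^{\pm ix}\}$ while $q_3^0(\xi,\cdot),q_4^0(\xi,\cdot)$ are supported on the constant mode, and the vanishing of the appropriate entries in \eqref{expan-aij} at $\vep=0$ implies the same separation for the dual basis. Since $L_{\xi}^0$ has constant coefficients and therefore preserves every Fourier mode, $d_{j\ell}(0,\xi)=0$ for every cross-block pair $(j,\ell)\in(\{1,2\},\{3,4\})\cup(\{3,4\},\{1,2\})$, so dividing out the $\xi$-factor forces $m_{j\ell}(0,\xi)\equiv 0$, and smoothness in $\vep$ then yields $m_{j\ell}(\vep,\xi)=\cO(\vep)$. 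The main technical point requiring care is the passage of the antilinear operators $\tilde R$ and $\cC$ through the contour-integral definition of $\Pi_{\xi}^{\vep}$: the orientation-reversing substitutions $\lambda\mapsto-\bar\lambda$ and $\lambda\mapsto\bar\lambda$ must be tracked carefully to confirm the intertwinings $\tilde R\Pi_{\xi}^{\vep}=\Pi_{\xi}^{\vep}\tilde R$ and $\cC\Pi_{\xi}^{\vep}=\Pi_{-\xi}^{\vep}\cC$ on which everything else rests.
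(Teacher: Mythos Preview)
Your proof is correct. The treatment of \eqref{propofentry} via the antilinear reversibility operator $\tilde R$ is identical to the paper's (the paper calls this operator $\mathcal{I}$ and records the commutation $\mathcal{U}^{\vep}(\xi)\mathcal{I}=\mathcal{I}\,\mathcal{U}^{\vep}(\xi)$ in Proposition~\ref{prop-extop}), and your derivation of \eqref{m-off-diag} via Fourier-mode separation at $\vep=0$ coincides with the paper's appeal to Corollary~\ref{cor-Dxi0}.

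The genuine difference is in how you obtain the $\xi$-factorizations \eqref{entryimaginary}--\eqref{entryxi2}. The paper argues directly: for \eqref{entryimaginary} it uses the kernel relations $L_0^{\vep}q_1^{\vep}(0,\cdot)=L_0^{\vep}q_3^{\vep}(0,\cdot)=0$ (and their adjoint counterparts, via the symmetry of $A_{0}^{\vep}$) to force vanishing at $\xi=0$; for \eqref{entryxi2} it explicitly computes the first $\xi$-derivative of the building blocks $\langle Jq_m^{\vep},L_\xi^{\vep}q_\ell^{\vep}\rangle$ at $\xi=0$ for $m,\ell\in\{1,3\}$ and kills it by a spatial parity argument on $L_1^{\vep}q_\ell^{\vep}$ and $Jq_m^{\vep}$. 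You replace this with the single conjugation symmetry $\mathcal{C}L_\xi^{\vep}=L_{-\xi}^{\vep}\mathcal{C}$, which propagates through $\Pi_\xi^{\vep}$ and $\mathcal{U}^{\vep}(\xi)$ to give $d_{j\ell}(\vep,-\xi)=\overline{d_{j\ell}(\vep,\xi)}$; combined with \eqref{propofentry} this makes the imaginary entries odd and the real entries even in $\xi$, after which the vanishing at $\xi=0$ from Corollary~\ref{corD0ep} is all that is needed. Your route is more systematic and avoids the ad hoc computation of $\partial_\xi d_{j\ell}|_{\xi=0}$; the paper's route, on the other hand, is slightly more self-contained in that it does not require tracking a second antilinear symmetry through the Kato machinery (you correctly flag the contour-integral verification of $\mathcal{C}\Pi_\xi^{\vep}=\Pi_{-\xi}^{\vep}\mathcal{C}$ as the one point needing care). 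Both arguments are short and either would be acceptable.
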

\begin{proof}
    By  construction (see Proposition \ref{prop-kernel}), 
    \begin{align*}
         q_1^{\vep}(0,\cdot), \,  q_3^{\vep}(0,\cdot) =
        \bpm \mathrm{ odd } \\ \mathrm{ even }\epm \in \mR^2 , \quad \,  q_2^{\vep}(0,\cdot),\, q_4^{\vep}(0,\cdot) =\bpm \mathrm{ even } \\ \mathrm{ odd }\epm \in \mR^2
    \end{align*}
    Therefore,
    \begin{align*}
       \cI q_j^{\vep}(0,\cdot)=(-1)^j\, q_j^{\vep}(0,\cdot)\, ,
    \end{align*}
where the reversibility operator $\cI$ is defined in \eqref{reversibilityop}. Thanks to \eqref{reversibilitycom}, this property is preserved under the action of the extension operator so that  
\begin{align*}
       \cI q_j^{\vep}(\xi,\cdot)=(-1)^j q_j^{\vep}(\xi,\cdot), \qquad \forall \, |\xi|\leq \ep_0\,.
    \end{align*}
The other piece of information is that 
\beqs 
A_{\xi}^{\vep} \cI=\cI A_{\xi}^{\vep}
\eeqs
which is readily checked from the parity of the background waves. 
Thanks to the above two identities, we compute 
\beq\label{identity-entries}
\begin{aligned}
    \langle Jq_j^{\vep}(\xi,\cdot), L_{\xi}^{\vep} q_{\ell}^{\vep}(\xi,\cdot) \rangle= \langle q_j^{\vep}(\xi,\cdot), A_{\xi}^{\vep} q_{\ell}^{\vep}(\xi,\cdot) \rangle
    &=(-1)^{j} \langle \cI q_j^{\vep}(\xi,\cdot), A_{\xi}^{\vep} q_{\ell}^{\vep}(\xi,\cdot) \rangle\\
    &=(-1)^{j} \, \overline{\langle  q_j^{\vep}(\xi,\cdot), \cI A_{\xi}^{\vep} q_{\ell}^{\vep}(\xi,\cdot)  \rangle}\\
    &= (-1)^{j+\ell} \, \overline{\langle  q_j^{\vep}(\xi,\cdot),  A_{\xi}^{\vep} q_{\ell}^{\vep}(\xi,\cdot)  \rangle}.
\end{aligned}
\eeq
Consequently, 
\beqs 
\langle Jq_j^{\vep}(\xi,\cdot), L_{\xi}^{\vep} q_{\ell}^{\vep}(\xi,\cdot) \rangle \in \mR \quad \mathrm{ for } \,\, j+\ell \,\,\mathrm{ even };\,\, \in \i\mR \,\,
  \mathrm{ for } \, j+\ell \, \,\mathrm{ odd }.
\eeqs
The property \eqref{propofentry} then follows from 
the identity $d_{j\ell}=\langle \tilde{q}_j^{\vep}(\xi,\cdot), L_{\xi}^{\vep} q_{\ell}^{\vep}(\xi,\cdot) \rangle$ and the definitions of $\tilde{q}_j^{\vep}$
in \eqref{def-dualbasis}.

   Next, since $L_0^{\vep}q_{1}^{\vep}(0,\cdot)=L_0^{\vep}q_{3}^{\vep}(0,\cdot)=0$, we deduce from \eqref{identity-entries} that 
\beqs 
  \langle Jq_j^{\vep}(0,\cdot), L_{0}^{\vep} q_{\ell}^{\vep}(0,\cdot) \rangle=0 \quad \mathrm{ if } \,\, j\, \mathrm{ or }\, \,\ell \in \{1,3\},
\eeqs
which, together with \eqref{propofentry}, leads to \eqref{entryimaginary}. 
Moreover, when $j\in \{1,3\}$ and $\ell\in \{1,3\}$, by using $L_0^{\vep}q_{\ell}^{\vep}(0,\cdot)=0$,
$(L_0^{\vep})^*Jq_{j}^{\vep}(0,\cdot)=0$,
it holds that
\begin{align*}
    \f{1}{\i \kpa}\f{\d}{\d \xi} \big\langle Jq_j^{\vep}(\xi,\cdot), L_{\xi}^{\vep} q_{\ell}^{\vep}(\xi,\cdot) \big\rangle|_{\xi=0}&= \big\langle Jq_j^{\vep}(0,\cdot), 
    L_{1}^{\vep} q_{\ell}^{\vep}(0,\cdot) \big\rangle,
\end{align*}
 where
\begin{align*}
 L_1^{\vep}\coloneqq\f{1}{\i\kpa}\p_{\xi}L_{\xi}^{\vep}\,\big|_{\xi=0}\,.
\end{align*}
Since $L_1^{\vep}q_{\ell}^{\vep}=(\mathrm{odd}, \mathrm{even})^t,\, Jq_j^{\vep}(0,\cdot)=(\mathrm{ even }, \mathrm{ odd })^t, $
we conclude that 
\begin{align*}
    \big\langle Jq_j^{\vep}(\xi,\cdot), L_{\xi}^{\vep} q_{\ell}^{\vep}(\xi,\cdot) \big\rangle=\cO(\kpa^2\xi^2), \qquad j,\ell\in \{1,3\},
\end{align*}
which leads to \eqref{entryxi2}.  Finally,  by Corollary \ref{cor-Dxi0}, it holds that 
\begin{align*}
    d_{j\ell}(\vep,\xi)=\cO(\vep), \quad \forall\, (j,\ell)\in (\{1,3\}, \{2,4\} ) \, \mathrm{ or } \, ( \{2,4\}, \{1,3\}).
\end{align*}
This leads to \eqref{m-off-diag}. 
\end{proof}

The following lemma will be useful later. 
\begin{lem}\label{lemd1423}
    It holds that 
\begin{align*}
    d_{14}(\vep,\xi)=\cO(\vep\kpa^2\xi^2), \quad    d_{32}(\vep,\xi)=\cO(\vep\kpa^2\xi^2).
 \end{align*}
\end{lem}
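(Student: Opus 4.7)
Both entries $d_{14}(\vep,\xi)$ and $d_{32}(\vep,\xi)$ are real by Lemma~\ref{lem-entry-impor}, since $1+4$ and $3+2$ are odd. They also vanish at $\xi=0$ by the block structure of $\rD_0^{\vep}$ in Corollary~\ref{corD0ep}, and they vanish identically in $\xi$ at $\vep=0$ by the block structure of $\rD_{\xi}^{0}$ in Corollary~\ref{cor-Dxi0}: the basis vectors $q_1^{0},q_2^{0}$ live in the Fourier modes $\pm 1$ while $q_3^{0},q_4^{0}$ live in mode $0$, so there is no overlap. These two vanishings together with smoothness alone would only yield $\cO(\vep\,\xi)$; upgrading to $\cO(\vep\,\kpa^2 \xi^2)$ requires additionally that both entries are \emph{even in $\xi$}.

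The plan for the evenness is to exploit the conjugation symmetry of $L_{\xi}^{\vep}$ in $\xi$. Let $\mathcal{C}$ denote pointwise complex conjugation acting on functions. Every coefficient of $L^{\vep}$ (namely $k_{\vep}$, $d_{\vep}$, $w_{\vep}$, $G[\zeta_{\vep}]$, $P[\zeta_{\vep}]$ and the matrix $J$) is real, and the only imaginary-valued dependence on $\xi$ sits in the combination $\p_x+i\xi$; hence $\mathcal{C}\, L_{\xi}^{\vep}\, \mathcal{C}^{-1}=L_{-\xi}^{\vep}$. Since the contour $\p B_{c_0}(0)$ is symmetric under $\lambda\mapsto\bar\lambda$, this relation transfers through the resolvent to the spectral projector, $\mathcal{C}\,\Pi_{\xi}^{\vep}\,\mathcal{C}^{-1}=\Pi_{-\xi}^{\vep}$, and then, by checking that $\mathcal{C}\,\cU^{\vep}(\xi)\,\mathcal{C}^{-1}$ and $\cU^{\vep}(-\xi)$ both satisfy the Kato ODE~\eqref{def-extop-xi} with identical initial condition $\Id$ at $\xi=0$, uniqueness gives $\mathcal{C}\,\cU^{\vep}(\xi)\,\mathcal{C}^{-1}=\cU^{\vep}(-\xi)$. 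Since each $q_j^{\vep}(0,\cdot)$ is real-valued (Proposition~\ref{prop-kernel} and Corollary~\ref{cor-exp-0}), and the dual-basis coefficients $a_{jk}(\vep)$ in~\eqref{def-dualbasis} together with $J$ are real, this yields
\begin{equation*}
\overline{q_j^{\vep}(\xi,\cdot)} = q_j^{\vep}(-\xi,\cdot), \qquad \overline{\tilde{q}_j^{\vep}(\xi,\cdot)} = \tilde{q}_j^{\vep}(-\xi,\cdot), \qquad j=1,2,3,4.
\end{equation*}

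Combining these facts with the conjugate-linearity of $\langle\cdot,\cdot\rangle$ in its first argument, for any $j,\ell$,
\begin{equation*}
d_{j\ell}(\vep,-\xi) = \langle \tilde{q}_j^{\vep}(-\xi,\cdot),\, L_{-\xi}^{\vep}\, q_\ell^{\vep}(-\xi,\cdot)\rangle = \langle \overline{\tilde{q}_j^{\vep}(\xi,\cdot)},\, \overline{L_{\xi}^{\vep}\, q_\ell^{\vep}(\xi,\cdot)}\rangle = \overline{d_{j\ell}(\vep,\xi)}.
\end{equation*}
For $(j,\ell)\in\{(1,4),(3,2)\}$ we have $d_{j\ell}\in\mR$, so this becomes $d_{j\ell}(\vep,-\xi)=d_{j\ell}(\vep,\xi)$, i.e.\ evenness in $\xi$. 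Evenness plus $d_{j\ell}(\vep,0)=0$ gives $d_{j\ell}(\vep,\xi)=\xi^{2}\, g_{j\ell}(\vep,\xi)$ with $g_{j\ell}$ smooth; the additional vanishing at $\vep=0$ forces $g_{j\ell}(0,\xi)\equiv 0$, hence $g_{j\ell}(\vep,\xi)=\vep\, h_{j\ell}(\vep,\xi)$. The factor $\kpa^2$ is then absorbed automatically because each $\p_\xi L_\xi^{\vep}$ contributes one factor of $\kpa$ through $k_\vep(\p_x+i\xi)$ with $k_\vep\to\kpa$.

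The only non-routine step is the conjugation rule for the Kato extension operator; its delicate point is that taking the complex conjugate inside the contour integral defining $\Pi_\xi^\vep$ reverses the orientation of $\p B_{c_0}(0)$, and this sign has to be tracked. The cleanest way is, as indicated, to compare the two Kato ODEs rather than manipulate the contour directly.
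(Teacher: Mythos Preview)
Your argument is correct and takes a genuinely different, more global route than the paper. Both proofs ultimately rest on the same conjugation symmetry, but they exploit it at different levels.

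The paper works infinitesimally: it computes $(\p_\xi d_{14})(\vep,0)$ via the product rule, obtaining a sum of terms each involving $L_1^{\vep}$ or $\cU_1^{\vep}\coloneqq \frac{1}{\i\kpa}\p_\xi \cU^{\vep}(\xi)\big|_{\xi=0}$. A separate lemma (Lemma~\ref{lem-U1-real}) then shows $\overline{\cU_1^{\vep}}=\cU_1^{\vep}$, so every term maps real vectors to real vectors; since $q_2^{\vep}(0,\cdot)$ and $q_4^{\vep}(0,\cdot)$ are real, this forces $(\p_\xi d_{14})(\vep,0)\in \i\mR$. Combined with $d_{14}\in\mR$ from \eqref{propofentry}, the derivative vanishes, and one concludes $d_{14}=\cO(\vep\xi^2)$ from the three vanishings $d_{14}(0,\xi)=d_{14}(\vep,0)=(\p_\xi d_{14})(\vep,0)=0$.

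You instead prove the identity $\mathcal{C}\,\cU^{\vep}(\xi)\,\mathcal{C}^{-1}=\cU^{\vep}(-\xi)$ for \emph{all} $\xi$, whence $d_{j\ell}(\vep,-\xi)=\overline{d_{j\ell}(\vep,\xi)}$ for every entry; reality of $d_{14}$ and $d_{32}$ then gives evenness directly. This is cleaner in that it avoids the term-by-term product-rule expansion, and your observation that the possible sign in $\mathcal{C}\Pi_\xi^\vep\mathcal{C}^{-1}=\pm\Pi_{-\xi}^\vep$ is irrelevant for the Kato ODE (since it enters only through the commutator $[\p_\xi\Pi,\Pi]$) is exactly the right way to sidestep the orientation bookkeeping. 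Differentiating your global identity at $\xi=0$ recovers precisely the paper's statement $\overline{\cU_1^{\vep}}=\cU_1^{\vep}$, so the paper's Lemma~\ref{lem-U1-real} is the first-order shadow of your argument. The trade-off is modest: the paper's local computation is slightly more self-contained, while your symmetry argument is more structural and yields the evenness of all real entries at once.
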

\begin{proof}
 By Corollaries \ref{corD0ep} and \ref{cor-Dxi0},
 we have
  \begin{align*}
     d_{14}(0,\xi) =  d_{14}(\vep,0)= d_{32}(0,\xi) =  d_{32}(\vep,0)=0.
  \end{align*}
  Consequently, it suffices to show that 
  \begin{align*}
  (\p_{\xi} d_{14}) (\vep,0)
  =(\p_{\xi} d_{32})(\vep,0)=0.
  \end{align*}
 We prove the first identity; the second follows by a similar argument.

Recall that
  \begin{align*}
    d_{14}(\vep,\xi)&=a_{11}(\vep)\,\langle J q_2^{\vep}(\xi,\cdot), L_{\xi}^{\vep}q_4^{\vep}(\xi,\cdot)  \rangle+a_{12}(\vep)\,\langle Jq_4^{\vep}(\xi,\cdot), L_{\xi}^{\vep}q_4^{\vep}(\xi,\cdot)  \rangle.
  \end{align*}
We compute 
\begin{align*}
   & \f{\d}{\d \xi}\big\langle J q_2^{\vep}(\xi,\cdot), L_{\xi}^{\vep}q_4^{\vep}(\xi,\cdot)  \big\rangle\big|_{\xi=0}\\
    &=\i\, \kpa \bigg(\big\langle J q_2^{\vep}(0,\cdot), L_{1}^{\vep}q_4^{\vep}(0,\cdot)  \big\rangle+ \big\langle J \cU_1^{\vep}q_2^{\vep}(0,\cdot), L_{0}^{\vep}q_4^{\vep}(0,\cdot)  \big\rangle+\big\langle J\cU_1^{\vep} q_2^{\vep}(0,\cdot), L_{0}^{\vep}q_4^{\vep}(0,\cdot)  \big\rangle\bigg),
\end{align*}
 where $\cU_1^{\vep}\coloneqq \f{1}{\i \kpa} \p_{\xi}\cU^{\vep}|_{\xi=0}$.
Since $q_2^{\vep}(0,\cdot), q_4^{\vep}(0,\cdot)$ are real valued and both $L_1^{\vep},\, \cU_1^{\vep}$ preserve real-valuedness { (see Lemma \ref{lem-U1-real})}, it follows that
\begin{align*}
    \f{\d}{\d \xi}\big\langle J q_2^{\vep}(\xi,\cdot), L_{\xi}^{\vep}q_4^{\vep}(\xi,\cdot)  \big\rangle\big|_{\xi=0}\in \i\,\mR.
\end{align*}
The same property holds for $ \f{\d}{\d \xi}\big\langle J q_4^{\vep}(\xi,\cdot), L_{\xi}^{\vep}q_4^{\vep}(\xi,\cdot)  \big\rangle\big|_{\xi=0}$. Therefore, 
since $a_{11}(\vep), a_{12}(\vep)\in \mR$, we 
conclude that $(\p_{\xi} d_{14})(\vep, 0)\in \i\,\mR$.
However, according to \eqref{propofentry}, we also have 
$(\p_{\xi} d_{14})(\vep, 0)\in \mR$, and thus it must be that $(\p_{\xi} d_{14})(\vep, 0)=0$.
\end{proof}

\subsubsection{Transformation of the representation matrix}

In light of the properties of the entries  $d_{j\ell}({\vep},\xi)$ established in Lemma \ref{lem-entry-impor}, we begin by applying a transformation which reduces the problem to the study of a real-valued matrix.
Define 
\begin{align}\label{def-newmatrix}
   \mathrm {M}_{\xi}^{\vep}\coloneqq \frac{1}{\i \,\kpa \, \xi}\rP_{\xi}  \rD_{\xi}^{\vep} \rP_{\xi}^{-1}=(m_{j\ell}(\vep,\xi))_{1\leq j, \ell\leq 4}, \qquad \mathrm{\,with\,}\,\, \rP_{\xi}=\diag\, (\i\,\kpa \xi, 1, \i\, \kpa\xi, 1).
\end{align}
It is easy to see that 
\beq \label{spectrumrelation}
\sigma(\rD_{\xi}^{\vep})=\i \, \kpa\xi \,\sigma(M_{\xi}^{\vep}).
\eeq
By Lemma \ref{lem-entry-impor}, it holds that all the new entries $m_{j\ell}(\vep,\xi)$ are real. 
We first collect  some properties for them in the next two lemmas.
\begin{lem}[Entries of the diagonal blocks]\label{lem-diagentry}$ $

\textnormal{(1).}
It holds that 
\begin{align}\label{diff-digentry}
 m_{11}-m_{22}= \frac{d_{11}-d_{22}}{\i\,\kpa\,\xi}=\cO(\xi^2+\vep^2), \qquad  m_{33}-m_{44}=\frac{d_{33}-d_{44}}{\i\,\kpa\,\xi}=\cO(\xi^2+\vep^2).
\end{align}

\textnormal{(2).}
There is $c_1, c_2\in\mR$, such that
\begin{align}\label{exp-d1133}
    m_{11}=
    1-e_{*}+c_1\vep+\cO(\vep^2+\xi^2),
    \quad m_{33}=
    1+c_2\vep+\cO(\vep^2+\xi^2).
\end{align}

\textnormal{(3).}
For the other entries in the diagonal blocks,
\beq\label{m12-21}
\begin{aligned}
& m_{12}=d_{12}=- \f12w_1''(0)\xi^2+ \f{2\kpa \,k_2}{c \p_c \kpa}\vep^2
+\cO(\xi^2(\xi^2+\vep)), \quad m_{34}=d_{34}=-1+\cO(\xi^2+\vep^2), \\
&m_{21}=-\frac{d_{21}}{\kpa^2\xi^2} =- \f{1}{2\kpa^2}w_1''(0)+\cO(\xi^2+|\vep|),
\qquad\qquad  m_{43}=-\frac{d_{43}}{\kpa^2\xi^2}=-\alpha+\cO(\xi^2+|\vep|).
\end{aligned}
\eeq
\end{lem}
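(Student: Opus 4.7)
The strategy is to read off all four claims from Corollary \ref{corD0ep} (the $\xi=0$ slice) and Corollary \ref{cor-Dxi0} (the $\vep=0$ slice), combined with the reality, parity and smoothness properties of the entries $d_{j\ell}$ established in Lemma \ref{lem-entry-impor}, and then to use the definition \eqref{def-newmatrix} of $\rM_\xi^\vep$ to translate these into the required expansions of the $m_{j\ell}$.

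For part (2), I would expand the explicit formula
$d_{11}(0,\xi)=\tfrac{\i}{2}(2\kpa\xi+w_1(-\xi)-w_1(\xi))=\i\kpa\xi(1-e_*)+\cO(\xi^3)$
from Corollary \ref{cor-Dxi0}, divide by $\i\kpa\xi$ to get $m_{11}(0,\xi)=1-e_*+\cO(\xi^2)$, and likewise $m_{33}(0,\xi)=1+\cO(\xi^2)$ from $d_{33}(0,\xi)=\i\kpa\xi$. Since $m_{jj}$ is smooth in $(\vep,\xi)$ and real-valued by Lemma \ref{lem-entry-impor}, Taylor expansion in $\vep$ yields the claimed form with $c_j\coloneqq \p_\vep m_{jj}(0,0)\in\mR$. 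For part (3), the four relations $m_{12}=d_{12}$, $m_{34}=d_{34}$, $m_{21}=-d_{21}/(\kpa^2\xi^2)$, $m_{43}=-d_{43}/(\kpa^2\xi^2)$ are immediate from $\rP_\xi=\diag(\i\kpa\xi,1,\i\kpa\xi,1)$. The $\vep=0$ slices are given by Corollary \ref{cor-Dxi0}; the $\xi=0$ slices come from Corollary \ref{corD0ep}, giving $d_{12}(\vep,0)=\vep k_\vep\,\p_\vep k_\vep/(c\,\p_c k_\vep)$ and $d_{21}(\vep,0)=d_{43}(\vep,0)=0$, $d_{34}(\vep,0)=-1$. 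Using that $k_\vep$ is even in $\vep$ (so $\p_\vep k_\vep=2k_2\vep+\cO(\vep^3)$ and $c\p_c k_\vep=c\p_c\kpa+\cO(\vep^2)$) one obtains $d_{12}(\vep,0)=\tfrac{2\kpa k_2}{c\p_c\kpa}\vep^2+\cO(\vep^4)$. Since Lemma \ref{lem-entry-impor} gives $d_{12}\in\mR$ and $\overline{L_\xi^\vep}=L_{-\xi}^\vep$ forces the even-in-$\xi$ parity $d_{12}(\vep,\xi)=d_{12}(\vep,-\xi)$, the two slices then fit together into the claimed joint expansion, and the remaining three entries follow in the same way.

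For part (1), the equality $d_{11}(0,\xi)=d_{22}(0,\xi)$ in Corollary \ref{cor-Dxi0} immediately gives $(m_{11}-m_{22})(0,\xi)\equiv 0$, hence $(m_{11}-m_{22})(\vep,\xi)=\vep\, \tilde f(\vep,\xi)$ for some smooth real-valued $\tilde f$, so the task reduces to showing $\tilde f(0,0)=0$. Equivalently, one has to check that $\p_\vep\p_\xi(d_{11}-d_{22})(0,0)=0$. I would compute both mixed derivatives by differentiating
$d_{jj}(\vep,\xi)=\langle \tilde q_j^\vep(\xi,\cdot), L_\xi^\vep q_j^\vep(\xi,\cdot)\rangle$
and using (i) the Kato ODE \eqref{def-extop-xi} to express $\p_\xi q_j^\vep|_{\xi=0}$ in terms of $[\p_\xi\Pi_0^\vep,\Pi_0^\vep]q_j^\vep(0,\cdot)$, (ii) $L_0^\vep q_1^\vep(0,\cdot)=0$ and $L_0^\vep q_2^\vep(0,\cdot)=\cO(\vep^2)\,q_1^\vep(0,\cdot)$ (the prefactor $\vep k_\vep\p_\vep k_\vep/(c\p_c k_\vep)=\cO(\vep^2)$ by the evenness of $k_\vep$), and (iii) the explicit leading-order basis in Corollary \ref{cor-exp-0}. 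Under the $\vep=0$ trigonometric basis, the operator $\p_\xi L_\xi^\vep|_{\xi=0}=\i\kpa L_1^\vep$ acts diagonally on Fourier modes, and $q_1^0(0,\cdot)$ and $q_2^0(0,\cdot)$ are related by a quarter-period shift $x\mapsto x-\pi/2$ that commutes with $L_0^0$ and $L_1^0$; this forces the matrix elements governing $\p_\xi d_{11}$ and $\p_\xi d_{22}$ to coincide at $\vep=0$, which is exactly the required vanishing of $\p_\vep\p_\xi(d_{11}-d_{22})(0,0)$. The argument for $m_{33}-m_{44}$ is the same with the pair $(q_3,q_4)$ in place of $(q_1,q_2)$, now using that $q_3^\vep(0,\cdot)$ and $q_4^\vep(0,\cdot)$ reduce at $\vep=0$ to constant vectors on which $L_\xi^0$ acts through the $2\times 2$ block $\rF_\xi^0$ of Corollary \ref{cor-Dxi0}.

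The main obstacle is part (1): unlike parts (2) and (3), the vanishing of the $\cO(\vep)$ term in $m_{11}-m_{22}$ is not visible from either of the two one-parameter slices and must be extracted from a genuinely mixed first-order calculation in $(\vep,\xi)$. The cleanest way I know is the symmetry argument sketched above, but it requires that the quarter-period translation—which is only a symmetry of the linearisation at $\vep=0$ and not of $L_\xi^\vep$ itself—be used precisely at the level where it still applies, namely in the matrix elements of $L_1^\vep$ between $q_1^0(0,\cdot)$ and $q_2^0(0,\cdot)$. If that symmetry argument proves too delicate, the fallback is a direct brute-force computation of $\p_\vep\p_\xi d_{jj}(0,0)$ using the Kato extension and the explicit expansions in Corollary \ref{cor-exp-0}, which, though tedious, gives the equality unambiguously.
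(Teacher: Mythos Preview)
Your treatment of parts (2) and (3) is sound and matches the paper's approach: both slices $\vep=0$ and $\xi=0$ are read off from Corollaries \ref{corD0ep} and \ref{cor-Dxi0}, and the smoothness plus reality from Lemma \ref{lem-entry-impor} fills in the rest. (Your additional parity claim $d_{12}(\vep,\xi)=d_{12}(\vep,-\xi)$ would require tracking how the Kato basis transforms under $\xi\mapsto -\xi$, which you have not done; but it is not actually needed for the stated remainder.)

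For part (1), however, there is a real gap. Your quarter-period shift is a symmetry of $L_0^0$ and $L_1^0$ only; it tells you that $\p_\xi d_{11}(0,0)=\p_\xi d_{22}(0,0)$, but this is already contained in $d_{11}(0,\xi)\equiv d_{22}(0,\xi)$. It says nothing about $\p_\vep\p_\xi(d_{11}-d_{22})(0,0)$, which involves the $\vep$-derivatives of the basis and of $L_1^\vep$, and the translation symmetry is broken once $\vep\neq 0$. So the sentence ``this forces \ldots which is exactly the required vanishing of $\p_\vep\p_\xi(d_{11}-d_{22})(0,0)$'' is a non sequitur.

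The paper avoids this mixed-derivative computation entirely by exploiting an \emph{exact} algebraic cancellation built into the dual basis. Writing out $d_{11}=a_{11}\langle q_2,A_\xi^\vep q_1\rangle+a_{12}\langle q_4,A_\xi^\vep q_1\rangle$ and $d_{22}=a_{21}\langle q_1,A_\xi^\vep q_2\rangle+a_{22}\langle q_3,A_\xi^\vep q_2\rangle$, one uses the self-adjointness of $A_\xi^\vep$ together with the parity identity \eqref{identity-entries} (so $\langle q_1,A q_2\rangle=-\langle q_2,A q_1\rangle$) and the \emph{exact} relation $a_{11}+a_{21}=0$ coming directly from \eqref{compaij}. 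This kills the leading terms for all $(\vep,\xi)$ at once, leaving only
\[
d_{11}-d_{22}=a_{12}\langle q_4,A_\xi^\vep q_1\rangle+a_{22}\langle q_2,A_\xi^\vep q_3\rangle,
\]
where $a_{12}=\cO(\vep^2)$ and $a_{22}=\cO(\vep)$. Since $A_0^\vep q_1=A_0^\vep q_3=0$, each bracket already carries a factor of $\xi$; for the second, a single Fourier-mode observation ($q_2^0$ lives on modes $\pm 1$, $q_3^0$ on mode $0$, and $A_1^0,\cU_1^0$ do not shift modes) gives an extra $\cO(\xi+\vep)$. This yields $\cO(\xi(\xi^2+\vep^2))$ with no mixed-derivative calculus at all. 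The identical identity $a_{32}+a_{42}=0$ handles $m_{33}-m_{44}$.
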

\begin{proof}
(1). By the definition of the dual basis in \eqref{def-dualbasis}, 
we have 
\begin{align*}
    d_{11}(\vep,\xi)&=a_{11}\,\langle q_2^{\vep}(\xi,\cdot), A_{\xi}^{\vep}q_1^{\vep}(\xi,\cdot)  \rangle+a_{12}\,\langle q_4^{\vep}(\xi,\cdot), A_{\xi}^{\vep}q_1^{\vep}(\xi,\cdot)  \rangle,\\
   d_{22}(\vep,\xi) &=a_{21}\,\langle q_1^{\vep}(\xi,\cdot), A_{\xi}^{\vep}q_2^{\vep}(\xi,\cdot)  \rangle+ a_{22}\,\langle q_3^{\vep}(\xi,\cdot), A_{\xi}^{\vep}q_2^{\vep}(\xi,\cdot)  \rangle.
\end{align*}
  In view of relations \eqref{identity-entries}
and \eqref{expan-aij}, it follows that
\begin{align*}
    d_{11}-d_{22}&=a_{12}\langle q_4^{\vep}(\xi,\cdot), A_{\xi}^{\vep}q_1^{\vep}(\xi,\cdot)  \rangle+a_{22}\langle q_2^{\vep}(\xi,\cdot), A_{\xi}^{\vep}q_3^{\vep}(\xi,\cdot)  \rangle . 
\end{align*}
The first identity in \eqref{diff-digentry} then stems from the facts that $$ A_{0}^{\vep}q_1^{\vep}(0,\cdot)=A_{0}^{\vep}q_3^{\vep}(0,\cdot)=0,\,\quad a_{12}=\cO(\vep^2), \quad  a_{22}=\cO(\vep)$$ 
together with
\begin{align*}
\f{1}{\i \kpa}\f{\d}{\d \xi} \big\langle q_2^{\vep}(\xi,\cdot), A_{\xi}^{\vep}q_3^{\vep}(\xi,\cdot) \big \rangle\big|_{\vep=\xi=0}=\big\langle q_2^0(0,\cdot), \big( A_0^0  \cU_1^{0}+A_1^0\big)q_3^0(0,\cdot)\big\rangle =0,
\end{align*}
whose vanishing follows from the fact that 
$ A_1^\vep\coloneqq\f{1}{\i\kpa}\p_{\xi}A_{\xi}^{\vep}\big|_{\xi=0}$
and $\cU_1^{\vep}\coloneqq\f{1}{\i\kpa}\p_{\xi}\cU^{\vep}(0)$ 
 do not shift the Fourier modes.

The second identity in \eqref{diff-digentry} can be shown in a similar manner, we omit the proof. 

(2).
The expansions in  \eqref{exp-d1133} follow from  direct computations and \eqref{entry-xi=0}.

(3). These identities can be derived from Corollaries \ref{corD0ep} and \ref{cor-Dxi0}, and the fact that the relevant coefficients are all real.
\end{proof}

\begin{lem}[Entries of the off-diagonal blocks]
All the entries of the off-diagonal $2\times 2 $ blocks are of order $\cO(\vep)$ and in particular
\begin{align}\label{diag-offdiag}
m_{14}(\vep,\xi),\, m_{32}(\vep,\xi)\, =\cO(\vep \kpa^2\xi^2), 
\end{align}
\beq \label{entry-important}
\begin{aligned}
m_{13}=-\kpa \big(1+\f{\kpa \, e_{*}}{\alpha\, \sh(2\kpa)} \big) \vep+\cO(\vep^2+\kpa^2\xi^2), \qquad m_{42}=\f{\alpha\,\sh(2\kpa)}{2} m_{13}+\cO(\vep^2).
\end{aligned}
\eeq
\begin{proof}
The fact that all entries of the off-diagonal $2 \times 2$ blocks are of order $\cO(\vep)$ follows directly from \eqref{m-off-diag}. 
By definition \eqref{def-newmatrix}, we have $(m_{14}, m_{32}) = (d_{14}, d_{32})$.
Therefore, \eqref{diag-offdiag} follows as a consequence of 
Lemma \ref{lemd1423}.
By \eqref{def-newmatrix} and the fact 
that $a_{41}=-a_{12}=\cO(\vep^2)$ (see \eqref{expan-aij}),
\begin{align*}
m_{42}=(\i\kpa\xi)^{-1}{d_{42}}&=(\i\kpa\xi)^{-1}\big\langle a_{42}(\vep) Jq_3^{\vep}(\xi,\cdot) +a_{41}(\vep) J q_1^{\vep}(\xi,\cdot), L_{\xi}^{\vep}q_2^{\vep}(\xi,\cdot) \big \rangle\\
&=-a_{42}\,\overline{(\i\kpa\xi)^{-1}\langle  J q_2^{\vep}(\xi,\cdot),  L_{\xi}^{\vep}q_3^{\vep}(\xi,\cdot) \rangle }+\cO(\vep^2)\\
&=-\f{a_{42}}{a_{11}} m_{13}+\cO(\vep^2)
\end{align*}
which, by \eqref{expan-aij},  leads to the second identity in \eqref{entry-important}. 
It now remains to compute $m_{13}$. Using the 
expansions \eqref{expan-aij}, the fact  $(L_0^{\vep})^{*}J q_2^{\vep}=\cO(\vep^2),\, L_0^{\vep}q_3^{\vep}(0,\vep)=0,$
we conclude that
\beq\label{comp-m13}
\begin{aligned}
    m_{13}&=(\i\kpa\xi)^{-1}{d_{13}}= \f{1}{\i \kpa} \f{\d}{\d{\xi}}\big\langle a_{11}(\vep) Jq_2^{\vep}(\xi,\cdot),  L_{\xi}^{\vep}q_3^{\vep}(\xi,\cdot) \big\rangle\big|_{\xi=0} +\cO(\kpa^2\xi^2+\vep^2)\\
    &= a_{11}(\vep) \big\langle  Jq_2^{\vep}(0,\cdot),  L_{1}^{\vep}q_3^{\vep}(0,\cdot) \big\rangle+\cO(\kpa^2\xi^2+\vep^2)\\
    &= \frac{-1}{\pi \sh(2\kpa)}\bigg( \langle Jq_2^0(0,\cdot), L_1^1 q_3^0(0,\cdot)+L_1^0q_3^1(0,\cdot)\rangle+\langle Jq_2^1(0,\cdot), L_1^0q_3^0(0,\cdot)\rangle\bigg)\vep+\cO(\kpa^2\xi^2+\vep^2)
\end{aligned}
\eeq
where 
\beqs 
q_j^{1}=\p_{\vep} q_j^{\vep}|_{\vep=0} ,  \qquad 
L_{\ell}^k=\p_{\vep}^k \big(\f{1}{\i\kpa}\p_{\xi}\big)^{\ell} L_{\xi}^{\vep}\big|_{\xi=\vep=0}. 
\eeqs
By the expansion of the Dirichlet--Neumann operator (see Proposition \ref{prop-DN}), we find
\begin{align*}
    L_1^0=\bpm 1 & G_1^0\\ 2\beta\kpa \p_x & 1 \epm, \quad L_1^1= \bpm -v^1 & G_1^1\\ 0 & -v^1 \epm 
\end{align*}
where $v^1=\kappa \p_{\vep} \big(\p_x\vp_{\vep}-Z_{\vep}\p_x\zeta_{\vep}\big)|_{\vep=0}=\kpa \ch(\kpa) \cos(x)$ and 
\begin{align*}
  G_1^0=-\i \tah(\kpa D)-\f{\kpa\p_x}{\ch^2(\kpa D)} , \qquad G_1^1=-\big(G_0^0\,\zeta^1G_1^0+G_1^0\,\zeta^1 G_0^0\big)-\kpa \big(\p_x(\zeta^1\cdot)+\zeta^1\p_x\big)
\end{align*}
with $G_0^0=G_0[0]=|\kpa D| \tah(|\kpa D|), $  and $\zeta^1=\p_{\vep} \zeta_{\vep}|_{\vep=0}=\sh(\kpa)\cos (x)$.  
Using the expansion of the basis $q_2^{\vep}(0,\cdot),q_3^{\vep}(0,\cdot)$ in Corollary \ref{cor-exp-0},
it then follows from 
direct computations that 
\begin{align*}
\langle Jq_2^0(0,\cdot), L_1^1 q_3^0(0,\cdot)\rangle& = \left\langle \bpm \ch(\kpa) \sin(x)\\-\sh(\kpa)\cos (x) \epm, \, \kpa \bpm \sh(\kpa)\sin(x) \\ -\ch(\kpa)\cos(x) \epm  \right\rangle \\
&=\pi \kpa \sh(2\kpa), \\
\langle Jq_2^0(0,\cdot), L_1^0 q_3^1(0,\cdot)\rangle& =  \left\langle  
\bpm \ch(\kpa) \sin(x)\\-\sh(\kpa)\cos (x) \epm, \,
\Lambda_0 
\bpm \f{\kpa}{\ch(\kpa)}\sin(x)\\
(-2\beta\kpa\sh(\kpa)+\ch(\kpa)) \cos (x) \epm \right\rangle \\
&=\pi \f{\kpa^2}{2\alpha}\bigg( \f{2\kpa}{\sh(2\kpa)} +2\beta\kpa\tah(\kpa)-1\bigg)\, ,\\
\langle Jq_2^1(0,\cdot), L_1^0q_3^0(0,\cdot)\rangle&=\left\langle  
\bpm * \\  \f{\kpa^2}{2\alpha}+ *\cos(x) \epm , 
\bpm 0 \\ 1\epm \right\rangle=\f{\pi\kpa^2}{\alpha},
\end{align*}
where we use $*$ to denote irrelevant functions or constants in the final computation.
Substituting the above three expressions into \eqref{comp-m13}, we find \eqref{entry-important}.
\end{proof}
    
\end{lem}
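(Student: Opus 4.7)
The goal is to compute the four entries $m_{14},m_{32},m_{13},m_{42}$ of $\rM_{\xi}^{\vep}$ that populate the off‑diagonal $2\times 2$ blocks, starting from the structural information already gathered.

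First, the blanket statement that every off‑diagonal entry is $\cO(\vep)$ is just a restatement of \eqref{m-off-diag} from Lemma \ref{lem-entry-impor}, since $\rP_{\xi}=\diag(\i\kpa\xi,1,\i\kpa\xi,1)$ only rescales entries by $1$ or by $\i\kpa\xi$, and the prefactor $(\i\kpa\xi)^{-1}$ cancels one factor of $\xi$ already present in the relevant $d_{j\ell}$. For the pair $m_{14}, m_{32}$ the observation to make is that $\rP_{\xi}$ multiplies these two entries exactly so as to produce $m_{14}=d_{14}$ and $m_{32}=d_{32}$, and then Lemma \ref{lemd1423} immediately gives the bound $\cO(\vep\kpa^2\xi^2)$.

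The computation of $m_{42}$ reduces to that of $m_{13}$. Indeed, I would expand the dual basis using \eqref{def-dualbasis}, recall from \eqref{expan-aij} that $a_{41}=-a_{12}=\cO(\vep^2)$, and use the Hamiltonian symmetry $A_{\xi}^{\vep}=(A_{\xi}^{\vep})^{*}$ to rewrite
\beqs
d_{42}=\langle a_{42} J q_3^{\vep}+a_{41}Jq_1^{\vep},\, L_{\xi}^{\vep}q_2^{\vep}\rangle
=-a_{42}\overline{\langle Jq_2^{\vep},L_{\xi}^{\vep}q_3^{\vep}\rangle}+\cO(\vep^2\kpa\xi),
\eeqs
so that dividing by $\i\kpa\xi$ and using $a_{42}/a_{11}$ from \eqref{expan-aij} yields the asserted ratio $m_{42}=\tfrac12\alpha\sh(2\kpa)\,m_{13}+\cO(\vep^2)$.

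The heart of the lemma is the computation of $m_{13}=d_{13}/(\i\kpa\xi)$. Since $d_{13}(\vep,0)=0$ by Corollary \ref{corD0ep} and $d_{13}(0,\xi)=0$ by Corollary \ref{cor-Dxi0}, it suffices to expand in $\xi$ to first order at $\xi=0$ and then in $\vep$ to first order at $\vep=0$. The key simplification is that at $\xi=0$ we have $L_0^{\vep}q_3^{\vep}(0,\cdot)=0$ and $(L_0^{\vep})^{*}Jq_2^{\vep}(0,\cdot)=\cO(\vep^2)$ (the latter follows from $L_0^{\vep}q_2^{\vep}(0,\cdot)=\vep k_\vep(\p_\vep k_\vep/c\p_c k_\vep)q_1^{\vep}$ and the Hamiltonian symmetry together with the orthogonality $\langle Jq_1^{\vep},q_2^{\vep}\rangle\in\mR$ and the expansion \eqref{expan-aij}). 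Differentiating $\xi\mapsto\langle Jq_2^{\vep}(\xi,\cdot), L_{\xi}^{\vep} q_3^{\vep}(\xi,\cdot)\rangle$ at $\xi=0$, the two terms in which $\p_\xi$ hits a basis vector vanish (because $L_0^{\vep}$ and its adjoint annihilate the base points up to $\cO(\vep^2)$), so only $\p_\xi L_{\xi}^{\vep}|_{\xi=0}=\i\kpa L_1^{\vep}$ contributes and
\beqs
m_{13}=a_{11}(\vep)\langle Jq_2^{\vep}(0,\cdot),L_1^{\vep}q_3^{\vep}(0,\cdot)\rangle+\cO(\vep^2+\kpa^2\xi^2).
\eeqs
Expanding this in $\vep$ reduces, after using $a_{11}(0)=-1/(\pi\sh(2\kpa))$ and the fact that the $\vep=0$ contribution vanishes, to three inner products $\langle Jq_2^0,L_1^1q_3^0\rangle$, $\langle Jq_2^0,L_1^0q_3^1\rangle$ and $\langle Jq_2^1,L_1^0q_3^0\rangle$, which I would evaluate by inserting the $\vep$‑expansions of the basis vectors from Corollary \ref{cor-exp-0} (noting in particular the coefficient $\Lambda_0=\kpa^2/(\alpha\sh(2\kpa))$ in $q_3^{\vep}$) together with the standard shape expansion of the Dirichlet--Neumann operator which gives the explicit form of $L_1^0$ and $L_1^1$.

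The main obstacle is exactly this last step: organizing the three inner‑product contributions so that the dispersion relation $\alpha+\beta\kpa^2=\kpa/\tah(\kpa)$ combines them into the compact form $-\kpa(1+\kpa e_{*}/(\alpha\sh(2\kpa)))$ asserted in \eqref{entry-important}. The computation is orthogonality‑driven (only the $\sin x,\cos x$ Fourier modes survive), but it requires using $e_{*}=\tfrac12(1+2\kpa/\sh(2\kpa))+\beta\kpa\tah(\kpa)$ to rewrite the three numerical outcomes in terms of $e_{*}$ and $\sh(2\kpa)$. Once the expression for $m_{13}$ is obtained, the analogous expression for $m_{42}$ follows from the ratio derived above, completing the lemma.
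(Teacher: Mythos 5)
Your proposal follows essentially the same route as the paper: identifying $m_{14}=d_{14}$, $m_{32}=d_{32}$ from the conjugation by $\rP_{\xi}$ and invoking Lemma \ref{lemd1423}; reducing $m_{42}$ to $m_{13}$ via the dual-basis structure, the bound $a_{41}=\cO(\vep^2)$, and the self-adjointness of $A_{\xi}^{\vep}$; and computing $m_{13}$ by differentiating $\langle Jq_2^{\vep}(\xi,\cdot),L_{\xi}^{\vep}q_3^{\vep}(\xi,\cdot)\rangle$ at $\xi=0$, using $L_0^{\vep}q_3^{\vep}(0,\cdot)=0$ and $(L_0^{\vep})^{*}Jq_2^{\vep}(0,\cdot)=\cO(\vep^2)$ to kill the terms where $\p_{\xi}$ falls on a basis vector. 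Your justification of $(L_0^{\vep})^{*}Jq_2^{\vep}(0,\cdot)=\cO(\vep^2)$ is sound (it really hinges on $\p_{\vep}k_{\vep}=\cO(\vep)$ via the Hamiltonian symmetry), and the reduction to the three inner products $\langle Jq_2^0,L_1^1q_3^0\rangle$, $\langle Jq_2^0,L_1^0q_3^1\rangle$, $\langle Jq_2^1,L_1^0q_3^0\rangle$ matches the paper. The one shortfall is that you stop at the point where these three inner products must actually be evaluated and combined: you correctly flag that only the first Fourier modes survive and that the dispersion relation together with the definition of $e_{*}$ must be used to reach the compact form $-\kpa\bigl(1+\kpa e_{*}/(\alpha\sh(2\kpa))\bigr)$, but you do not carry the algebra through. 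Since this explicit computation is precisely what produces the coefficient in \eqref{entry-important} (and hence feeds into the index $C(\alpha,\beta)$), the proof as written is an accurate sketch of the paper's argument rather than a complete verification.
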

\subsubsection{Eigenvalues of the transformed matrix $\rM_{\xi}^{\vep}$--proof of Theorem \ref{thm-modulation}}

Given the properties of the entries $\{m_{j\ell}\}$ detailed in the preceding subsection, we are now in a position to analyze the eigenvalues of the transformed matrix $\rM_{\xi}^{\vep}$. 
Denote 
\begin{align}
    \rM_{\xi}^{\vep}=\bpm  \rM_{11} & \vep \rM_{12} \\
    \vep \rM_{21} & \rM_{22}\epm (\vep,\xi) 
\end{align}
where $\rM_{jk}(\vep,\xi),\, 1\leq j, k\leq 2$ are 
$2\times 2$ matrices. 

By \eqref{spectrumrelation}, to prove Theorem \ref{thm-modulation}, it suffices to show the following:
\begin{thm}\label{thm-modulation-matrix}
Let $(\alpha,\beta)\in \rm I\cup II\cup III$. Let $\xi_0, c_0$ be the same as in Theorem \ref{thm-modulation}.
If $(\alpha,\beta)$ is such that the index  $\tilde{C}(\alpha,\beta)$ defined in \eqref{def-index-real} is positive, then there exists $\vep_{2}>0, \xi_2\in(0,\xi_0]$ such that 
for any $|\vep|\leq \vep_2, |\xi|\leq \xi_2$, it holds that
\[
\displaystyle
\sigma\left(\rM_{\xi}^{\vep}\right)\cap B_{c_0}(0)\subset \mathbb{R}\,.
\]
On the other hand, if  $\tilde{C}(\alpha,\beta)<0$ then there is a constant $A=A(\alpha,\beta)$, such that if $|\xi|<A|\vep|\ll 1$, then
$\sigma\big(\rM_{\xi}^{\vep}\big) \cap B_{c_{0}}(0)$ contains two  real eigenvalues and two eigenvalues with nonzero imaginary parts, whose absolute values are of order $\mathcal{O}(|\vep|)$. 

\end{thm}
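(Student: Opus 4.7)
The strategy is the two-step similarity reduction sketched in the introduction. In the first step, since the leading-order block $\rM_{22}(0,0)$ has two distinct real eigenvalues $X_2^\pm \approx 1\pm\sqrt{\alpha}$ (its discriminant at the origin equals $4\alpha>0$ by Lemma \ref{lem-diagentry}), the implicit function theorem supplies a smooth invertible $\rG(\vep,\xi)$ such that $\rG^{-1}\rM_{22}\rG = D(\vep,\xi)$ is diagonal. After conjugating $\rM_\xi^\vep$ by $\mathrm{diag}(I,\rG)$ the matrix becomes $\begin{pmatrix}\rM_{11} & \vep\rM_{12}\rG \\ \vep\rG^{-1}\rM_{21} & D\end{pmatrix}$. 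In the second step, I look for $\rQ=\rQ(\vep,\xi)$ so that conjugation by $\begin{pmatrix} I & \vep\rQ\\0 & I\end{pmatrix}$ annihilates the upper-right block; this amounts to the matrix equation
\[
\rM_{11}\rQ - \rQ D + \rM_{12}\rG \,=\, \vep^{2} \rQ\rG^{-1}\rM_{21}\rQ.
\]
At $(\vep,\xi)=(0,0)$ this linearizes to a Sylvester equation whose operator is invertible because the spectra $\sigma(\rM_{11}(0,0))=\{1-e_*\}$ and $\sigma(D(0,0))=\{1\pm\sqrt{\alpha}\}$ are disjoint --- this is the separation $e_*\neq\sqrt{\alpha}$, valid automatically on $\rm I\cup III$ and imposed in $\rm II$ away from the locus where the denominator of $C(\alpha,\beta)$ vanishes. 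The implicit function theorem then produces a smooth $\rQ(\vep,\xi)$ near the origin, and the transformed matrix is block-triangular:
\[
\begin{pmatrix} \rM_{11}-\vep^{2}\rQ\rG^{-1}\rM_{21} & 0 \\ \vep\rG^{-1}\rM_{21} & D+\vep^{2}\rG^{-1}\rM_{21}\rQ \end{pmatrix}.
\]

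The lower diagonal block is an $O(\vep^{2})$ perturbation of $\mathrm{diag}(X_2^+,X_2^-)$ with entries uniformly bounded apart, hence contributes two real eigenvalues $X_2^\pm+O(\vep^{2})$. All the modulational content resides in the upper block $\rM_{11}^{\mathrm{new}}:=\rM_{11}-\vep^{2}\rQ\rG^{-1}\rM_{21}$, whose eigenvalues are real iff its discriminant
\[
\Delta \,=\, (\tilde m_{11}-\tilde m_{22})^{2} + 4\,\tilde m_{12}\,\tilde m_{21}
\]
is non-negative. By Lemma \ref{lem-diagentry}(1), $\tilde m_{11}-\tilde m_{22}=O(\vep^{2}+\xi^{2})$ and only contributes to higher order. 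Using $\tilde m_{21}=-w_1''(0)/(2\kpa^{2})+O(\vep+\xi^{2})$ and $\tilde m_{12}=m_{12}-\vep^{2}(\rQ\rG^{-1}\rM_{21})_{12}$, together with the expansion \eqref{m12-21} of $m_{12}$, I reduce the analysis to evaluating $(\rQ^{(0)}\rG^{-1}\rM_{21})_{12}$ at $(\vep,\xi)=(0,0)$. Solving the leading-order Sylvester equation $\rM_{11}^{(0)}\rQ^{(0)}-\rQ^{(0)} D^{(0)}=-(\rM_{12}\rG)^{(0)}$ with the explicit coefficients --- in particular the $(1,1)$-entry of $\rM_{12}(0,0)$ furnished by \eqref{entry-important} --- I expect
\[
(\rQ^{(0)}\rG^{-1}\rM_{21})_{12}\big|_{(0,0)} \,=\, \frac{\alpha\,\kpa^{2}\,\sh(2\kpa)}{2(e_*^{2}-\alpha)}\Bigl(1+\frac{\kpa\,e_*}{\alpha\,\sh(2\kpa)}\Bigr)^{2},
\]
which is exactly the first term of $C(\alpha,\beta)$. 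Combining this with the $\vep^{2}$ contribution of $m_{12}$ --- the term $\tfrac{2\kpa k_2}{c\,\p_c\kpa}$ --- yields the key identity
\[
\Delta \,=\, \frac{(w_1''(0))^{2}}{\kpa^{2}}\,\xi^{2} + \frac{2\,\tilde C(\alpha,\beta)}{\kpa^{2}}\,\vep^{2} + \mathrm{h.o.t.}
\]

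From this expression the conclusion is immediate. If $\tilde C(\alpha,\beta)>0$, then $\Delta>0$ uniformly for sufficiently small $(\vep,\xi)$, so all four eigenvalues of $\rM_\xi^\vep$ are real and $\sigma(\rM_\xi^\vep)\cap B_{c_0}(0)\subset\mR$. If $\tilde C(\alpha,\beta)<0$, set $A=\sqrt{-2\tilde C(\alpha,\beta)}/|w_1''(0)|$: whenever $|\xi|<A|\vep|\ll 1$ one has $\Delta<0$, so the upper block produces a complex-conjugate pair of eigenvalues of size $\sqrt{|\Delta|}=O(|\vep|)$, while the lower block continues to contribute two real eigenvalues. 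The hard step is the explicit identification of $(\rQ^{(0)}\rG^{-1}\rM_{21})_{12}|_{(0,0)}$ with the first term of $C(\alpha,\beta)$: beyond the entries already tabulated in Lemma \ref{lem-diagentry} and \eqref{entry-important}, this requires computing the remaining leading-order entries of $\rM_{12}$ and $\rG^{-1}\rM_{21}$ (notably $m_{23}/\vep$, $m_{24}/\vep$ and their symmetric counterparts, which are generated by the Kato extension $\cU^\vep(\xi)$ and the operator $L_1^\vep=(\i\kpa)^{-1}\p_\xi L_\xi^\vep|_{\xi=0}$), an explicit diagonalization of $\rM_{22}(0,0)$, and then solving a $2\times 2$ Sylvester equation whose spectral gap $e_*^{2}-\alpha$ is precisely the obstruction appearing in the formula for $C(\alpha,\beta)$. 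Everything else is routine $2\times 2$ algebra and perturbation theory.
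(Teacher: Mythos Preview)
Your approach is correct and essentially identical to the paper's two-step reduction: diagonalize $\rM_{22}$, then solve the Sylvester-type equation via the implicit function theorem (using the separation $e_*\neq\sqrt{\alpha}$) to block-triangularize, and read off the discriminant of the top-left $2\times 2$ block.

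One simplification you overlook: the ``hard step'' requires less than you claim. Since $m_{12}=O(\xi^2+\vep^2)$, the inverses $\rB_\pm^{-1}=(\rM_{11}-X_2^\pm I)^{-1}$ are lower-triangular to leading order, so the first row of $\rQ$ depends only on the first components of the columns of $\rN_{12}$, i.e., on $m_{13}$ and $m_{14}$; dually, the second column of $\rG^{-1}\rM_{21}$ involves only $m_{32}$ and $m_{42}$. Hence $n_{12}=-(\rQ\rG^{-1}\rM_{21})_{12}$ at leading order depends only on $m_{13},m_{14},m_{32},m_{42}$---not on $m_{23},m_{24},m_{31},m_{41}$. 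Lemma~\ref{lemd1423} then kills $m_{14}$ and $m_{32}$, and the relation $m_{42}=\tfrac{\alpha\sh(2\kpa)}{2}m_{13}+O(\vep^2)$ from \eqref{entry-important} reduces the whole computation to the single explicit evaluation of $m_{13}$ already done there.
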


The idea to get the full description of the eigenvalues of  $\rM_{\xi}^{\vep}$ is to introduce a transformation to make one of the off-diagonal 
$2\times 2$ blocks $\vep\rM_{12}$ or $\vep\rM_{21}$ vanish, whose existence is ensured 
as long as the eigenvalues of $\rM_{11}$ and $\rM_{22}$ are well-separated. 
\begin{prop}\label{prop-separation}
  Let ${X}_1^{\pm}$ and ${X}_2^{\pm}$  be the eigenvalues of $\rM_{11}(\vep,\xi)$ and $\rM_{22}(\vep,\xi)$. Let $(\alpha,\beta)\in \rm I\cup III$
  or $(\alpha,\beta)\in\rm II$ be such that $0<e_{*}\neq \sqrt{\alpha}$ where $e_{*}=w_1'(0)/\kpa$. 
  There is $\ep_2>0$ such that for any 
   $|(\vep,\xi)|\leq \ep_2$, 
   there exists $\iota=\iota(\alpha,\beta,\ep_2)>0$
   for which 
   \begin{align}\label{separation}
 |X_1^{\mu}-X_{2}^{\pm}|>\iota \qquad (\mu\in \{+, -\} ).
\end{align}
Moreover, $X_{2}^{\pm}\in \mR$ and $$\Im X_1^{\pm}\neq 0, \qquad \mathrm{ when } \, \,|\xi|
< \f{2\kpa \, k_2}{c\p_c\kpa\, w_1''(0)}|\vep|\ll 1 .$$
\end{prop}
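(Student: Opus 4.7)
The plan is to analyze the diagonal blocks $\rM_{11}$ and $\rM_{22}$ separately, reading off their eigenvalues from the leading-order expansions in Lemma \ref{lem-diagentry} via the elementary $2\times 2$ trace--discriminant formula. For $\rM_{22}$, I would compute $m_{33}+m_{44}=2+\cO(\vep+\xi^2)$ from \eqref{exp-d1133} and \eqref{diff-digentry}, and $m_{33}m_{44}-m_{34}m_{43}=1-\alpha+\cO(\vep+\xi^2)$ from \eqref{m12-21}, so the discriminant of the characteristic polynomial equals $4\alpha+\cO(\vep+\xi^2)$. Since $\alpha>0$ throughout $\mathrm{I}\cup\mathrm{II}\cup\mathrm{III}$, this remains uniformly positive, hence $X_2^\pm=1\pm\sqrt{\alpha}+\cO(\vep+\xi^2)\in\mR$ for all sufficiently small $|(\vep,\xi)|$, which is the first reality assertion of the proposition.

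For $\rM_{11}$, I would apply the formula $X_1^\pm=\tfrac12(m_{11}+m_{22})\pm\tfrac12\sqrt{(m_{11}-m_{22})^2+4m_{12}m_{21}}$. By \eqref{diff-digentry}, $(m_{11}-m_{22})^2=\cO((\vep+\xi^2)^2)$ is negligible, and multiplying the leading parts of $m_{12}$ and $m_{21}$ from \eqref{m12-21} yields
\[
4\,m_{12}m_{21}\,=\,\frac{(w_1''(0))^2}{\kpa^2}\,\xi^2\;-\;\frac{4\,k_2\,w_1''(0)}{\kpa\,c\,\p_c\kpa}\,\vep^2\;+\;\text{higher-order terms.}
\]
Whenever $|\xi|<A|\vep|\ll 1$ with $A=A(\alpha,\beta)>0$ chosen so the $\vep^2$ contribution dominates the $\xi^2$ contribution (which is possible precisely under the sign assumptions on $w_1''(0)$ and $k_2/(c\,\p_c\kpa)$ built into the statement), the discriminant is strictly negative and $X_1^\pm$ form a complex conjugate pair with $|\Im X_1^\pm|$ of order $\cO(|\vep|)$. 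Also, from \eqref{exp-d1133}--\eqref{diff-digentry}, $\Re X_1^\pm=(1-e_*)+\cO(\vep+\xi^2)$.

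The separation \eqref{separation} follows at once from
\[
|X_1^\mu-X_2^\pm|\,\ge\,|\Re X_1^\mu-X_2^\pm|\,=\,|e_*\pm\sqrt{\alpha}|+\cO(\vep+\xi^2).
\]
Since $e_*>0$ and $\sqrt{\alpha}>0$, one has $e_*+\sqrt{\alpha}>0$ uniformly, while $|e_*-\sqrt{\alpha}|>0$ by the standing hypothesis $e_*\neq\sqrt{\alpha}$; both lower bounds survive the $\cO(\vep+\xi^2)$ perturbation once $|(\vep,\xi)|\le\ep_2$ is small enough, yielding the required uniform $\iota=\iota(\alpha,\beta,\ep_2)>0$. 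The main obstacle is verifying that the higher-order remainders from Lemma \ref{lem-diagentry}---namely $\cO(\xi^2(\xi^2+\vep))$ in $m_{12}$, $\cO(\xi^2+|\vep|)$ in $m_{21}$, and $\cO((\vep+\xi^2)^2)$ in $(m_{11}-m_{22})^2$---do not perturb the sign of the discriminant of $\rM_{11}$ in the narrow regime $|\xi|\lesssim|\vep|$; this reduces to a careful but routine comparison showing that each such remainder is dominated by the leading $\cO(\vep^2)$ contribution of $4m_{12}m_{21}$ once $\vep$ is small, and is the only place where attentive bookkeeping of the error orders is really required.
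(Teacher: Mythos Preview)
Your proof is correct and follows essentially the same approach as the paper's: compute the eigenvalues of each $2\times 2$ diagonal block via the trace--discriminant formula, read off the leading-order values $1-e_*$ and $1\pm\sqrt{\alpha}$ from Lemma~\ref{lem-diagentry}, and conclude the separation from $e_*\neq\sqrt{\alpha}$ together with the sign of the discriminant of $\rM_{11}$ in the regime $|\xi|\lesssim|\vep|$. One minor imprecision: your bound $\Re X_1^\pm=(1-e_*)+\cO(\vep+\xi^2)$ is accurate only when the discriminant of $\rM_{11}$ is negative; when it is nonnegative the $\sqrt{\Delta_1}$ term contributes $\cO(|\vep|+|\xi|)$ (as the paper writes), though this does not affect the conclusion since either error term vanishes as $(\vep,\xi)\to 0$.
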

\begin{proof}
Direct computations show that 
\begin{align*}
    X_1^{\pm}=\f12 \big({m_{11}+m_{22}}\pm \sqrt{\Delta_1}\big), \qquad X_2^{\pm}=\f12 \big({m_{33}+m_{44}}\pm \sqrt{\Delta_2}\big)
\end{align*}
with 
\begin{align}
     \Delta_1=\big({m_{11}-m_{22}}\big)^2+4m_{12}\,m_{21}, \qquad 
   \Delta_2= \big({m_{33}-m_{44}}\big)^2+4 \, m_{34}\,m_{43}\,.
\end{align}
It then follows from Lemma \ref{lem-diagentry}
that 
\begin{align*}
   X_1^{\pm}-(1-e_{*})= {\cO\big(|\vep|+|\xi|\big)},
   \qquad X_2^{\pm}-\big(1\pm \sqrt{\alpha}\big)=\cO\big(|\vep|+\xi^2\big)\in \mR
\end{align*}
so that 
\begin{align*}
   X_1^{\mu}-X_{2}^{\pm}=-\big(e_{*}\pm\sqrt{\alpha}\big)+
   \cO\big(|\vep|+\xi^2\big), \quad (\mu\in \{+, -\} ).
\end{align*}
Therefore, \eqref{separation} holds for sufficiently small  $|\vep|$ and $|\xi|$ as long as $e_{*}\neq \sqrt{\alpha}$.
As shown in Corollary \ref{cor-Dxi0}, 
this condition holds 
for any $(\alpha,\beta)\in \rm I\cup II$, (it holds that $e_{*}>1\geq \sqrt{\alpha}$ ).

Finally, it follows from \eqref{diff-digentry} and \eqref{diag-offdiag} that 
\begin{align*}
    \Delta_1=2\kpa^{-2} w_1''(0) \bigg( \f12 w_1''(0)\xi^2-\f{2\kpa \, k_2}{c\p_c\kpa}\vep^2\bigg)+\cO(\xi^4+ |\vep|^3+|\vep|\xi^2)<0
\end{align*}
for those $(\vep,\xi)$ satisfying
$|\xi|
< \f{4\kpa \, k_2}{c\p_c\kpa\, w_1''(0)}|\vep|\ll 1$. 
\end{proof}
\begin{rmk}\label{rmk-separation-eigen}
    From the above proof, we see easily that if $e_{*}\neq \pm \sqrt{\alpha}$
    \begin{align}\label{entrysep}
       m_{jj}-X_2^{\pm}=
        -(e_{*}\pm\sqrt{\alpha})+\cO(|\vep|+\xi^2)\neq0, \qquad (j=1,2)
    \end{align}
    and thus $\rM_{11}-X_{2}^{\pm}\,\mathrm{Id}_{2\times2}$ is invertible 
    for $|(\vep,\xi)|\leq \ep_2$ by choosing $\ep_2$ smaller if necessary.
\end{rmk}

\begin{rmk}
Recall that the off-diagonal matrices $\vep \rM_{12}$ and $\vep\rM_{21}$ are of order $\cO(\vep)$. Therefore, we can expect the eigenvalues of $\rM_{\xi}^{\vep}$ to differ from $X_1^{\pm}$ and $X_2^{\pm}$ by terms of order $\cO(\vep)$. Since the imaginary part of $X_1^{\pm}$ is also of order $\cO(\vep)$, it is possible that the eigenvalues of $\rM_{\xi}^{\vep}$ are purely real. Our subsequent analysis shows that this can indeed happen. 
\end{rmk}

We now present the proof of Theorem \ref{thm-modulation-matrix}, which requires a detailed analysis of the eigenvalues of the matrix $\rM_{\xi}^{\vep}$.

\begin{proof}[Proof of Theorem \ref{thm-modulation-matrix}]

The analysis consists of two steps. In the first step, we apply a transformation to diagonalize $\rM_{22}$, and in the second step, we perform another transformation to eliminate the upper off-diagonal part $\rM_{12}$. 

\textbf{Step 1: Diagonalization of $\rM_{22}$.}
Let $\rG$ be the similarity transformation matrix
associated to $\rM_{22}$, that is 
\begin{align*}
    \rM_{22}=\rG \bpm X_2^{+} & 0\\ 0 & X_2^{-}\epm \rG^{-1}\, .
\end{align*}
Direct computations show that 
\begin{align*}
    \rG= \bpm 1 & 1 \\ -\sqrt{\alpha} & \sqrt{\alpha}\epm +\cO\big(|\vep|+\xi^2\big), \qquad  \rG^{-1}= \f12 \bpm 1 & -\f1{\sqrt{\alpha}} \\ 1 & \f1{\sqrt{\alpha}}\epm +\cO\big(|\vep|+\xi^2\big)\,.
\end{align*}
We introduce 
\begin{align*}
    \rN_{\xi}^{\vep}=\bpm \mathrm{Id}_{2\times2} & 0 \\ 0 & \rG^{-1} \epm \rM_{\xi}^{\vep} \bpm \mathrm{Id}_{2\times2} & 0 \\ 0 & \rG \epm = \bpm \rN_{11}& \vep \rN_{12}\\\vep \rN_{21} &  \rN_{22}\epm (\vep, \xi) 
\end{align*}
with 
\beq \label{def-entry-N}
\rN_{11}=\rM_{11}, \,  \quad  \rN_{12}=\rM_{12}\rG\, , \, \quad \rN_{21}= \rG^{-1}\rM_{21},  \quad 
\rN_{22}=\diag \, (X_2^{+}, X_2^{-}).
\eeq 

\begin{rmk}
   Motivated by Proposition \ref{prop-separation}, it is natural to consider diagonalizing both $\rM_{11}$ and $\rM_{22}$ simultaneously, in order to take advantage of the separation of their eigenvalues. However, since the diagonal entries of $\rM_{11}$ are already close to $X_1^{\pm}$ within an error of $\cO(|\vep| + |\xi|^2)$, it is not necessary to further diagonalize $\rM_{11}$. See Remark \ref{rmk-separation-eigen}.
\end{rmk}

\textbf{Step 2: Elimination of $\rN_{12}$.}
In this step, our goal is to find a similarity transformation that eliminates the upper off-diagonal block $\rN_{12} = \rM_{12}\rG$. The following proposition guarantees the existence of such a transformation. 

\begin{prop}
There exists a $2\times 2$ matrix $\rQ$ such that
\begin{align}\label{sectransform}
\tilde{\rN}_{\xi}^{\vep}\coloneqq 
\bpm \Id & -\vep \rQ \\
0 & \Id \epm  {\rN}_{\xi}^{\vep}  \bpm \Id & \vep \rQ \\
0 & \Id \epm = \bpm \rN_{11}-\vep^2 \rQ \rN_{21} & 0\\ \vep \rN_{21} & \rN_{22}+\vep^2 \rN_{21} \rQ \epm 
\end{align}
\end{prop}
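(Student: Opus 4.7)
The plan is to recognize that demanding the upper-right block of $\tilde{\rN}_\xi^\vep$ to vanish is equivalent to solving a perturbed Sylvester equation for $\rQ$, and then to invoke the implicit function theorem, with the underlying invertibility coming from the eigenvalue separation established in Proposition \ref{prop-separation}.

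First I would carry out the block matrix multiplication
\[
\begin{pmatrix} \Id & -\vep \rQ \\ 0 & \Id \end{pmatrix}\rN_\xi^\vep \begin{pmatrix} \Id & \vep \rQ \\ 0 & \Id \end{pmatrix} = \begin{pmatrix} \rN_{11}-\vep^2 \rQ \rN_{21} & \vep(\rN_{11}\rQ-\rQ\rN_{22}+\rN_{12}-\vep^2 \rQ \rN_{21}\rQ) \\ \vep \rN_{21} & \rN_{22}+\vep^2 \rN_{21}\rQ \end{pmatrix},
\]
which shows that \eqref{sectransform} is equivalent to the matrix equation
\[
F(\rQ,\vep)\coloneqq \rN_{11}\rQ-\rQ \rN_{22}+\rN_{12}-\vep^2 \rQ \rN_{21}\rQ =0,
\]
regarded as an equation in $\rQ \in \mR^{2\times 2}$ depending smoothly on the parameters $\vep$, $\xi$ (with $|(\vep,\xi)| \leq \ep_2$).

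Next I would analyze the linearization $\p_{\rQ}F(\rQ,0)\cdot \rH = \rN_{11}\rH - \rH \rN_{22}$, which is the standard Sylvester operator on $\mR^{2\times 2}$. Since $\rN_{22}=\diag(X_2^+, X_2^-)$ is diagonal, the equation decouples columnwise: for each $j\in\{+,-\}$ and each column $\rH_j$, one must invert $\rN_{11}-X_2^j\,\Id$. By Proposition \ref{prop-separation}, the eigenvalues $X_1^\pm$ of $\rN_{11}=\rM_{11}$ are separated from $X_2^\pm$ by a distance bounded below by $\iota>0$ uniformly in $|(\vep,\xi)|\leq \ep_2$, so $\rN_{11}-X_2^j\,\Id$ is invertible with a uniformly bounded inverse. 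Hence the Sylvester operator is an isomorphism of $\mR^{2\times 2}$, uniformly in the parameters.

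Starting from the obvious solution $\rQ_0$ of the unperturbed equation $\rN_{11}\rQ_0-\rQ_0\rN_{22}+\rN_{12}=0$ at $\vep=0$, I would then apply the implicit function theorem to produce, for $|\vep|$ small, a smooth family $\rQ=\rQ(\vep,\xi)$ solving $F(\rQ,\vep)=0$. Equivalently, one may rewrite the equation as the fixed-point problem $\rQ = \mathcal{L}^{-1}(-\rN_{12}+\vep^2 \rQ \rN_{21}\rQ)$, where $\mathcal{L}$ denotes the Sylvester operator, and run a contraction argument on a small ball around $\rQ_0$; the quadratic term carries a factor $\vep^2$ and hence is a genuine perturbation. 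The main (and essentially only) delicate point is the uniform invertibility of $\mathcal{L}$, which is precisely what the separation condition $e_*\neq\sqrt\alpha$ in Proposition \ref{prop-separation} provides; once this is in hand, everything else is routine and the existence of $\rQ$ with the desired properties follows, yielding the block-triangular form \eqref{sectransform}.
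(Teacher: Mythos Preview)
Your proposal is correct and follows essentially the same approach as the paper: both reduce \eqref{sectransform} to the perturbed Sylvester equation $\rN_{11}\rQ-\rQ\rN_{22}+\rN_{12}-\vep^2\rQ\rN_{21}\rQ=0$, exploit the diagonality of $\rN_{22}$ to decouple columnwise, invoke the invertibility of $\rN_{11}-X_2^{\pm}\Id$ coming from the eigenvalue separation (the paper cites Remark~\ref{rmk-separation-eigen} rather than Proposition~\ref{prop-separation} directly, but the content is the same), and then apply the implicit function theorem. The only addition in the paper is that it records the leading-order expression $\rQ=-\big(\rB_+^{-1}\rU_1,\ \rB_-^{-1}\rU_2\big)+\cO(\vep^2)$, which is needed in the subsequent computation of $\rQ\rN_{21}$ but is not part of the proposition as stated.
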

\begin{proof}
    By straightforward computation, it suffices to find $\rQ$, such that 
    \beq\label{idtosolve}
    \begin{aligned}
     \big(\rN_{11} \rQ-\rQ\rN_{22}+\rN_{12}\big)-\vep^2\rQ \rN_{21}\rQ=0\, .
    \end{aligned}
    \eeq
Define $\rB_{\pm}\coloneqq\rN_{11}-X_2^{\pm}\,\mathrm{Id}_{2\times 2}$ and
write $\rQ=(\rV_1, \rV_2)$, $\rN_{12}=(\rU_1, \rU_2)$ where $\rV_1, \rV_2; \rU_1, \rU_2$ denote the column vectors of $\rQ$ and $\rN_{12}$, then \eqref{idtosolve} is equivalent to 
\beq
\begin{aligned}\label{eq-IFT}
   F_{+}(\rV_1, \rV_2)\coloneqq \rB_{+} \rV_1+ \rU_1-\vep^2 \rQ \rN_{21} \rV_1=0,\\
    F_{-}(\rV_1, \rV_2): = \rB_{-} \rV_2+ \rU_2-\vep^2 \rQ \rN_{21} \rV_2=0.
\end{aligned}
\eeq
By Remark \ref{rmk-separation-eigen}, the real-valued matrices $\rB_{\pm}$ are invertible. Consequently, we apply the inverse function theorem to the map $F\colon \mR^4\rightarrow \mR^4$
defined by
\beqs 
F(\rV)=\bpm F_{+}(\rV_1, \rV_2) \\ F_{-} (\rV_1, \rV_2)\epm, \qquad \rV= \bpm \rV_1\\ \rV_2 \epm.
\eeqs
Then, for $\vep$ sufficiently small, say $|\vep|\leq \ep_3$, there exist vectors  $\rV_1,\rV_2$ such that 
\eqref{eq-IFT} holds and 
\begin{align*}
\rV_1=-\rB_{+}^{-1}\rU_1+\cO_{\mR^2}(\vep^2), \qquad \rV_2=-\rB_{-}^{-1}\rU_2+\cO_{\mR^2}(\vep^2).
\end{align*}
Here, $\cO_{\mathbb{R}^2}(\varepsilon^2)$ denotes terms valued in $\mathbb{R}^2$ whose absolute value is bounded by a constant multiple of $\varepsilon^2$.
 To summarize, we find the matrix $\rQ=-\bpm \rB_{+}^{-1}\rU_1 & \rB_{-}^{-1}\rU_2\epm +\cO_{\cM_{2\times 2}}(\vep^2)$
such that \eqref{idtosolve} holds.
\end{proof}

By \eqref{sectransform}, the eigenvalues of $\tilde{\rN}_{\xi}^{\vep}$ (thus also the  spectrum of $\rM_{\xi}^{\vep}$) are composed of the eigenvalues of the  $2\times 2$ matrices 
 $\rN_{11}-\vep^2 \rQ \rN_{21}$ and $\rN_{22}+\vep^2 \rN_{21} \rQ$. %
 By  definition \eqref{def-entry-N}
 it is easily seen that the eigenvalues of $\rN_{22}+\vep^2 \rN_{21} \rQ$ are 
 \begin{align}\label{tX2pm}
 \tilde{X}_2^{+} =
 =1+\sqrt{\alpha}+\cO(|\vep|+\xi^2), \quad \tilde{X}_2^{-} 
 =1-\sqrt{\alpha}+\cO(|\vep|+\xi^2).
 \end{align}
It suffices to compute the spectrum of the matrix  $\rN_{11}-\vep^2 \rQ \rN_{21}$. The key task is to identify the off-diagonal element in the upper right corner of the matrix $\rQ \rN_{21}$.
A straightforward calculation yields
\begin{align*}
  &  \rB_{\pm}^{-1}=\bpm \f{1}{m_{11}-X_2^{\pm}} & 0\\ \f{m_{21}}{(m_{11}-X_2^{\pm})(m_{22}-X_2^{\pm})} & \f{1}{m_{22}-X_2^{\pm}} \epm +\cO(\xi^2+\vep^2),\\
   & \rN_{12}=\bpm \rU_1 & \rU_2 \epm=\rM_{12}\rG = \f{1}{\vep} \bpm m_{13}-\sqrt{\alpha}\,m_{14} &  m_{13}+\sqrt{\alpha}\,m_{14}\\ m_{23}-\sqrt{\alpha}\,m_{24} &  m_{23}+\sqrt{\alpha}\,m_{24}\epm +\cO(|\vep|+\xi^2),\\
   & \rN_{21}=\rG ^{-1}\rM_{21}=\f{1}{2\vep}
   \bpm  m_{31}-\f{1}{\sqrt{\alpha}}m_{41}& m_{32}-\f{1}{\sqrt{\alpha}}m_{42} \\  m_{31}+\f{1}{\sqrt{\alpha}}m_{41} &   m_{32}+\f{1}{\sqrt{\alpha}}m_{42} \epm +\cO(|\vep|+\xi^2)\,.
\end{align*}
This implies that
\[
-\rQ \rN_{21}=\bpm n_{11} & n_{12} \\ n_{21} & n_{22} \epm
\]
with  the real numbers $n_{11}, n_{21}, n_{22}=\cO(1)$, and
\beqs
n_{12}=\f{(m_{13}-\sqrt{\alpha}m_{14})(m_{32}-\f{1}{\sqrt{\alpha}}m_{42})}{2\vep^2\,(m_{11}-X_2^{+})}+\f{(m_{13}+\sqrt{\alpha}m_{14})(m_{32}+\f{1}{\sqrt{\alpha}}m_{42})}{ 2\vep^2\,(m_{11}-X_2^{-})}.
\eeqs
It then follows from \eqref{diag-offdiag} and \eqref{entrysep} that 
\begin{align*}
n_{12}&=\f{m_{13}m_{42}}{2\sqrt{\alpha}\,\vep^2} \bigg(\f{1}{m_{11}-X_2^{-}}-\f{1}{m_{11}-X_2^{+}}\bigg)+\cO(\xi^2)\, \\
   &= -\f{m_{13}m_{42}}{\,\vep^2(e_{*}^2-\alpha)}+\cO(|\vep|+\xi^2)\,  \\
   &=-\f{\alpha \kpa^2 \sh(2\kpa)}{\,2(e_{*}^2-\alpha)}\big(1+\f{\kpa e_{*}}{\alpha \sh(2\kpa)}\big)^2+\cO(|\vep|+\xi^2).
\end{align*}
Consequently, the eigenvalues of 
 $\rN_{11}-\vep^2 \rQ \rN_{21}=\big(m_{j\ell}+\vep^2 n_{j\ell}\big)_{1\leq j,\ell\leq 2}$ read
\begin{align}\label{realeigen-up}
\tilde{X}_1^{\pm}=\f12\bigg(m_{11}+m_{22}+\vep^2(n_{11}+n_{22})+\sqrt{\tilde{\Delta}_1}\bigg) 
 \end{align}
with  
\begin{align*}
  \tilde{\Delta}_1= \big(m_{11}-m_{22}+\vep^2(n_{11}-n_{22})\big)^2 +4 \big(m_{21}+\vep^2 n_{21}\big)\big(m_{12}+\vep^2 n_{12}\big).
\end{align*}
By virtue of \eqref{diff-digentry} and \eqref{m12-21} it holds that 
\beq\label{criteria}
\begin{aligned}
   \tilde{\Delta}_1&=4\, m_{21}\big(m_{12}+\vep^2 n_{12}\big)+\cO(\xi^4+\vep^4) \\
   &= \kpa^{-2}w_1''(0) \bigg( w_1''(0)\xi^2+2C\vep^2\bigg)+\cO\big((\xi^2+\vep^2)(|\xi|+|\vep|)\big),
\end{aligned}
\eeq
where the constant 
\begin{align*}
C=C(\alpha,\beta)= \f{\alpha \kpa^2 \sh(2\kpa)}{\,2(e_{*}^2-\alpha)}\Big(1+\f{\kpa e_{*}}{\alpha \sh(2\kpa)}\Big)^2-\f{2\kpa \,k_2}{c \p_c \kpa}.
\end{align*}
Note that the wave number $\kpa$ is determined 
 by $(\alpha, \beta)$ from  the dispersion relation \eqref{disp-relation} and $k_2$ is defined in \eqref{def-k2}.

 By \eqref{realeigen-up} and \eqref{criteria}
 we see that if $\tilde{C}=\tilde{C}(\alpha,\beta)=w_1''(0)C(\alpha,\beta)>0$, then for $|(\vep,\xi)|$ sufficiently small,   $\tilde{\Delta}_1>0$ and thus $X_1^{\pm}\in \mR$. On the other hand, if $\tilde{C}<0,$
 then as long as $|\xi|\leq A |\vep| \ll 1$ with $A=-\f{ 2\tilde{C}}{w_1''(0)}$ then  $\tilde{\Delta}_1<0$ and 
$ |\Im \tilde{X}_1^{\pm}|=\cO(|\vep|)$.
This completes  the proof of Theorem \ref{thm-modulation-matrix}.
\end{proof}

\begin{rmk}\label{rmk-specurve}
It follows from the above analysis and \eqref{spectrumrelation}, \eqref{tX2pm}, \eqref{realeigen-up} that,
the spectrum of $\mathrm{D}_{\xi,\vep}$ is composed by, for $|\vep|,|\xi|$ small enough, 
\begin{align}
  &   \i \,\kappa\,\xi\, \tilde{X}_{2}^{\pm} 
    =  \i \,\kappa\,\xi \big(1\pm\sqrt{\alpha}+\cO(|\vep|+\xi^2)\big)\in \i \mR, \label{eigen1}\\
    &    \i \,\kappa\,\xi\tilde{X}_{1}^{\pm}=
\f{ \i \,\kappa\,\xi}{2} \bigg(\underbrace{m_{11}+m_{22}+\vep^2(n_{11}+n_{22})}_{\in\,\mR}+\sqrt{\tilde{\Delta}_1}\bigg) \label{eigen2}
\end{align}
where ${\tilde{\Delta}_1}=\cA_1  (\xi,\vep) \big(\cA_2(\xi,\vep)\xi^2-\cA_3(\xi,\vep)\vep^2\big)$ and 
\begin{align*}
     \cA_1= \kappa^{-2} w_1''(0)+\cO(\xi^2+\vep^2); \quad \,
  \cA_2= w_1''(0)+\cO(\xi^2+\vep^2); \quad \,
 \cA_3= -2C+\cO(\xi^2+\vep^2).
\end{align*}
Therefore, for a fixed but small amplitude $|\vep|$ as the Floquet parameter $\xi$ varies, the eigenvalues in \eqref{eigen1} remain on and move along the imaginary axis, while the eigenvalues in \eqref{eigen2} stay on the imaginary axis if $\tilde{C}=w_1''(0)C(\alpha,\beta)>0$ and
trace out an  “8”-shaped curve if $\tilde{C}<0.$
\end{rmk}

\appendix
\section{Wave profiles}

We summarize here some useful facts for the wave profiles $({\zeta}_{\vep}, {\vp}_{\vep})$ and the wave number which have essentially been proven in \cite[Appendix A]{Mariana-Tien-Erik}.
\begin{lem}\label{lem-waveprofile}
The wave number $k_{\vep}$ has the expansion
\beq
 k_{\vep}=k_0+\vep^2 k_2+\cO(\vep^4)
\eeq
where $k_0=\kappa$ is the unique simple root of the dispersion relation \eqref{disp-relation-1}
and 
\begin{align}\label{def-k2}
    k_2=
\frac{\kappa^3}{d(\kappa)}\tilde{\chi}
\end{align}
with 
\beq\label{def-tchi}
\begin{aligned}
  \tilde{\chi}&= (9\alpha\beta + 16)\kappa 
- 12\alpha\beta \kappa \ch(2\kappa) 
+ 3\alpha\beta \kappa \ch(4\kappa) \\
&\qquad - 8\alpha(2c(\kappa) - 1)\sh(2\kappa) 
- 4\alpha(c(\kappa) + 2)\sh(4\kappa)
\end{aligned}
\eeq
and
\begin{align*}
    c(\kappa)=-1-\frac{  \kappa\left( \ch(2\kappa) + 2 \right)}{D(2\kappa)}; \quad d(\kappa)=32\alpha \bigg( 2\beta\kappa\left( \ch(2\kappa) - 1 \right) + 2\kappa - \sh(2\kappa) \bigg).
\end{align*}
It holds that $k_2>0$ when $\alpha\in (0,1), \beta>0$ or $\alpha=1, \beta\in (0,\f13)$.
Moreover, the following expansion holds for the wave profiles:
\begin{align} \label{expan-profile}
& \zeta_{\vep}=\vep\zeta_1+\vep^2\zeta_2+\cO(\vep^3), \qquad \vp_{\vep}=\vep\vp_1+\vep^2\vp_2+\cO(\vep^3),
\end{align}
with 
\beq\label{def-expzetavp}
\begin{aligned}
    \zeta_1=\sh(\kappa)\cos (x), \quad \zeta_2=\frac{\kappa}{4}\big(1+c(\kappa)\big)\sh (2\kappa) \cos(2x)-\frac{\kappa^2}{4\alpha}, \\
    \vp_1=\ch(\kappa)\sin (x), \quad \vp_2=\frac{\kappa}{4}\big(c(\kappa)\ch(2\kappa)+2\sh^2(\kappa)\big)\sin (2x).
\end{aligned}
\eeq
\end{lem}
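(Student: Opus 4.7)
The plan is a standard Lyapunov--Schmidt reduction applied to the stationary version of the scaled profile equations, expanded to third order in the amplitude~$\varepsilon$. The proof is essentially contained in \cite[Appendix A]{Mariana-Tien-Erik}, so here I only outline the structure and indicate where the formula for $k_2$ comes from.

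First, drop $\partial_t$ in the scaled form of \eqref{sur-mov-uni} (after $x\mapsto k_\varepsilon x$) and work on $2\pi$-periodic functions with $\zeta$ even and $\varphi$ odd in~$x$; this kills the translational invariance and turns the problem into a map $\mathcal{F}(\zeta,\varphi;k)=0$ between suitable Sobolev spaces. Its linearization at the trivial solution is the constant-coefficient operator
\[
D\mathcal{F}(0,0;k)=\begin{pmatrix} k\partial_x & |kD|\tah(|kD|)\\ \beta k^2\partial_x^2-\alpha & k\partial_x\end{pmatrix},
\]
whose Fourier symbol on mode $e^{\i jx}$ has determinant $-D(jk)/\tah(jk)$ with $D$ as in \eqref{disp-relation-1}. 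In the even/odd class, the kernel at $k=\kappa$ is one-dimensional (it consists of the first-harmonic Bloch mode), the cokernel is one-dimensional, and the operator is Fredholm of index zero on the orthogonal complement. I impose the normalization $\int_0^{2\pi}\zeta\cos x\,dx=\pi\varepsilon\sinh(\kappa)$, which pins down $\varepsilon$ as the amplitude parameter.

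Next, I insert the expansions
\[
\zeta=\varepsilon\zeta_1+\varepsilon^2\zeta_2+\varepsilon^3\zeta_3+O(\varepsilon^4),\quad \varphi=\varepsilon\varphi_1+\varepsilon^2\varphi_2+\varepsilon^3\varphi_3+O(\varepsilon^4),\quad k_\varepsilon=\kappa+\varepsilon k_1+\varepsilon^2 k_2+O(\varepsilon^3),
\]
together with Lannes' Taylor expansion of $G_k[\zeta]$ in $\zeta$, and match powers of $\varepsilon$.
\emph{Order} $\varepsilon$: the pair $(\zeta_1,\varphi_1)$ must lie in the kernel, which with the normalization gives $\zeta_1=\sinh(\kappa)\cos x$ and $\varphi_1=\cosh(\kappa)\sin x$.
\emph{Order} $\varepsilon^2$: the linearized system acquires a quadratic forcing from $(\zeta_1,\varphi_1)$ and a contribution $k_1\,\partial_k D\mathcal{F}(0,0;\kappa)(\zeta_1,\varphi_1)$. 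The quadratic forcing involves only the $0$-th and $2$-nd harmonics by parity, so the Fredholm solvability condition (projection onto the first-harmonic cokernel) forces $k_1=0$. Inverting the linearization against $\cos 2x,\sin 2x$ and the constant mode of the Bernoulli equation yields the explicit $(\zeta_2,\varphi_2)$ in \eqref{def-expzetavp}, with $c(\kappa)$ arising from the ratio of coefficients at the second harmonic.
\emph{Order} $\varepsilon^3$: the forcing now has a nontrivial first-harmonic component coming from (i) cubic self-interactions of $(\zeta_1,\varphi_1)$, (ii) bilinear interactions of $(\zeta_1,\varphi_1)$ with $(\zeta_2,\varphi_2)$, and (iii) the term $k_2\,\partial_k D\mathcal{F}(0,0;\kappa)(\zeta_1,\varphi_1)$. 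The solvability condition now reads as a single scalar equation, linear in $k_2$, whose coefficient of $k_2$ is proportional to $D'(\kappa)$ (nonzero since $\kappa$ is a simple root) and whose right-hand side assembles into the expression $\tilde\chi$ in \eqref{def-tchi}. Solving gives $k_2 = \kappa^3\tilde\chi/d(\kappa)$.

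The main obstacle is the bookkeeping at order $\varepsilon^3$: computing the bilinear and cubic contributions from the Dirichlet--Neumann operator and the capillary pressure to the $\cos x$--coefficient of the forcing is tedious, and each of the terms $\ch(2\kappa), \ch(4\kappa), \sh(2\kappa), \sh(4\kappa)$ in $\tilde\chi$ tracks back to a specific second-order product $\zeta_1\cdot\zeta_2$ or $\varphi_1\cdot\zeta_2$ reduced via product-to-sum identities. Once the formula is in hand, the sign claim $k_2>0$ in region $\mathrm{I}\cup\mathrm{III}$ reduces to checking that $\tilde\chi/d(\kappa)>0$ along the dispersion curve $\alpha+\beta\kappa^2=\kappa/\tah(\kappa)$: one eliminates $\alpha$ (or $\beta$) using this relation and verifies the resulting one-variable inequality by elementary monotonicity, which is exactly what is carried out in \cite[Appendix A]{Mariana-Tien-Erik}.
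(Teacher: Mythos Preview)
Your proposal is correct and follows the standard Lyapunov--Schmidt approach; the paper itself does not give a proof of this lemma but simply states the result and cites \cite[Appendix A]{Mariana-Tien-Erik}, which is exactly the reference you invoke. Your outline is therefore more detailed than what appears in the paper, but entirely in line with the intended derivation.
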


\section{Construction of the basis for the generalized kernel of $L_0^{\vep}$}\label{appen-kernel}

Recall the operator $L_0^{\vep}$ is defined as 
\beqs 
 L_0^{\vep}\coloneqq\left( \begin{array}{cc}
   \p_x(d_{\vep}\cdot)  &  G_0[\zeta_{\vep}]   \\[5pt]
 P_0[\zeta_{\vep}] -  w_{\vep}& d_{\vep}\p_x
\end{array}\right),
\eeqs
where $G_0[\zeta_{\vep}]$ and $P_0[\zeta_{\vep}]$ are defined in \eqref{def-GP}.
In this section, we prove Proposition \ref{prop-kernel}, concerning the construction of the basis for the generalized kernel of $L_0^{\vep}\,$.

\subsection{Determination of ${q}_1^{\vep}(0,\cdot), {q}_2^{\vep}(0,\cdot)$}

 To find ${q}_1^{\vep}(0,\cdot), {q}_2^{\vep}(0,\cdot)$, we will first find 
 two generalized eigenfunctions of the original linear operator (before using the good unknown) 
 \beq \label{relationLtL}
{\cL}_0^{\vep}\coloneqq\left( \begin{array}{cc}
   1   & 0 \\
   -Z_{\vep}   & 1
 \end{array}\right)^{-1}L_0^{\vep}\left( \begin{array}{cc}
   1   & 0 \\
   -Z_{\vep}   & 1
 \end{array}\right),
 \eeq
 and then transform to $L_0^\vep$.

 By definition, the background 1-d waves $(\zeta_{\vep}, \vp_{\vep})^t$ solve the equation
\beqs \label{profile eq}
    \left\{ \begin{array}{l}
      k_{\vep} \p_x \zeta_{\vep}+ G_0[\zeta_{\vep}]\varphi_{\vep}=0, \\
       k_{\vep}  \p_x\vp_{\vep}-\f12 (k_{\vep}\p_x\varphi_{\vep})^2+\f12 \f{(G_0[\zeta_{\vep}]\vp_\vep+k_{\vep}^2\p_x\vp_{\vep}\cdot\p_x\zeta_{\vep})^2}{1+|k_{\vep}\p_x\zeta_{\vep}|^2}+\beta k_{\vep}\p_x \big(\f{k_{\vep}\p_x\zeta_{\vep}}{\sqrt{1+(k_{\vep}\p_x\zeta_{\vep})^2}}\big)-\alpha\zeta_{\vep}=0\, . 
    \end{array}
    \right. 
\eeqs
First, by the translational invariance, we get by differentiating the above profile equations that 
\beqs 
{\cL}_0^{\vep}\begin{pmatrix}
     \p_x \zeta_{\vep}  \\
    \p_x \vp_{\vep} 
\end{pmatrix}
=0\, .
\eeqs
Note that $\zeta_\vep$, $\varphi_\vep$ also depend smoothly on the parameters $\alpha$, $\beta$.
To find the associated generalized kernel it is useful to think of $\alpha$, $\beta$ as functions of $h$, $c$ defined through the relation \eqref{eq-Froude-Weber}, and to use the change of variable 
\beqs 
X= hx, \quad Z=hz, \quad \tilde{\zeta}_{\vep}(X)=h\,\zeta_{\vep}(x), \quad \tilde{\vp}_{\vep}(X)=c\, h\,\vp_{\vep}(x)
\eeqs
and study the resulting  system where the wave speed $c$ appears explicitly: 
 \beqs 
    \left\{ \begin{array}{l}
     c k_{\vep} \p_X \tilde{\zeta}_{\vep}+ \tilde{G}_0[\tilde{\zeta}_{\vep}]\tilde{\varphi}_{\vep}=0, \\
      c k_{\vep}  \p_X\tilde{\vp}_{\vep}-\f12 (k_{\vep}\p_X\tilde{\varphi}_{\vep})^2+\f12 \f{(\tilde{G}_0[\tilde{\zeta}_{\vep}]\vp_\vep+k_{\vep}^2\p_X\tilde{\vp}_{\vep}\cdot\p_X\tilde{\zeta}_{\vep})^2}{1+|k_{\vep}\p_X\tilde{\zeta}_{\vep}|^2}+T k_{\vep}\p_X \big(\f{k_{\vep}\p_X\tilde{\zeta}_{\vep}}{\sqrt{1+(k_{\vep}\p_X\tilde{\zeta}_{\vep})^2}}\big)-g\tilde{\zeta}_{\vep}=0\, . 
    \end{array}
    \right. 
\eeqs
Denote by $\tilde{\cL}^{\vep}$ the linearized operator 
around $(\tilde{\zeta}_{\vep}, \tilde{\vp}_{\vep})$, with the expansion
$$\tilde{\cL}_{\xi}^{\vep}=\tilde{\cL}^{\vep}(\p_x+\i\,\xi)=\tilde{\cL}_0^{\vep}(\p_x)+\i\, k_{\vep}\xi\tilde{\cL}_{1}^{\vep}(\p_x)+\cO_{B(Y_{\per})}(\xi^2).$$
 Since $\tilde{\zeta}_{\vep}$ and $\tilde{\vp}_{\vep}$ depend smoothly on the wave speed $c$ and the amplitude $\vep$, we can
differentiate the above equation with respect to $c$ 
to find that 
\beq\label{eq-pc}
\begin{aligned}
&\tilde{\cL}_0^{\vep} \begin{pmatrix}
\p_{c} \tilde{\zeta}_{\vep}\\
\p_{c} \tilde{\vp}_{\vep}
\end{pmatrix}\\
 &\quad=\begin{pmatrix}
    ck_{\vep}\p_X\p_{c} \tilde{\zeta}_{\vep}+\tilde{G}_0[\tilde{\zeta}_{\vep}]\p_{c}\tilde{\varphi}-\tilde{G}_0[\tilde{\zeta}_{\vep}]\big(\tilde{Z}_{\vep}\,\p_{c}\tilde{\zeta}_{\vep}\big)-k_{\vep}\p_X\big(\tilde{v}_{\vep}\p_{c}\tilde{\zeta}_{\vep}\big)\\
    ck_{\vep}\p_X\p_{c} \tilde{\vp}_{\vep}-\tilde{v}_{\vep}k_{\vep}\p_X\p_{c} \tilde{\vp}_{\vep}+ \tilde{Z}_{\vep} \, \tilde{G}_0[\tilde{\zeta}_{\vep}]\big(\p_{c}\tilde{\vp}_{\vep}-\tilde{Z}_{\vep}\p_{c} \tilde{\zeta}_{\vep}\big)-%
\tilde
{Z_{\vep}}k_{\vep}\p_X \tilde{v}_{\vep} \p_{c} \tilde{\zeta}_{\vep}+(\tilde{P}_0[\tilde{\zeta}_{\vep}]-g) \p_{c} \tilde{\zeta}_{\vep}
\end{pmatrix}\\
& \quad =-k_{\vep}\p_X \begin{pmatrix}
 \tilde{\zeta}_{\vep}\\
 \tilde{\vp}_{\vep}
\end{pmatrix}-\p_c k_{\vep} \, \tilde{\cL}_1^{\vep}\p_X\begin{pmatrix}
 \tilde{\zeta}_{\vep}\\
 \tilde{\vp}_{\vep}
\end{pmatrix},
\end{aligned}
\eeq
where 
\begin{align*}
 \tilde{P}_0[\tilde{\zeta}_{\vep}]\coloneqq T k_{\vep}\p_X\bigg(
\f{k_{\vep}\p_X}{(1+(k_{\vep}\p_X\tilde{\zeta}_{\vep})^2)^{\f32}}\bigg)
\end{align*}
and 
\beqs
\tilde{Z}_{\vep}=\f{\tilde{G}_0[\tilde{\zeta}_{\vep}]\tilde{\varphi}_{\vep}+k_{\vep}^2\p_X\tilde{\vp}_{\vep}\cdot\p_X\tilde{\zeta}_{\vep}}{{1+(k_{\vep}\p_X\tilde{\zeta}_{\vep})^2}}, \qquad \tilde{v}_{\vep}=k_{\vep}\p_X \tilde{\vp}_{\vep}-\tilde{Z}_{\vep}k_{\vep}\p_X \tilde{\zeta}_{\vep}\, .
\eeqs
Similarly, it holds that 
\begin{align}\label{eq-pep}
    \tilde{\cL}_0^{\vep} \begin{pmatrix} 
  \p_{\vep} \tilde{\zeta}_{\vep} \\
   \p_{\vep} \tilde{\vp}_{\vep}
    \end{pmatrix}= -\p_{\vep}k_{\vep} \, \tilde{\cL}_1^{\vep}\p_X \begin{pmatrix} 
 \tilde{\zeta}_{\vep} \\
 \tilde{\vp}_{\vep}
    \end{pmatrix}.
\end{align}
Combining \eqref{eq-pc} and \eqref{eq-pep}, we  readily get that 
\begin{align*}
    \tilde{\cL}_0^{\vep}\, \tilde p_2^{\vep}= \frac{\vep  k_{\vep} \p_{\vep}k_{\vep}}{\p_c k_{\vep}} \tilde p_1^{\vep} \quad \quad  \text{ with \,} \tilde p_1^{\vep}=\vep^{-1}\begin{pmatrix}
     \p_X \tilde{\zeta}_{\vep}  \\
    \p_X \tilde{\vp}_{\vep}  
\end{pmatrix}, \quad \tilde p_2^{\vep}=\p_{\vep}\begin{pmatrix}
    \tilde{\zeta}_{\vep}  \\
 \tilde{\vp}_{\vep} 
\end{pmatrix}-\frac{\p_{\vep} k_{\vep}}{\p_c k_{\vep}}\p_c \begin{pmatrix}
    \tilde{\zeta}_{\vep}  \\
 \tilde{\vp}_{\vep}  
\end{pmatrix}.
\end{align*}
Changing back to the original variable $x$, we find that 
\begin{align*}
    \cL_0^{\vep}\, 
    p_2^{\vep}= \frac{\vep  k_{\vep} \p_{\vep}k_{\vep}}{c\,\p_c k_{\vep}}  p_1^{\vep} \quad \quad  \text{ with \,}  p_1^{\vep}=\vep^{-1}\begin{pmatrix}
     \p_x {\zeta}_{\vep}  \\
    \p_x{\vp}_{\vep}  
\end{pmatrix}, \quad  p_2^{\vep}=\p_{\vep}\begin{pmatrix}
   {\zeta}_{\vep}  \\
{\vp}_{\vep} 
\end{pmatrix}-\frac{\p_{\vep} k_{\vep}}{\p_c k_{\vep}} \begin{pmatrix}
   \p_c{\zeta}_{\vep}  \\
 \p_c(c\vp_{\vep} )/c
\end{pmatrix}.
\end{align*}
Consequently, we find the two eigenfunctions of $L_0^{\vep}$ through the relation \eqref{relationLtL}:
\begin{align*}
    q_j^{\vep}(0,\cdot)=\left( \begin{array}{cc}
   1   & 0 \\
   -Z_{\vep}   & 1
 \end{array}\right) p_j^{\vep}, \qquad j=1,2.
\end{align*}

\subsection{Construction of $q_3^{\vep}(0,\cdot), q_4^{\vep}(0,\cdot)$} We will prove the existence of $q_3^{\vep}(0,\cdot), q_4^{\vep}(0,\cdot)$ in the form of \eqref{q3q4} by the Lyapunov--Schmidt method. It suffices to find a constant $\Lambda_0$, a smooth function $ \Lambda_1(\vep)$
and a function  $u^{\vep}(x)  \perp \ker(L_0^0)$ 
in the space $\tilde{Y}_{\per}\coloneqq\{f\in {Y}_{\per} : f=\text{(even, odd)}^t\},$
such that 
\begin{align}\label{id-uep}
    L_0^{\vep} u^{\vep}=\begin{pmatrix}
      0 \\
   1
   \end{pmatrix}+\big(\vep \Lambda_0\, +\vep^2
   \Lambda_1(\vep)\big)q_1^{\vep}(0,\cdot).
\end{align}
We search for $u^{\vep}$ of the form
\beq\label{form-uep}
u^{\vep}(x)=u^0(x)+ \vep u^1(x)+\vep^2 u^2(\vep,x), \qquad (u^j\in \tilde{Y}_{\per}\,).
\eeq 
Let $ L^{\vep}_0=L^{0}_0+\vep L^{1}_0 +
\vep^2 \frac{L^{\vep}_0-L^{0}_0-\vep L^{1}_0}{\vep^2}$. Substituting  \eqref{form-uep}  into \eqref{id-uep}
 and collecting terms of order $\vep^0, \vep^1,\vep^2$, we find 
\begin{align}
L_0^0 u^0&=\begin{pmatrix}
      0 \\
   1
   \end{pmatrix}, \quad \text{ so that } u^0=-\begin{pmatrix}
    1/\alpha   \\
   0
   \end{pmatrix},\notag\\
   L_0^{0} u^1&=-L_0^1 u^0+\Lambda_0\, q_1^0, \label{eq-u1} \\
 L_0^{\vep} u^2&= -\vep^{-2}\big(L_0^{\vep}-L_0^0-\vep L_0^1\big)\,u^0-\vep^{-1}(L_0^{\vep}-L_0^0) u^1+\Lambda_0 \vep^{-1}\big(q_1^{\vep}-q_1^0\big)+\Lambda_1(\vep)q_1^{\vep}\coloneqq\cH^{\vep}.\label{eq-u2} 
\end{align} 
From the Fredholm alternative we see that the solvability of \eqref{eq-u1} is equivalent to 
the orthogonality condition 
\begin{align}\label{orthocond}
  \big\langle Jq_j^0(0,\cdot),\, -L_0^1 u^0+\Lambda_0\, q_1^0(0,\cdot) \big\rangle=0, \quad j=1,2,3,
\end{align}
where $\{Jq_j^0(0,\cdot)\}_{j=1,2,3}$ form a basis of the  kernel of $(L_0^0)^{*}$ and are defined by 
\begin{align*}
    Jq_1^0(0,\cdot)=\bpm \ch(\kappa) \cos (x)\\ \sh(\kappa)\sin (x)\epm, \quad Jq_2^0(0,\cdot)=\bpm \ch(\kappa)\sin (x) \\ -\sh(\kappa)\cos (x)\epm,\quad
     Jq_3^0(0,\cdot)=\bpm 1\\ 0\epm. \quad 
\end{align*}
Note that  $Jq_4=(0, -1/{\alpha})^t$ is a generalized eigenfunction of $(L_0^0)^{*}$.
However, one can readily check that 
\begin{align*}
L^1 u^0&=-\f{\kappa}{\alpha}\bpm -\p_x v_{\vep}\\\p_x Z_{\vep}\epm\bigg|_{\vep=0} = -\f{\kappa^2}{\alpha}
\bpm  \ch (\kappa)\sin (x)\\ \sh (\kappa) \cos (x) \epm= \bpm \textrm{ odd } \\ \textrm{ even } \epm, \\
q_1^0(0,\cdot) &=
\bpm  -\sh (\kappa)\sin (x)\\ \ch (\kappa) \cos( x )\epm= \bpm \textrm{ odd } \\ \textrm{ even } \epm,
\end{align*}
and thus \eqref{orthocond} holds for $j=1,3$ automatically. On the other hand, one can determine $\Lambda_0$ by the condition for $j=2$:
\begin{align*}
    \Lambda_0=\f{\big\langle Jq_2^0(0,\cdot),\, L_0^1 u^0  \big\rangle}{\big\langle Jq_2^0(0,\cdot),\,  q_1^0(0,\cdot) \big\rangle}=\frac{\kappa^2}{\alpha \sh (2\kappa)}.
\end{align*}
With this in hand, and using the fact $G_0[0](e^{\pm\i x})=\kappa |D| \tah (\kappa |D|)(e^{\pm \i x})=\kappa \tah (\kappa )(e^{\pm\i x}),$
we can solve $u^1$ explicitly by 
\eqref{eq-u1}:
\begin{align*}
    u^1=- \frac{\kappa}{2\alpha}  \begin{pmatrix}
   \ch (\kappa) \cos (x)\\ -\sh (\kappa) \sin (x)
   \end{pmatrix}.
\end{align*}

Let us now proceed to prove the existence of $(\Lambda_1(\vep), u^2(\vep,\cdot))$ that
satisfies \eqref{eq-u2}
which will be provided by the Implicit Function Theorem.  Denote by $\mathbb{Q}_0$ the orthogonal projection onto the orthogonal complement in $Y_{\per}$ of 
$\ker (L_0^0)$ and by $\mathbb{P}_0$ the orthogonal projection onto the orthogonal complement in $Y_{\per}$ of $\ker (L_0^0)^*$. Then \eqref{eq-u2} is equivalent to
\begin{align}
    &\tilde{L}_0^{\vep} u^2= \mathbb{P}_0 \cH^{\vep}, \quad \text{ with } \,\,\tilde{L}_0^{\vep}=\mathbb{P}_0  L_0^{\vep} \mathbb{Q}_0\, ,\\
    & F_j(\vep,\Lambda_1)=\langle Jq_j^0(0,\cdot), (L_0^{\vep}-L_0^0)u^2-\cH^{\vep}\rangle=0, \qquad j=1,2,3, \label{orthcond-123}
\end{align}
 where we have abused notation by considering $\Lambda_1$ as a parameter rather than a function of $\vep$.
Note that we have used the fact $(L_0^0)^{*} (Jq_j^0(0,\cdot))=0,\, j=1,2,3$.

On the one hand,  the formula 
$(\tilde{L}_0^{\vep})^{-1}=(\tilde{L}_0^{0})^{-1}\big(\Id + (\tilde{L}_0^{\vep}-\tilde{L}_0^{0})(\tilde{L}_0^{0})^{-1}\big)^{-1}$ defines an inverse $(\tilde{L}_0^{\vep})^{-1}\colon \tilde{\tilde{Y}}_{\per}\to {\tilde{Y}}_{\per} \cap {\rm Dom}(L_0^0)$  to $\tilde L_0^\vep$ with
\begin{align*}
    \|(\tilde{L}_0^{\vep})^{-1}-(\tilde{L}_0^{0})^{-1}\|_{B(\tilde{\tilde{Y}}_{\per},\tilde{Y}_{\per})}\lesssim \vep, 
\end{align*}
where ${\tilde{\tilde{Y}}}_{\per}\coloneqq \{f\in {Y}_{\per} : f=\text{(odd, even)}^t\}$.
 The existence of the function $u^2(\vep,\cdot)\in \tilde{Y}_{\per}$ is thus ensured, once $\Lambda_1(\vep)$ is known.
On the other hand, it is observed that 
\[
\cH^{\vep}=\bpm \textrm{ odd } \\ \textrm{ even } \epm
\quad \text{ and }
\quad
(L_0^{\vep}-L_0^0)u^2=\bpm \textrm{ odd } \\ \textrm{ even } \epm.
\]
Thus \eqref{orthcond-123} holds directly 
for $j=1,3$ and it suffices to find a smooth function $\Lambda_1(\vep)$ such that
\begin{align*}
    F_2(\vep, \Lambda_1)=\langle Jq_2^0(0,\cdot), (L_0^{\vep}-L_0^0)u^2-\cH^{\vep} \rangle =0. 
\end{align*}
Setting $\vep=0$, one finds a constant $\Lambda_{10}$ such that
 $F(0,  \Lambda_{10})=0$. Moreover,  
 \begin{align*}
 (\p_{\Lambda_1}  F_2)(0, \Lambda_1)=-\langle Jq_2^0(0,\cdot), q_1^0(0,\cdot) \rangle =\pi \sh(2\kpa)\neq 0\,.
 \end{align*}
Consequently, by the Implicit Function Theorem, there exists ${\vep}_3>0$ 
and a $C^1$ curve $\Lambda_1({\vep}): [-{\vep}_3, {\vep}_3]\rightarrow \mR$, such that $\Lambda_{1}(0)=\Lambda_{10}$ and 
\begin{align*}
    F_2(\vep, \Lambda_1(\vep))=0, \quad \forall\, \vep\in [-{\vep}_3, {\vep}_3].
\end{align*}

\section{Expansions of the Dirichlet--Neumann operator}
This appendix gathers several useful results concerning the transformed Dirichlet–Neumann operator $G_{\xi}[\zeta_{\vep}]$, which are used in the main body of the paper. 
\begin{prop}\label{prop-DN}
Let $\vep_0>0$ be small enough and $s\geq 0$.
The map 
\begin{align*}
    G_{\xi}[\zeta_{\vep}]: (-1/2,1/2]\times& [-\vep_0, \vep_0] \rightarrow B\big (H^{s+1}(\mathbb{T}_{2\pi}),H^{s}(\mathbb{T}_{2\pi})\big) \\
   & (\xi,\vep) \mapsto (\vp \mapsto G_{\xi}[\zeta_{\vep}]\vp)
\end{align*}
is smooth. Moreover, denoting 
\begin{align*}
    G_{\ell}^k= \big(\f{1}{\i\kpa}\p_{\xi}\big)^{\ell} \p_{\vep}^k\big(G_{\xi}[\zeta_{\vep}]\big)\big|_{\xi=\vep=0},
\end{align*}
it holds that 
\beq\label{expan-DN}
\begin{aligned}
G_0^0=|\kpa D|\tah |\kpa D|\,, & \qquad G_1^0=-\i \tah(\kpa D)-\f{\kpa\p_x}{\ch^2(\kpa D)} ; \\ 
 G_0^1=-G_0^0\,\zeta^1G_0^0-\kpa \p_x(\zeta^1 \kpa\p_x), &\qquad G_1^1=-\big(G_0^0\,\zeta^1G_1^0+G_1^0\,\zeta^1 G_0^0\big)-\kpa \big(\p_x(\zeta^1\cdot)+\zeta^1\p_x\big),
\end{aligned}
\eeq
where $D={-\i \p_x}$ and $\zeta^1=\p_{\vep}\zeta_{\vep}|_{\vep=0}$.
\end{prop}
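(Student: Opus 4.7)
The proof of Proposition \ref{prop-DN} rests on two well-established ingredients: the smooth dependence of the Dirichlet--Neumann operator $G[\zeta]$ on the surface parametrisation, and the classical shape-derivative formula at the flat surface. The plan is to combine these with the smooth dependence of $(\zeta_\vep, k_\vep)$ on $\vep$ given by Lemma \ref{lem-waveprofile}, together with a careful analysis of the Bloch conjugation in $\xi$.

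For the smoothness claim, observe that $G_\xi[\zeta_\vep]$ is the Dirichlet--Neumann operator of the elliptic problem
\begin{equation*}
\big(k_\vep^{\,2}(\partial_x + i\xi)^2 + \partial_z^2\big)\Phi = 0 \text{ in } 0<z<1+\zeta_\vep, \quad \Phi|_{z=1+\zeta_\vep} = \vp, \quad \partial_z\Phi|_{z=0} = 0,
\end{equation*}
whose coefficients and domain depend smoothly on $(\xi,\vep)$. I would flatten the strip via $(x,z) \mapsto (x,z/(1+\zeta_\vep))$, reducing the problem to a fixed rectangle with coefficients depending smoothly on $(\xi,\vep)$, and then invoke the standard theory (see \cite{Lannes}) to obtain the joint smoothness of $(\xi,\vep) \mapsto G_\xi[\zeta_\vep]$ as an element of $B(H^{s+1}(\mathbb{T}_{2\pi}),H^s(\mathbb{T}_{2\pi}))$.

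For the expansion formulas I would compute each $G_\ell^k$ in turn. The flat-surface operator $G_0^0 = |\kappa D|\tah|\kappa D|$ follows from separation of variables on the strip $\{0<z<1\}$. Next, since $G_\xi[0]$ is still a Fourier multiplier, namely the symbol $\kappa(n+\xi)\tah(\kappa(n+\xi))$ acting on $e^{inx}$, the coefficient $G_1^0$ is obtained by differentiating this symbol in $\xi$ at $\xi=0$ and dividing by $i\kappa$. For $G_0^1$ I would apply the classical shape derivative identity
\begin{equation*}
dG[0]\eta \cdot \vp = -G[0](\eta\, G[0]\vp) - \kappa\partial_x(\eta\, \kappa\partial_x\vp)
\end{equation*}
with $\eta = \zeta^1 = \partial_\vep \zeta_\vep|_{\vep=0}$; note that $\partial_\vep k_\vep|_{\vep=0} = 0$ by Lemma \ref{lem-waveprofile}, so no correction from the scaling enters at first order.

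The most delicate calculation is the mixed derivative $G_1^1$. The strategy is to note that, by the same shape-derivative reasoning applied to the $\xi$-shifted elliptic problem,
\begin{equation*}
\partial_\vep G_\xi[\zeta_\vep]\big|_{\vep=0} = -G_\xi[0]\bigl(\zeta^1\, G_\xi[0]\,\cdot\bigr) - \kappa(\partial_x + i\xi)\bigl(\zeta^1\, \kappa(\partial_x+i\xi)\,\cdot\bigr),
\end{equation*}
and then to differentiate this identity once more in $\xi$ at $\xi=0$, using the product rule together with the normalisation $\partial_\xi G_\xi[0]|_{\xi=0} = i\kappa\, G_1^0$. Collecting the four resulting contributions yields the stated formula. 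The principal obstacle here is bookkeeping: one must carefully track which terms arise from differentiating the inner Fourier multipliers $G_\xi[0]$ versus the outer $\partial_x + i\xi$ factors, and keep the signs and the $1/(i\kappa)$ normalisation consistent with the Bloch conjugation convention in \eqref{def-GP}.
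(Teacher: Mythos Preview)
Your proposal is correct and follows essentially the same route as the paper: flatten the strip and invoke Lannes for joint smoothness in $(\xi,\vep)$, then expand first in $\vep$ via the shape-derivative formula (keeping $\xi$ as a parameter) and subsequently differentiate in $\xi$ to extract $G_1^0$ and $G_1^1$. Your explicit remark that $\partial_\vep k_\vep|_{\vep=0}=0$ (so the wave-number scaling contributes nothing at first order) is a useful point the paper leaves implicit.
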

\begin{proof}
To prove the smoothness of $ G_{\xi}[\zeta_{\vep}]$ in terms of $\xi$ and $\vep$, we use the following equivalent definition of $G_{\xi}[\zeta_{\vep}]$: 
\begin{align*}
G_{\xi}[\zeta_{\vep}]\vp=
\p_z{\Psi}|_{z=1+\zeta_{\vep}}-\p_x\zeta_{\vep}(\p_x+\i \xi)\Psi|_{z=1+\zeta_{\vep}},
\end{align*}
where $\Psi=\Psi(\vep,\xi,x,z)$ solves the following elliptic problem in $\mathbb{T}_{2\pi}\times [0,1+\zeta_{\vep}]$:
\begin{align*}
\big((\p_x+\i \xi)^2+\p_z^2\big)\Psi=0\, , \qquad \p_z\Psi|_{z=0}=0, \qquad \Psi|_{z=1+\zeta_{\vep}}=\vp.
\end{align*}
To get properties for $\Psi$ and thus for $G_{\xi}[\zeta_{\vep}]\vp,$
one can use various changes of variables to straighten the domain. Since $\zeta_{\vep}\in C^{\infty}(\mathbb{T}_{2\pi})$, we can neglect the loss of half a derivative on $\zeta_{\vep}$ and choose a simple one: 
\begin{align*}
    (x,z)\rightarrow (x, \tilde{z}\coloneqq{z}/(1+\zeta_{\vep})), \qquad \Phi(\vep,\xi, x,\tilde{z})=\Psi(\vep,\xi, x,z).
\end{align*}
Then the elliptic problem transforms to (dropping the tilde for simplicity)
\begin{align}\label{ellipticpb}
\f{1}{(1+\zeta_{\vep})^2}\p_z^2 \Phi+\Big((\p_x+\i \xi)-\f{z\zeta_{\vep}}{1+\zeta_{\vep}}\p_z \Big)\Big((\p_x+\i \xi)-\f{z\zeta_{\vep}}{1+\zeta_{\vep}}\p_z \Big)\Phi=0\, \,\, \, \text{in }\Omega=\mathbb{T}_{2\pi}\times [0,1],
\end{align}
with the boundary conditions
\beqs
 \p_z\Phi|_{z=0}=0, \quad \Phi|_{z=1}=\vp\,.
\eeqs
Moreover, it holds that
\begin{align}\label{def-DN-new}
G_{\xi}[\zeta_{\vep}]\vp=\f{1+(\p_x\zeta_{\vep})^2}{1+\zeta_{\vep}}\p_z{\Phi}|_{z=1}- \p_x\zeta_{\vep}(\p_x+\i \xi)\Phi|_{z=1}.
\end{align}

Since $\zeta_{\vep}\in C^{\infty}(\mathbb{T}_{2\pi})$, one can follow the same proof as in Section A.1.2 of \cite{Lannes} to show that for any fixed $\xi_0\in (-\f12,\f12]$, 
the map 
\begin{align*}
    G_{\xi_0}[\zeta_{\vep}]:\, & H^s(\mathbb{T}_{2\pi})\rightarrow 
    B\big (H^{s+1}(\mathbb{T}_{2\pi}),H^{s}(\mathbb{T}_{2\pi})\big) \\
   & \zeta_{\vep} \mapsto (\vp \mapsto G_{\xi_0}[\zeta_{\vep}]\vp) 
\end{align*}
is analytic. Since $G_{\xi_0}[\zeta_{\vep}]$
depends on $\vep$ only through $\zeta_{\vep}$ and 
$\zeta_{\vep}$ is analytic in $\vep$, we see that $G_{\xi_0}[\zeta_{\vep}]$ is analytic as a map from $[-\vep_0,\vep_0]$ to  $B\big (H^{s+1}(\mathbb{T}_{2\pi}),H^{s}(\mathbb{T}_{2\pi})\big)$. On the other hand, one can check during the proof that these norms of mapping are smooth in terms of $\xi$. This is indeed expected since the elliptic problem involves polynomials in terms of $\xi$ and thus 
the map 
\begin{align*}
   & (-1/2,1/2] \rightarrow B\big(H^{s+1}(\mathbb{T}_{2\pi}), H^{s+\f32}(\Omega)\big) \\
 & \qquad \qquad \xi \mapsto \big(\vp\rightarrow \Phi \big)
\end{align*}
is smooth. Therefore, as an element of $B\big (H^{s+1}(\mathbb{T}_{2\pi}),H^{s}(\mathbb{T}_{2\pi})\big),$
$G_{\xi}[\zeta_{\vep}]$ is smooth in $\xi\in (-1/2,1/2]$.

  Given the smooth dependence of  $G_{\xi}[\zeta_{\vep}]$ on both  $\xi$ and $\vep$, we first expand  
  $G_{\xi}[\zeta_{\vep}]$ in terms of $\vep$  treating $\xi$ as a parameter for which the operator remains smooth. Since $G_{\xi}[\zeta_{\vep}]$ depends on $\vep$ only through $\zeta_{\vep}$, the matter reduces to expanding 
  $G_{\xi}[\zeta_{\vep}]$ in terms of $\zeta_{\vep}$.
  Following the same  approach\footnote{The only difference in the present setting is that we consider the elliptic problem on $\mathbb{T}_{2\pi}\times [0,1]$, with a smoothly varying parameter $\xi$.} as in Theorem 3.21
  and Proposition 3.44
  in \cite{Lannes}, we obtain 
\begin{align*}
G_{\xi}[\zeta_{\vep}]=G_{\xi}[0]+\bigg( \underbrace{ G_{\xi}[0]\zeta_{\vep}G_{\xi}[0]-\kpa(\p_x+\i \xi)\big(\zeta_{\vep}\kpa(\p_x+\i \xi)\big) }_{\coloneqq G_{\xi}^1}\bigg)+\cO_{B\big (H^{s+1}(\mathbb{T}_{2\pi}),H^{s}(\mathbb{T}_{2\pi})\big)}(\vep^2),
\end{align*}
which yields the expressions for $G_0^0$ and $G_0^1$ in \eqref{expan-DN}. Using the smoothness of  
$G_{\xi}[0]$ and $G_{\xi}^{1}$ with respect to $\xi,$
 we further expand them in powers of
 $\xi$ to get the formulas for $G_1^0$ and $G_1^1$. 
\end{proof}

\section{Some properties of the extension operators } \label{appen-extension} 

In this section, we collect several useful properties for the extension operators 
$\cU_{\xi}(\vep)$ and
$\cU^{\vep}(\xi)$ defined 
 respectively in \eqref{def-extop-ep} and \eqref{def-extop-xi}. 

\begin{prop}[Properties of the extension operators]\label{prop-extop}
\,

 \textnormal{(1).} The extension operators $\cU_{\xi}({\vep})$ and 
 $\cU^{\vep}(\xi)$
  preserve the symplectic inner product 
    \beq\label{preseve-syminner}
   \mathcal{S}(f,g)= \mathcal{S}(\mathcal{U}_{\xi}(\vep)f, \mathcal{U}_{\xi}(\vep)g),   \qquad   \mathcal{S}(f,g)= \mathcal{S}(\mathcal{U}^{\vep}(\xi)f, \mathcal{U}^{\vep}(\xi)g)  
     \eeq 
    where $\mathcal{S}(f,g)\coloneqq\langle Jf, g \rangle$.
    
 \textnormal{(2).} The extension operator  $\cU^{\vep}(\xi)$ commutes with the reversibility operator: 
\beq\label{reversibilitycom}
\mathcal{U}^{\vep}(\xi)\mathcal{I}=\mathcal{I}\,\mathcal{U}^{\vep}(\xi),
\eeq
where 
\beq \label{reversibilityop}
\mathcal{I} g =\mathrm{diag}\, (1,-1) \overline{g(-x)}.
\eeq
\end{prop}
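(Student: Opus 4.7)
The two statements of Proposition~\ref{prop-extop} will both be proved using the same template: one introduces an auxiliary quantity/operator built from $\cU_{\xi}(\vep)$ or $\cU^{\vep}(\xi)$ which should be constant in $\vep$ (resp.~$\xi$), verifies that it equals the desired value at $\vep=0$ (resp.~$\xi=0$), and then differentiates using the Kato ODEs \eqref{def-extop-ep}--\eqref{def-extop-xi}. In both cases the derivative vanishes identically provided the spectral projector $\Pi_{\xi}^{\vep}$ enjoys the appropriate compatibility with $J$ or $\cI$, which in turn is a direct consequence of the Hamiltonian structure $(L_{\xi}^{\vep})^{*}=-JL_{\xi}^{\vep}J^{-1}$ together with the observation $\Pi_{\xi}^{\vep}=\frac{1}{2\pi\i}\oint_{\Gamma}(\lambda-L_{\xi}^{\vep})^{-1}\,\d\lambda$ for a contour $\Gamma$ symmetric about $0$.

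For part (1), set $\Phi(\vep)\coloneqq \mathcal{S}(\cU_\xi(\vep)f,\cU_\xi(\vep)g)-\mathcal{S}(f,g)$. Clearly $\Phi(0)=0$, and differentiating via \eqref{def-extop-ep} gives
\begin{equation*}
\Phi'(\vep)=\mathcal{S}(K\cU_\xi(\vep)f,\cU_\xi(\vep)g)+\mathcal{S}(\cU_\xi(\vep)f,K\cU_\xi(\vep)g),\qquad K\coloneqq [\partial_\vep\Pi_\xi^\vep,\Pi_\xi^\vep].
\end{equation*}
It therefore suffices to show that $K$ is $\mathcal{S}$-skew, which I will deduce from the fact that $\Pi_\xi^\vep$ and its $\vep$-derivative are $\mathcal{S}$-\emph{symmetric}: indeed, expanding $K$ and using $\mathcal{S}(Pa,b)=\mathcal{S}(a,Pb)$ for $P\in\{\Pi_\xi^\vep,\partial_\vep\Pi_\xi^\vep\}$ makes the four terms cancel in pairs. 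Symmetry of $\Pi_\xi^\vep$ translates into $(\Pi_\xi^\vep)^{*}=J\,\Pi_\xi^\vep\,J^{-1}$, which I will derive from the Hamiltonian identity as follows: writing $\lambda-(L_\xi^\vep)^{*}=J(\lambda+L_\xi^\vep)J^{-1}$, performing the change of variables $\mu=-\lambda$ in the contour integral defining $(\Pi_\xi^\vep)^{*}$, and using that the boundary $\partial B_{c_0}(0)$ (respectively $\partial B_{r_\ell}(\i\sigma_\ell)\cup\partial B_{r_\ell}(-\i\sigma_\ell)$) is invariant under $\lambda\mapsto -\bar\lambda$ (which for a circle around $0$ amounts to $\mu\mapsto -\mu$ after also conjugating). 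The identical argument with $\partial_\xi$ replacing $\partial_\vep$ covers $\cU^{\vep}(\xi)$.

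For part (2), define the (complex-linear) map $W(\xi)\coloneqq \cI^{-1}\cU^\vep(\xi)\cI$; then $W(0)=\mathrm{Id}$ and I want to show $W$ satisfies the same Kato ODE as $\cU^\vep(\xi)$, so that uniqueness forces $W(\xi)=\cU^\vep(\xi)$. This reduces to proving $\cI\,\Pi_\xi^\vep=\Pi_\xi^\vep\,\cI$. The key inputs are (a)~$A_\xi^\vep\cI=\cI A_\xi^\vep$ (which is checked from the parity of the background waves, as already noted in the proof of Lemma~\ref{lem-entry-impor}), (b)~the direct computation $J\cI=-\cI J$, which combine into $\cI L_\xi^\vep=-L_\xi^\vep\cI$, and consequently
\begin{equation*}
\cI(\lambda-L_\xi^\vep)^{-1}=(\bar\lambda+L_\xi^\vep)^{-1}\cI.
\end{equation*}
Substituting this into the contour formula for $\Pi_\xi^\vep\cI$ and using the antilinearity of $\cI$ (which turns $\d\lambda$ into $\d\bar\lambda$) together with the successive changes of variables $\mu=\bar\lambda$ and $\nu=-\mu$, the contour $\Gamma=\partial B_{c_0}(0)$ is sent back to itself with the original orientation, yielding $\cI\,\Pi_\xi^\vep=\Pi_\xi^\vep\,\cI$ exactly.

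The main delicate point in the argument will be the careful bookkeeping of orientations and of the antilinearity of $\cI$ when performing the changes of variables inside the contour integrals; this is purely technical but has to be carried out with some care, especially to guarantee that the various minus signs coming from $J^2=-I$, from the conjugation $\mu=\bar\lambda$, and from the reflection $\nu=-\mu$ combine to give the identity rather than its negative. Once this is in place, both statements of the proposition follow from essentially identical Kato-ODE uniqueness arguments.
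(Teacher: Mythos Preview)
Your proposal is correct and follows essentially the same strategy as the paper: both rely on the identities $(\Pi_\xi^\vep)^{*}=J\Pi_\xi^\vep J^{-1}$ (up to a harmless sign) and $\cI\,\Pi_\xi^\vep=\Pi_\xi^\vep\,\cI$ (again up to sign), derived from the Hamiltonian and reversibility structures of $L_\xi^\vep$, and then conclude via the Kato ODE. For part (1) there is a minor tactical difference: the paper introduces the dual extension operator $\mathcal{V}^{\vep}(\xi)$ solving the Kato ODE for $(\Pi_\xi^\vep)^{*}$, shows $(\mathcal{V}^\vep)^{*}\mathcal{U}^\vep=\mathrm{Id}$ and $\mathcal{V}^\vep=J^{-1}\mathcal{U}^\vep J$, and combines these; you instead differentiate $\mathcal{S}(\mathcal{U} f,\mathcal{U} g)$ directly and use that the commutator $[\partial_\vep\Pi,\Pi]$ is $\mathcal{S}$-skew. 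Both arguments are equivalent and rest on the same projection identity.

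One small simplification: you do not need the union $\partial B_{r_\ell}(\i\sigma_\ell)\cup\partial B_{r_\ell}(-\i\sigma_\ell)$. A single circle $\Gamma=\partial B_{r_\ell}(\i\sigma_\ell)$ centered on the imaginary axis already satisfies $-\overline{\Gamma}=\Gamma$, since $\lambda\mapsto -\bar\lambda$ is reflection across the imaginary axis; this is exactly the symmetry your contour change of variables requires.
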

Similar results have already been established in \cite{NRS-EP}. However, for the sake of self-containedness, we briefly sketch their proofs.
\begin{proof}

(1). 
Let us sketch the proof of the second identity; the first one follows from the same arguments. Define the dual extension operator 
\begin{align*}
\p_\xi\mathcal{V}^{\vep}(\xi)
=[\p_\xi(\Pi_{\xi}^{\vep})^{*}, (\Pi_{\xi}^{\vep})^{*}]\ \mathcal{V}^{\vep}(\xi)\,, \qquad
\mathcal{V}^{\vep}(0)=\Id\,,
\end{align*}
where $(\Pi_{\xi}^{\vep})^{*}\coloneqq\oint_{\p B_{\ep_1}(0)} \big(\lambda I-(L_{\xi}^{\vep})^{*}\big)^{-1}\, \d \lambda$.
Let $\xi$ belong to the interval of existence for the 
ODE \eqref{def-extop-xi}.
One proves, on the one hand, $(\mathcal{V}^{\vep})^{*}\cU^{\vep}(\xi)=\Id$ and on the other hand, from the Hamiltonian symmetry 
$(\Pi_\xi^\vep)^*=-J^{-1}\,\Pi_\xi^\vep\,J$, that
$\mathcal{V}^{\vep}(\xi)\,=\,J^{-1}\,\mathcal{U}^{\vep}(\xi)\,J$.

(2). 
From the parity of the background waves, we readily find that $L_{\xi}^{\vep}\mathcal{I}=-\mathcal{I}\,L_{\xi}^{\vep},$
which implies that $\Pi_{\xi}^{\vep}\mathcal{I}=-\mathcal{I}\,\Pi_{\xi}^{\vep}$ and thus \eqref{reversibilitycom}.
\end{proof}

\begin{lem}\label{lem-U1-real}
Let $\cU_1^{\vep}\coloneqq \f{1}{\i \kpa} \p_{\xi}\cU^{\vep}|_{\xi=0}$, then \, $\overline{\cU_1^{\vep}}=\cU_1^{\vep}$, so 
$\cU_1^{\vep}$ maps real-valued functions to real-valued functions. 
\begin{proof}
By the definition \eqref{def-extop-xi},  we have
\begin{align*}
  \cU_1^{\vep}=[\Pi_1^{\vep}, \Pi_0^{\vep}]\,, \qquad \text{ with \,}  \,\Pi_1^{\vep}\coloneqq \f{1}{\i \kpa} \p_{\xi}\Pi_{\xi}^{\vep}|_{\xi=0},\,\,  \Pi_0^{\vep}= \Pi_{\xi}^{\vep}|_{\xi=0} .
\end{align*}
where
\begin{align*}
    \Pi_{\xi}^{\vep}= \oint_{\p B_{\ep_0}(0)} (\lambda I -L_{\xi}^{\vep})^{-1}\, \d\lambda\,.
\end{align*}
On the one hand, $L_0^{\vep}$ has real valued coefficient
and $\overline{G_0^{\vep}[\zeta_{\vep}]}=G_0^{\vep}[\zeta_{\vep}]$, it follows that
$\overline{\Pi_0^{\vep}}=\Pi_0^{\vep}$. On the other hand, using the formula
\begin{align*}
    \Pi_{1}^{\vep}= \oint_{\p B_{\ep_0}(0)} (\lambda I -L_{\xi}^{0})^{-1} L_1^{\vep}\,(\lambda I -L_{\xi}^{\vep})^{-1}\, \d\lambda\,,
\end{align*}
and noting that $L_1^{\vep}=\f{1}{\i \kpa} \p_{\xi}L_{\xi}^{\vep}|_{\xi=0}$ also has real valued coefficients we conclude that $\overline{\Pi_1^{\vep}}=\Pi_1^{\vep}$. Therefore,     $\overline{\cU_1^{\vep}}=\cU_1^{\vep}$, and the result follows.
\end{proof}
\end{lem}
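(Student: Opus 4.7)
The plan is to write $\cU_1^\vep$ as a commutator of two real-coefficient operators and then argue that the commutator itself preserves real-valuedness. Differentiating the defining ODE \eqref{def-extop-xi} at $\xi=0$ (with $\cU^\vep(0)=\Id$) gives $\p_\xi \cU^\vep(\xi)|_{\xi=0}=[\p_\xi \Pi_\xi^\vep|_{\xi=0},\Pi_0^\vep]$, whence $\cU_1^\vep=[\Pi_1^\vep,\Pi_0^\vep]$ with the notation of the lemma. Since complex conjugation is antilinear and commutes with a linear operator $A$ iff $A$ preserves real-valuedness, the identity $\overline{[P,Q]f}=[\bar P,\bar Q]\bar f$ reduces the statement to showing that both $\Pi_0^\vep$ and $\Pi_1^\vep$ map real-valued functions to real-valued ones.

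For $\Pi_0^\vep$, the key input is that $L_0^\vep$ has real coefficients: the profiles $\zeta_\vep,\vp_\vep$ are real, so $d_\vep, w_\vep, Z_\vep$ are real, and both $G_0[\zeta_\vep]$ and $P_0[\zeta_\vep]$ send real-valued functions to real-valued ones (the former arises from a real elliptic BVP, the latter is a clearly real differential operator). Combined with the fact that the contour $\partial B_{c_0}(0)$ is invariant under $\lambda\mapsto\bar\lambda$, a standard computation
\[
\overline{\Pi_0^\vep f}=\overline{\tfrac{1}{2\pi i}\oint(\lambda I-L_0^\vep)^{-1}f\,\d\lambda}=\tfrac{1}{2\pi i}\oint(\lambda I-L_0^\vep)^{-1}\bar f\,\d\lambda=\Pi_0^\vep\bar f
\]
(the orientation reversal under conjugation cancels the sign flip in $1/(2\pi i)$) establishes the claim.

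For $\Pi_1^\vep$, applying the resolvent identity $\p_\xi(\lambda I-L_\xi^\vep)^{-1}=(\lambda I-L_\xi^\vep)^{-1}(\p_\xi L_\xi^\vep)(\lambda I-L_\xi^\vep)^{-1}$ at $\xi=0$ and multiplying by $1/(i\kpa)$ yields
\[
\Pi_1^\vep=\oint_{\partial B_{c_0}(0)}(\lambda I-L_0^\vep)^{-1}L_1^\vep(\lambda I-L_0^\vep)^{-1}\,\d\lambda,\qquad L_1^\vep\coloneqq \tfrac{1}{i\kpa}\p_\xi L_\xi^\vep\big|_{\xi=0}.
\]
Given step one, everything reduces to checking that $L_1^\vep$ itself preserves real-valuedness. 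Each $\xi$-derivative of a term $(\p_x+i\xi)$ produces a factor of $i$ that is exactly absorbed by the $1/(i\kpa)$ normalization, giving real coefficients in the $d_\vep$ and $P_\xi$ components. The only nontrivial entry is $\frac{1}{i\kpa}\p_\xi G_\xi[\zeta_\vep]|_{\xi=0}$: the same reasoning as for $L_0^\vep$ applies, noting that $\xi$ enters the underlying elliptic problem \eqref{ellipticpb} only via $(\p_x+i\xi)^2$, so differentiating once brings down a single factor of $i$, again cancelled by $1/(i\kpa)$. As a sanity check at $\vep=0$, the explicit expression $G_1^0=-i\tah(\kpa D)-\kpa\p_x/\ch^2(\kpa D)$ from Proposition \ref{prop-DN} is indeed real in this sense, because $\tah(\kpa D)$ (with symbol $\tah(\kpa n)$ odd in $n$) sends real-valued functions to purely imaginary ones, so $-i\tah(\kpa D)$ is real-preserving.

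The main obstacle is the bookkeeping in step three: verifying that after one $\xi$-derivative, every $i$ in $L_\xi^\vep$ appears in the combination $i/(i\kpa)$ (hence real) and not as a free imaginary factor. This is not automatic from smoothness — it is a consequence of $\xi$ entering $L_\xi^\vep$ only through the two real-symmetric combinations $\p_x+i\xi$ and the conjugation $e^{ix\xi}(\cdot)e^{-ix\xi}$ appearing in \eqref{def-GP}, both of which shift a derivative by the real quantity $\xi$.
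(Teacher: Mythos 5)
Your proposal is correct and follows essentially the same route as the paper: the commutator decomposition $\cU_1^\vep=[\Pi_1^\vep,\Pi_0^\vep]$, the resolvent formula for $\Pi_1^\vep$, and the real-coefficient arguments for both projectors. The paper simply asserts that $L_1^\vep$ has real coefficients; your careful bookkeeping of the factors of $i$ coming from $\p_x+i\xi$ and the $e^{\pm i x\xi}$ conjugation, and the sanity check that $G_1^0=-\i\tah(\kpa D)-\kpa\p_x/\ch^2(\kpa D)$ is real-preserving because $\tah(\kpa D)$ has an odd real symbol, fills in exactly what the paper leaves implicit.
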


In the next lemma, we derive expansions of $\cU^0({\xi})$
in $\xi$ up to second order
\begin{lem}\label{lem-exp-extopxi}

\textnormal{(1).} For any $f\in Y_{\per},$
\beq \label{p1-extop}
\cU^0({\xi}) f=\sum_{j\in \{0,\pm 1\}}\langle \,\cU_{\xi}^0 f, e^{\i j x}  \rangle e^{\i j x} .
\eeq

 \textnormal{(2).}  If $V$ is a constant vector in $\mR^2$, then 
  \begin{align}\label{exp-extopxi}
   \cU^0(\xi) V=V, \qquad \cU^0(\xi)(e^{\pm \i x} V)=e^{\pm \i x} \sqrt{\f{\omega_{\pm1}(0)}{\omega_{\pm1}(\xi)}} \bpm 1  & 0\\ 0 & \f{\omega_{\pm1}(\xi)}{\omega_{\pm1}(0)} \epm  V.
  \end{align}
\end{lem}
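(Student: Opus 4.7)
The guiding observation is that $L_\xi^0$ has constant coefficients in $x$, hence commutes with translations; consequently the resolvent $(\lambda I-L_\xi^0)^{-1}$, the spectral projector $\Pi_\xi^0$, and (via the defining ODE \eqref{def-extop-xi}) the extension operator $\cU^0(\xi)$ are all block-diagonal under the Fourier decomposition $f=\sum_{j\in\mathbb{Z}}\hat f(j)\,e^{\i jx}$. Writing $\widehat{L_\xi^0}(j)$, $\widehat{\Pi_\xi^0}(j)$, $\widehat{\cU^0(\xi)}(j)$ for the corresponding $2\times 2$ blocks, the lemma reduces to analysis mode by mode.

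For (1), whenever $|j|\ge 2$ the two eigenvalues $\lambda_j^{\pm}(\xi)$ of $\widehat{L_\xi^0}(j)$ lie outside $B_{c_0}(0)$ for all small $|\xi|$---exactly the separation established in the proof of Proposition \ref{prop-balls}---so $\widehat{\Pi_\xi^0}(j)\equiv 0$, the commutator $[\partial_\xi\Pi_\xi^0,\Pi_\xi^0]$ vanishes on mode $j$, and hence $\widehat{\cU^0(\xi)}(j)=\Id$. Since the generalized eigenspace of $L_0^0$ at $0$ is supported on modes $\{0,\pm 1\}$, taking Fourier expansions then yields \eqref{p1-extop}.

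For (2) on mode $0$, both eigenvalues $\lambda_0^{\pm}(\xi)=\i\kappa\xi\pm \i\,w_0(\xi)=O(|\xi|)$ lie in $B_{c_0}(0)$ for small $|\xi|$, so $\widehat{\Pi_\xi^0}(0)\equiv \Id_{\mathbb{C}^2}$, the commutator again vanishes, and $\widehat{\cU^0(\xi)}(0)=\Id$; this gives the first formula in \eqref{exp-extopxi}. For mode $\pm 1$, only $\lambda_{\pm 1}^{\mp}(\xi)$ lies in $B_{c_0}(0)$, so $\widehat{\Pi_\xi^0}(\pm 1)$ is the rank-one projector onto $\text{span}\{(\pm \i,\omega_{\pm 1}(\xi))^T\}$. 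I would then propose the diagonal matrix $\widehat{\cU^0(\xi)}(\pm 1)=\diag\bigl(\sqrt{\omega_{\pm 1}(0)/\omega_{\pm 1}(\xi)},\sqrt{\omega_{\pm 1}(\xi)/\omega_{\pm 1}(0)}\bigr)$ and verify that it (a) sends $(\pm \i,\omega_{\pm 1}(0))^T$ to $\sqrt{\omega_{\pm 1}(0)/\omega_{\pm 1}(\xi)}\,(\pm \i,\omega_{\pm 1}(\xi))^T$ (and analogously for the complementary eigenvector $(\mp \i,\omega_{\pm 1}(0))^T$), so that it intertwines $\widehat{\Pi_0^0}(\pm 1)$ with $\widehat{\Pi_\xi^0}(\pm 1)$; (b) equals $\Id$ at $\xi=0$; and (c) preserves the symplectic form $\mathcal{S}$ on each one-dimensional spectral subspace (the factor $\omega_{\pm 1}(0)/\omega_{\pm 1}(\xi)$ from the rescaling exactly cancels the change in $\langle J V,V\rangle=\pm 2\i\,\omega_{\pm 1}(\xi)$).

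The main obstacle is to conclude from (a)--(c) that this matrix really is $\widehat{\cU^0(\xi)}(\pm 1)$. I would invoke uniqueness of the Kato extension, which on each $2\times 2$ block is determined by its initial condition together with the intertwining relation $\cU^0(\xi)\Pi_0^0=\Pi_\xi^0\cU^0(\xi)$ and the symplectic normalization guaranteed by Proposition \ref{prop-extop}. An alternative, less elegant route is to substitute the diagonal ansatz directly into the matrix ODE $\partial_\xi\widehat{\cU^0}=[\partial_\xi\widehat{\Pi_\xi^0}(\pm 1),\widehat{\Pi_\xi^0}(\pm 1)]\widehat{\cU^0}$ after writing the rank-one projector explicitly and verify it by differentiation---a short but slightly tedious computation involving $\omega_{\pm 1}'(\xi)$.
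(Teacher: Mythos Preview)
Your overall strategy---working mode by mode after observing that $L_\xi^0$, $\Pi_\xi^0$, and hence $\cU^0(\xi)$ are block-diagonal in Fourier---is exactly the paper's approach, and your treatment of part (1) and of the zero mode in (2) matches the paper.

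The gap is in your primary argument for the modes $\pm 1$. The properties (a)--(c) you list do \emph{not} uniquely determine the Kato extension. On each one-dimensional eigenspace of $\widehat{\Pi_0^0}(\pm 1)$, the sesquilinear form $\mathcal{S}(v,v)=\langle Jv,v\rangle$ constrains an intertwining map only up to a unimodular scalar: if $U$ satisfies (a)--(c), then so does $U\cdot\diag(e^{i\theta_1(\xi)},e^{i\theta_2(\xi)})$ (in the eigenbasis of $\widehat{\Pi_0^0}(\pm 1)$) for any smooth real $\theta_1,\theta_2$ vanishing at $\xi=0$; the cross term $\mathcal{S}(v_1,v_2)$ vanishes here, so no further constraint arises. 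What singles out the Kato transformation among all such intertwining symplectic maps is the extra structural property that the generator $\cU'\cU^{-1}=[\partial_\xi\Pi,\Pi]$ is \emph{off-diagonal} with respect to $\Pi$ (i.e.\ $\Pi[\partial_\xi\Pi,\Pi]\Pi=0=(I-\Pi)[\partial_\xi\Pi,\Pi](I-\Pi)$), which you have not checked.

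Your ``less elegant'' alternative is in fact the right route, and it is precisely what the paper does: compute $\mathcal{M}_\pm(\xi)=\widehat{\Pi_\xi^0}(\pm 1)$ explicitly, evaluate the commutator $[\mathcal{M}_\pm',\mathcal{M}_\pm]=\tfrac{\omega_{\pm1}'}{2\omega_{\pm1}}\,\diag(-1,1)$, and solve the resulting diagonal linear ODE with initial condition $\Id$ to obtain $\diag\bigl(\sqrt{\omega_{\pm1}(0)/\omega_{\pm1}(\xi)},\sqrt{\omega_{\pm1}(\xi)/\omega_{\pm1}(0)}\bigr)$. This is a two-line computation once $\mathcal{M}_\pm$ is written down, so there is no reason to avoid it.
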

\begin{proof}

Since the operator $L_{\xi}^0$ has constant coefficient, 
these can be  computed via the Fourier transform. Let $V$ be a constant vector in $\mR^2$. Then, by definition
\begin{align*}
\Pi_{\xi}^0 (e^{\i j x} V)&=\oint_{B_{\ep}(0)} (\lambda I-L_{\xi}^0)^{-1} (e^{\i j x}V)\, \d \lambda\, \\
&=e^{\i j x} \oint_{B_{\ep}(0)} \f{1}{(\lambda-\lambda_j^{+}(\xi))(\lambda-\lambda_j^{-}(\xi))} \bpm \lambda-\i (j+\xi) & (j+\xi)\tah(j+\xi)\\ -(\alpha+\beta(j+\xi)^2)& \lambda-\i(j+\xi)\epm V \d \lambda
\end{align*}
Since for any $j\notin\{ 0,\pm 1\}$, the eigenvalues $\lambda_j(\xi)$  lie outside of $B_{\ep}(0)$ for sufficiently small $\ep,\xi$, it follows that $\Pi_{\xi}^0(e^{\i j x} V)=0$ for any $j\notin\{ 0,\pm 1\}$. Together with the definition of \eqref{def-extop-xi}, this yields \eqref{p1-extop} and implies
\begin{align}\label{project-id}
\Pi_{\xi}^0 (V)=V, \qquad \Pi_{\xi}^0(e^{\pm \i x}V)=e^{\pm \i x}\cM_{\pm}(\xi) V,
\end{align}
where 
\begin{align*}
    \cM_{\pm}(\xi)\coloneqq\f12 \bpm 1 &\pm  \i \,\omega_{\pm 1}^{-1} \\\mp\i \,\omega_{\pm 1} & 1\epm (\xi)\, .
\end{align*}
Differentiating \eqref{project-id} with respect to $\xi$ gives:
\begin{align*}
 (\p_{\xi}\Pi_{\xi}^0)V=0, \qquad (\p_{\xi}\Pi_{\xi}^0)(e^{\pm \i x}V)=e^{\pm \i x}  \cM_{\pm}'(\xi)V.
\end{align*}
Consequently, we obtain $\cU_{\xi}^0(V)=V$ and
\begin{align*}
[\p_{\xi}\Pi_{\xi}^0, \Pi_{\xi}^0](e^{\pm\i x} V)= [\cM_{\pm}'(\xi), \cM_{\pm}(\xi)] (V e^{\pm\i x})=
e^{\pm\i x} \f{\omega_{\pm 1}'}{2\,\omega_{\pm 1}}(\xi) \bpm -1 & 0\\0& 1 \epm V,
\end{align*}
this, together with the definition of $\cU^0({\xi})$ in \eqref{def-extop-xi}, yields that
\begin{align*}
    \cU^0({\xi}) (e^{\pm\i x} V)=e^{\pm\i x} \exp \bigg(\int_0^{\xi} \f{\omega_{\pm 1}'}{2\,\omega_{\pm 1}}(\xi) \bpm -1 & 0\\0& 1 \epm \bigg) V=e^{\pm\i x}  \bpm \sqrt{\f{\omega_{\pm1}(0)}{\omega_{\pm1}(\xi)}}  & 0\\ 0 & \sqrt{\f{\omega_{\pm1}(\xi)}{\omega_{\pm1}(0)}} \epm  V .
\end{align*}
\end{proof}

\section{Relation between the index function $\tilde{C}(\alpha,\beta)$ and those in \cite{A-Segur,D-R-packets}}\label{appen-indices} 

As we mentioned in the introduction,  a modulational criterion has been formally derived by 
Djordjevic \& Redekopp \cite{D-R-packets} and Ablowitz \& Segur \cite{A-Segur}, by approximating 
the water waves system by the following nonlinear Schr\"odinger equation:
\begin{align*}
    \i\,\pt u+\lambda\, \p_x^2 u=\nu |u|^2 u,
\end{align*}
where $\lambda, \nu$ are defined 
in (2.24b, 2.24h) of \cite{A-Segur}. If $\lambda \nu>0$ 
the above equation is defocusing and it yields stability while if $\lambda \nu<0$ the equation is focusing and it yields instability.
In this section, we perform additional computations to establish the relationship between the index function $\tilde{C}(\alpha,\beta)$ found in Theorem \ref{thm-modulation} and $\lambda \nu$ defined in \cite{A-Segur}. 

For any $(\alpha,\beta)\in \rm I\cup II\cup III$,  
let $\kpa>0$ 
satisfies the dispersion relation \eqref{disp-relation}.
 Denote 
$$\sigma=\tah(\kpa),\, \quad \tilde{T}=\f{\beta}{\alpha}\kpa^2\qquad   e_{*}=\f12\Big(1+\f{2\kpa}{\sh(2\kpa)}\Big)+\beta\kpa\tah(\kpa)
\, $$
and 
\begin{align}
 & \chi=\f{(1-\sigma^2)(9-\sigma^2)+\tT(3-\sigma^2)(7-\sigma^2)}{\sigma^2-\tT(3-\sigma^2)} +8\sigma^2-2(1-\sigma^2)^2(1+\tT)-\f{3\sigma^2 \tT}{1+\tT}\,, \label{def-chi}\\ 
  & \chi_1=1+\f{e_{*}}{2}(1-\sigma^2)(1+\tilde{T}),\qquad  \mu_1=\f{e_{*}^{2}}{\alpha}-1>0\,. \nonumber
\end{align}
Then it can be checked that $\lambda, \nu$ defined in  \cite[(2.24b), (2.24h)]{A-Segur} reads\footnote{Here we transform the coefficients after the normalization of the system. For instance, $C_g^2/(gh)$ for the original system corresponds to $e_{*}^2/\alpha$ here.}
\begin{align*}
\lambda=\frac{w_1''(0)}{2\sqrt{\alpha\kpa}}\,,\qquad\quad  
\nu=\nu(\alpha,\beta)=\sqrt{\f{\alpha}{16 \kpa}}\bigg(
\f{8}{\alpha(1+\tT)}\f{\chi_1^2}{\mu_1}+\chi\bigg),
\end{align*}
where $w_1=w_1(\xi)$ is defined in \eqref{def-w1''}.
From the dispersion relation \eqref{disp-relation} it follows that  $\alpha=\kpa/(\sigma (1+\tT))$. Therefore, 
\begin{align}\label{rewrite-nu}
  \nu=\sqrt{\f{\alpha}{16\kpa}} \bigg( \f{\tah(\kpa)}{\kpa}\f{8\alpha}{e_{*}^2-\alpha} \Big(1+\f{\kpa e_{*}}{\alpha \sh(2\kpa)}\Big)^2+\chi\bigg).    
\end{align}
On the other hand,  by \eqref{def-k2}, it holds that  
\begin{align*}
    k_2=\f{\kpa^3}{64\alpha  \sh(2\kpa) (e_{*}-1)}\tilde{\chi},
\end{align*}
with $\tilde{\chi}$ defined in \eqref{def-tchi}. It follows from straightforward but lengthy calculations that (see also (C.1) \cite{Mariana-Tien-Erik})
\beqs 
\tilde{\chi}=-\f{8\alpha\kpa \ch^4(\kpa)}{\alpha+\beta\kpa^2}\chi=-4\alpha \sh(2\kpa)\ch^2(\kpa)\chi\,.
\eeqs
Moreover, differentiating the dispersion relation 
$\alpha+\beta\kpa^2=\kpa/\tah(\kpa)$
with respect to  the wave speed $c$, we get 
\begin{align*}
    \p_c\alpha+\kpa^2 \p_c\beta+2\beta\kpa\p_c\kpa=\Big(\f{1}{\tah(\kpa)}-\f{\kpa}{\sh^2(\kpa)}\Big)\p_c\kpa,
\end{align*}
which implies
\begin{align*}
    c\p_c\kpa=-2\bigg(\f{1}{\tah(\kpa)}-\f{\kpa}{\sh^2(\kpa)}-2\beta\kpa\bigg)^{-1}(\alpha+\beta\kpa^2)=\f{\kpa}{e_{*}-1}.
\end{align*}
Therefore, we find that
\begin{align*}
    -\f{2\kpa \,k_2}{c \p_c \kpa}=\f{\kpa^3 \ch^2(\kpa)}{8}\chi,
\end{align*}
substituting which into  definition \eqref{def-index}, leads to 
\begin{align}\label{rewrite-C}
    C(\alpha,\beta)= \f{\alpha \kpa^2 \sh(2\kpa)}{\,2(e_{*}^2-\alpha)}\Big(1+\f{\kpa e_{*}}{\alpha \sh(2\kpa)}\Big)^2+\f{\kpa^3 \ch^2(\kpa)}{8}\chi\,.
\end{align}
Combining \eqref{rewrite-nu} and \eqref{rewrite-C}, we find easily that 
\begin{align}\label{relation-Cnu}
C(\alpha,\beta)=
\sqrt{\f{\kpa^7}{4\alpha}}
\ch^2(\kpa) \, \nu(\alpha,\beta)\,. 
\end{align}

\section{Brief discussion of the high-frequency stability when $(\alpha,\beta)\in \rm II$ }\label{app-regionII-highcrossing}

While Theorem \ref{thm-fullmodulation} gives the  criterion for the modulational stability when 
$(\alpha,\beta)\in \rm II$,  to get a global (in-)stability of periodic waves when $(\alpha,\beta)$ belongs to this regime, we need to examine the spectrum near other spectral crossings (of $L_{\xi}^0$) far away from $0$.  
Let $\kappa_1, \kappa_2 $ be two positive roots of the dispersion relation \eqref{disp-relation} with $\kpa_1<\kpa_2$. 

On the one hand, if $(\alpha,\beta)$ is such that any other spectral crossing arises from the same spectral branch, that is, there is no $j, j'\in \mathbb{Z}$, $\xi\neq 0$, such that $\lambda^{-}_j(\xi)=\lambda_{j'}^{+}(\xi)$, where 
$\lambda^{\pm}_j(\xi)$ is defined in \eqref{defspec-0}
with $\kappa$ replaced by $\kpa_1$ or $\kpa_2$, then it follows from the same arguments as in Section 3 that all the spectrum of $L^{\vep}$ near these spectral crossings lies on the imaginary axis, since the Krein signatures are always identical. 

On the other hand, if $(\alpha,\beta)$ is such that there exists a spectral crossing that arises from  different spectral curves\footnote{This indeed happens for many $(\alpha,\beta)$. We refer to cases (6,7,9,12,14,15) in \cite[pp. 44--50]{Hur-Yang-capillary} where $\lambda^{\pm}_j(\xi)= \lambda_{j'}^{+}(\xi)$ with $|j,j'|\leq 2, |j-j'|=1$ when $\kpa=\kpa_2$ and $|j-j'|=2$ for $\kpa=\kpa_1$. However, according to the analysis in Section 4 of \cite{Hur-Yang-capillary}, there exists a curve $\kappa=\kappa_0(\tilde{T})$,
which lies above the green curve (i.e., in the regime $\alpha>1$), and intersects the blue curve in Figure \ref{figure1}. All the relevant cases occur on the left-hand side of this curve $\kappa_0(\tilde{T})$. Consequently, the region enclosed by this curve together with the blue and green curves is also one in which the periodic waves are spectrally stable.}, then generically, 
$L^{\vep}$ would have unstable spectrum near these crossings. 
Let $j,j',\xi_0$ be such that $\lambda^{-}_j(\xi_0)=\lambda_{j'}^{+}(\xi_0)\coloneqq\i \sigma_0$. To study the spectrum of $L_{\xi}^{\vep}$ for $|(\xi-\xi_0,\vep)|$ small enough, the strategy is still to find a two dimensional representation matrix $D_{\xi}^{\vep}$ whose eigenvalues coincide with the spectrum of $L_{\xi}^{\vep}$ near $\i \sigma_0$. The matter reduces to finding the basis and dual basis.
As in Section 3, we start from the basis of $L_{\xi}^0$
 \begin{align*}
  q_1^0(\xi,x)=  e^{\i j'  x} \begin{pmatrix}1\\\,\i\,\omega_{j'}(\xi)\end{pmatrix} , \qquad  q_2^0(\xi,x)=  e^{\i j  x} \begin{pmatrix}1\\\,-\i\,\omega_{j}(\xi)\end{pmatrix} ,
\end{align*}
and then obtain a basis of eigenspaces of $L_{\xi}^{\vep}$ associated to the spectrum of $L_{\xi}^{\vep}$ near $\i \sigma_0$ by 
making use of the Kato expansion operator $\cU_{\xi}(\vep)$: 
\begin{align*}
    q_j^{\vep}(\xi,x)= \cU_{\xi}(\vep)  q_j^{0}(\xi,x).
\end{align*}
The dual basis is then found through 
\begin{align*}
   \tilde{q}_1^{\vep}(\xi,\cdot)&=-\f{\i\,Jq_{1}^{\vep}(\xi, \cdot)}{4\pi \omega_{j'}(\xi)}\,,&
\tilde{q}_{2}^{\vep}(\xi, \cdot)=\f{\i\,Jq_{2}^{\vep}(\xi, \cdot)}{4\pi\omega_{j}(\xi)}.
\end{align*}
With these in hand, we then find the representation matrix: 
\begin{align*}
   D_{\xi}^{\vep}&=
   \f{\i}{4\pi}\left( \begin{array}{cc}
\f{b_1(\xi,\vep)}{\omega_{j'}(\xi)} & \f{a(\xi,\vep)}{\omega_{j'}(\xi)}\\[5pt]
 -\f{\overline{a(\xi, \vep)}}{{\omega_{j}(\xi)}} & -\f{b_2(\xi, \vep)}{{\omega_{j}(\xi)}}
    \end{array}\right), 
\end{align*}
where $ a(\xi,\vep)=\langle {q}_{1}^{\vep}(\xi, \cdot), A_{\xi}^{\vep}  {q}_{2}^{\vep}(\xi, \cdot)\rangle,\,\,\, b_k(\xi,\vep)=\langle {q}_{k}^{\vep}(\xi, \cdot), A_{\xi}^{\vep}  {q}_{k}^{\vep}(\xi, \cdot)\rangle, k=1,2$.
The eigenvalues of $D_{\xi}^{\vep}$ reads 
\begin{align*}
    \lambda_{\pm}=\f{\i}{8\pi} \bigg( \f{b_1(\xi,\vep)}{\omega_{j'}(\xi)}-\f{b_2(\xi,\vep)}{\omega_{j}(\xi)} \pm \sqrt{\Delta} \bigg)
\end{align*}
with 
\begin{align*}
    \Delta=\f{4|a|^2(\xi,\vep)}{\, \omega_{j'}\omega_{j}(\xi)}-\bigg(\f{b_1(\xi,\vep)}{\omega_{j'}(\xi)}+\f{b_2(\xi,\vep)}{\omega_{j}(\xi)}  \bigg)^2\,.
\end{align*}
Since 
\begin{align*}
\f{b_1(\xi_0,0)}{\omega_{j'}(\xi_0)}+\f{b_2(\xi_0,0)}{\omega_{j}(\xi_0)}=0, \qquad \p_{\xi}\bigg(\f{b_1(\xi_0,0)}{\omega_{j'}(\xi_0)}+\f{b_2(\xi_0,0)}{\omega_{j}(\xi_0)}\bigg)=0,
\end{align*}
it follows from the implicit function theorem that there exists a $C^1$ curve $\xi=\mathrm{\Xi}(\vep)$ such that 
\begin{align*}
    \f{b_1(\mathrm{\Xi}(\vep),\vep)}{\omega_{j'}(\mathrm{\Xi}(\vep))}+\f{b_2(\mathrm{\Xi}(\vep),\vep)}{\omega_{j}(\mathrm{\Xi}(\vep))}\equiv 0.
\end{align*}
  Consequently, the instability of $L^{\vep}$ around $\i \sigma_0$ reduces to the non-vanishing of the parameter $a(\mathrm{\Xi}(\vep),\vep)$. 
Straightforward computations show that 
  \begin{align*}
      a(\mathrm{\Xi}(\vep),\vep)=\Gamma(\kpa) \vep^{|j-j'|}+\cO(\vep^{|j-j'|+1}).
  \end{align*}
Since $\Gamma(\kpa)$ is a real analytic function in  $\kpa$, for a given $\kpa$ it is generically non-vanishing.
If this is the case, we find unstable spectrum of $L^{\vep}$ whose positive real part is of order $\cO(|\vep|^{|j-j'|})$. Let us remark that such a procedure for finding unstable spectrum around a spectral crossing with opposite Krein signatures has been employed in \cite{NRS-EP,JRSY-cmp}. \\[3pt]
\qquad 

\textbf{Data Availability Statement:}
Some data are generated from numerical simulations used to produce Figures  \ref{figure1}, \ref{fig:figure2} and \ref{fig:figure3}.
The MATLAB scripts used for data generation are available from the corresponding author upon reasonable request.

\textbf{Conflicts of Interest Statement:}
The authors declare that they have no conflicts of interest.

\section*{Acknowledgements}
C. Sun is partly supported by the ANR project ANR-24-CE40-3260, and E. Wahlén is supported by the Swedish Research Council (grant no. 2020-00440). Part of this work was carried out during C. Sun’s visits to Lund University, whose warm hospitality is gratefully appreciated. The authors also sincerely thank Mariana Haragus and Zhao Yang for their valuable comments and suggestions. 
\bibliographystyle{siam}
\bibliography{pwwref}
\end{document}